\numberwithin{equation}{section} 
\newtheorem{theorem}{Theorem}[section]
\newtheorem{lemma}[theorem]{Lemma}
\newtheorem{proposition}[theorem]{Proposition}
\newtheorem{corollary}[theorem]{Corollary}
\theoremstyle{definition}
\newtheorem{example}[theorem]{Example}
\newtheorem{definition}[theorem]{Definition}
\newtheorem{remark}[theorem]{Remark}
\newcommand{\Emc}{{\mathcal{E}}}
\def\E{{\mathbb E}}
\def\R{{\mathbb R}}
\def\PP{{\mathbb P}}
\def\P{{\mathcal P}}
\def\M{{\mathcal M}}
\def\F{{\mathcal F}}
\newcommand{\te}{\text}
\newcommand{\defeq}{:=}
\newcommand{\wsp}{\mathcal{W}}
\newcommand{\mpt}{\mathcal{F}}
\newcommand{\lyr}[1]{^{(#1)}}
\newcommand{\intW}{L}
\title{ Multiplexons: Limits of Multiplex Networks }
\author{Ankan Ganguly\thanks{Department Mathematics and Statistics, Boston University, \texttt{ankang@bu.edu}} \and Bhaswar B. Bhattacharya\thanks{Department of Statistics and Data Science, University of Pennsylvania, \texttt{bhaswar@wharton.upenn.edu}} }
\date{}
\begin{document}

\maketitle
\begin{abstract}  
In a multiplex network, a set of nodes is connected by different types of interactions, each represented as a separate layer within the network. Multiplexes have emerged as a key instrument for modeling large-scale complex systems, due to the widespread coexistence of diverse interactions in social, industrial, and biological domains. This motivates the development of a rigorous and readily applicable framework for studying  properties of large multiplex networks. In this article, we provide a self-contained introduction to the limit theory of dense multiplex networks, analogous to the theory of graphons (limit theory of dense graphs). As applications, we derive limiting analogues of commonly used multiplex features, such as degree distributions and clustering coefficients. We also present a range of illustrative examples, including correlated versions of Erd\H{o}s-R\'enyi and inhomogeneous random graph models and dynamic networks. Finally, we discuss how multiplex networks fit within the broader framework of decorated graphs, and how the convergence results can be recovered from the limit theory of decorated graphs. Several future directions are outlined for further developing the multiplex limit theory. 
\end{abstract}

%\tableofcontents

\section{ Introduction }
\label{intro}

Network data arises across nearly all areas of science and technology, and network science has evolved into a highly interdisciplinary field that integrates ideas from physics, mathematics, statistics, biology, engineering, and computer science. Complex systems, however, are rarely formed by single, isolated networks. They often comprise multiple interacting and interdependent networks, each capturing different types of relational information. This has led to the development of multilayer networks, where nodes are connected through different types of interactions, organized into multiple layers. With applications appearing across a wide range of scientific disciplines, the study of multilayer networks has grown rapidly in recent years (see the monographs \cite{bianconi2018multilayer,networksbiology,dickison2016multilayer,multiplexnetworkproperties} and the review articles \cite{kivela2014multilayer,battiston2017new,lee2015towards,de2023more} for detailed expositions on this emerging field).  
%The ubiquitous presence of complex relational data with many interdependencies has propelled the rapidly developing literature on multiplex networks. 

An important special case of multilayer networks is {\it multiplex networks}, where a common set of nodes is connected by edges representing different types of interactions. The literature on such networks is extensive and continually expanding. The following are a few popular examples: 

\begin{itemize} 

\item {\it Social Networks}: The concept of multiplicity was originally proposed in the context of social networks to model different types of social interactions \cite{wasserman1994social,fienberg1985statistical,pattison1999logit}. Here, multiplicity (the different layers) can correspond to types of relationship (such as friendship, collaboration, or family ties), modes of communication (such as email, phone, or message), among others. A historical example in this direction is the network between Florentine families during the fifteenth century \cite{florentinegraph}. In this example, the nodes (which correspond to influential families in Florence during the fifteenth century) are connected based on marriage (layer 1) and business (layer 2) alliances, resulting in a multiplex network with two layers (see Figure \ref{fig:h}). In recent years, multiplex networks have emerged as an important modeling paradigm for social systems, driven by the proliferation of large-scale online social network data, where the coexistence of multiple types of interactions is a recurring theme (see \cite[Section 4.3.1]{bianconi2018multilayer} and the references therein).
\begin{figure}
    \centering
      \vspace{-0.35in}
    \includegraphics[scale=0.5]{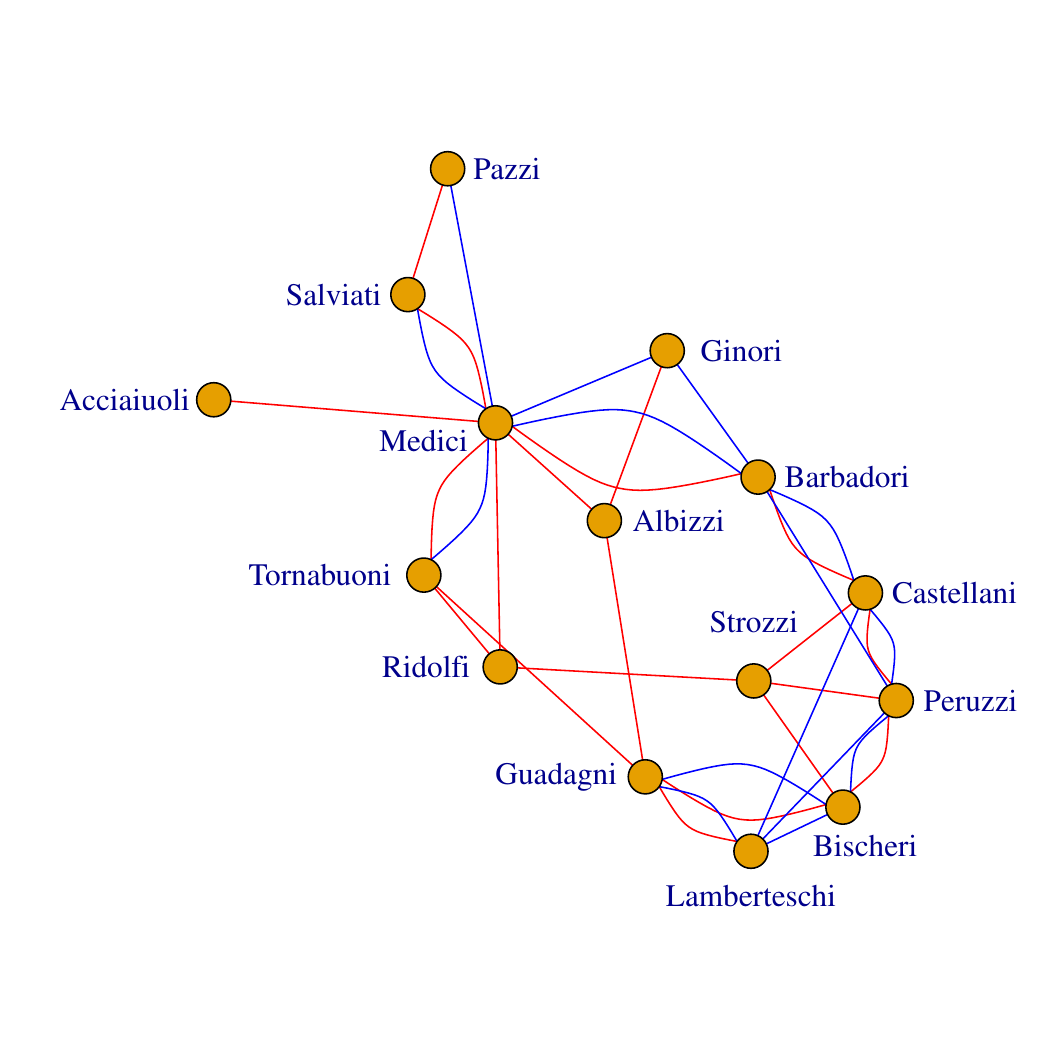}
    \vspace{-0.35in}
    \caption{\small{ Marriage and business alliances between Florentine families of the fifteenth century, shown with red and blue edges, respectively. The data is from \cite{florentinegraph} (see also \cite[Figure 4.2]{bianconi2018multilayer}). }} 
    \label{fig:h}
\end{figure} 

\item {\it Transportation networks}: These are multilayered networked systems, where the layers represent different modes of transportation (also known as multi-mode graphs \cite{kirkpatrick2025shortest}). For example, the transportation network of a major city can be modeled as a multiplex network, where nodes correspond to city locations that are connected across different layers based on modes of transportation, such as bus, metro, or train connections. A prominent dataset in this direction is the European Multiplex Air Transportation Network \cite{cardillo2013emergence}. This is the network of flight connections between European airports, where the layers correspond to flights from different airline companies. Other similar examples are studied in \cite{strano2015multiplex,bergermann2021multiplex,multiplexindex}.

\item {\it Biological networks}: The framework of multilayer networks is also very useful for modeling biological interactions between components of a living system. The advent of network medicine \cite{barabasi2011network} has shown that, to develop a holistic understanding of an organism, it is essential to model the different types of biological interactions together. A particular example that has been extensively studied recently is the connectome (neural connections of the nervous system) of the worm {\it C.~elegans}. The brain of {\it C.~elegans} consists of 302 neurons, which are connected by two types of links: synaptic (chemical) connections and gap junctions (electrical connections), resulting in a multiplex network with two layers (see \cite{bentley2016multilayer} and \cite[Section 4.7]{bianconi2018multilayer} for more details).  

\end{itemize}
Other examples where multiplex networks commonly appear include citation networks, trade and finance multiplex networks, and ecological networks (see \cite[Chapter 4]{bianconi2018multilayer} for relevant references and additional examples).

Various probabilistic models and methods of estimation for multiplex networks have been proposed in recent years. One of the basic models of a random multiplex, which initially appeared in a different guise in the context of network de-anonymization \cite{pedarsani2011privacy}, is the correlated Erd\H{o}s-R\'enyi model. In this model, each layer marginally follows the classical (single-layer) Erd\H{o}s-R\'enyi model, while the edges across the layers are correlated (see Example \ref{example:random12} and Remark \ref{remark:Gnpparameters} for the formal definition). Recently, this has been adopted as the canonical model in random graph matching and related problems (see \cite{mao2024testing,mao2023random,lyzinski2015graph,lyzinski2014seeded} and among several others). Going beyond the basic model, one can consider latent space multiplex models, which extend graphon-based exchangeable random graph models \cite{bickel2011method,bickel2009nonparametric,Lov12,graphlimitsexhangeable2007} to the multiplex setting (see \cite{chandna2022edge,arroyo2021inference,lunagomez2021modeling,macdonald2022latent,skeja2024quantifying,dufour2024inference} and the references therein). As exchangeable models for multiplex networks gain prominence, a natural question emerges: {\it Is it possible to develop a limit theory for multiplex networks that parallels the convergence of dense graphs to graphons?}

In this paper, we provide a self-contained introduction to the limit theory for sequences of dense multiplex networks. To describe the results, we begin by formally defining a multiplex network. 

\begin{definition}\label{defn:G}
For an integer $r \geq 1$, we denote an $r$-multiplex by $\bm{G} = (V(\bm{G}), G_1, G_2, \ldots, G_r)$, where $V(\bm{G})$ is the common vertex set and  $G_s = (V(\bm G), E(G_s))$ is a simple graph with vertex $V(\bm G)$ and edge set $E(G_s)$, for $1 \leq s \leq r$.  Often, slightly abusing notation, we will write $\bm G = (G_1, G_2, \ldots, G_r)$ whenever the common vertex set is clear from the context.  
\end{definition}

Multiplex networks fall within the purview of the more general framework of decorated graphs~\cite{lovasz2010limits} (see also \cite[Chapter 17]{Lov12}), where the edges carry additional information or `decorations'. Specifically, given a set $\bm{Z}$, a $\bm{Z}$-decoration of a simple graph $F= (V(F), E(F))$ is a map $g: E(F) \rightarrow \bm{Z}$. This framework extends traditional binary graphs (where $ \bm{Z} = \{0, 1\}$) to include additional structures such as colored graphs, weighted graphs, and, in particular, multiplex networks.
%This general setting includes many common extensions of traditional binary graphs, such as colored graphs, weighted graphs, and, in particular, multiplex networks. 
To represent an $r$-multiplex $\bm{G} = (V(\bm{G}), G_1, G_2, \ldots, G_r)$ as a decorated graph, suppose $\bm{Z}$ is the power set (the collection of all subsets) of $[r]=\{1, 2, \ldots, r\}$ and consider the $\bm{Z}$-decoration of the complete graph with vertex set $V(\bm{G})$ as follows: For $\{i, j\} \in V(\bm{G})$, define $$g((i, j)) = \{ s \in [r]: (i, j) \in E(G_s)\},$$ 
which is the set of layers that contains the edge $(i, j)$. Lov\'asz and Szegedy \cite{lovasz2010limits} developed the left-convergence theory (convergence in terms of homomorphism densities) for decorated graphs, and metric properties of this convergence when $\bm{Z}$ is a finite set were studied in \citet{falgas2016multicolour}. Recently, \citet{abraham2025probability} (see also \citet{zucal2024probabilitygraphons}) introduced the notion of probability graphons, a general framework in which the decorations can take values in an arbitrary Polish space (see Section \ref{sec:probabilitygraphons} for details). Taken together, these results provide a limit theory for decorated graphs, and hence that of multiplex networks, that parallels the convergence of dense graphs to graphons. However, in certain cases, it may not be easy to extract the desired form of a result in the multiplex setting directly from the general theory. Nonetheless, the growing popularity of multiplex networks makes it increasingly important to develop a framework for studying their asymptotic properties that is readily accessible. In this article, we distill the key ideas regarding the convergence of multiplex networks from the general theory of decorated and probability graphons, and compile a comprehensive set of results on multiplex convergence. We complement this with some new results and various examples that highlight the distinctive structural features of multiplex networks. The following is a summary of the results:

\begin{itemize} 

\item {\it Left convergence and decomposition of homomorphism densities}: The notion of left-convergence of a sequence of multiplexes can be naturally defined in terms of convergence of multiplex homomorphism densities (Definition \ref{defn:multigraphlimit}). The more delicate issue is to identify the limiting object. Although an $r$-multiplex has $r$-layers, each of which can be marginally represented by an empirical graphon, to capture the interlayer dependencies in the limit, one needs $2^r-1$ layers (one for each non-empty subset of $[r]$). This is due to the fact that $2^r-1$ parameters are required to characterize an $r$-dimensional Bernoulli distribution \cite{teugels1990multivariatebernoulli,chandna2022edge} (see also Example \ref{example:random12}). 
%For example, to characterize the joint distribution of a bivariate Bernoulli vector, 3 parameters are required. 
Hence, to describe the limit of a sequence of $r$-multiplexes, one needs a vector of graphons of length $2^r-1$. We refer to such an object as a {\it multiplexon}.\footnote{A vector of graphons has been referred to by different names in the literature. In the context of multigraph limits \cite{rath2012multigraph} and dynamic graphs \cite{rollin2023dense}, it is called a {multigraphon}. It also appears as a special case of decorated graphons or probability-graphons (see Section \ref{sec:probabilitygraphons}). Here, we introduce the term multiplexon to highlight some of the unique features that arise in the multiplex setting.} 
We also discuss two different ways of decomposing a multiplex/multiplexon, based on their layer structure. These decompositions lead to convenient representations of the homomorphism densities (Proposition \ref{proposition:gtow}) that enable us to identify an appropriate limiting space of multiplexons.

\item {\it Compactness}: A fundamental result in graph limit theory is that the space of graphons is compact under the cut distance, after identifying graphons that have zero cut distance. In Section \ref{sec:compactness}, the analogous results for the multiplexons are established. Specifically, the space of multiplexons equipped with a natural notion of cut distance is compact, after a similar identification.

\item {\it Sampling convergence}: A multiplexon can be used to generate an inhomogeneous random multiplex, which, in particular, includes the latent space multiplex models mentioned above. In Section \ref{sec:randommultiplex}, it is shown that the sampled random multiplex converges to the corresponding multiplexon, both in terms of homomorphism densities and in the cut distance.

\item {\it Counting lemmas}: Similar to graphons, multiplexons also satisfy a counting lemma and an inverse counting lemma (Section \ref{sec:tHW}). Specifically, if two multiplexons are close in the cut distance, then their homomorphism densities are close (counting lemma). Conversely, if two multiplexons are close in terms of their homomorphism densities, then they are also close in terms of their cut distance (inverse counting lemma). 

\item {\it Equivalence of convergence}: The above results combined imply that left-convergence of multiplexes and convergence in terms of the cut distance are equivalent (Section \ref{left}).

\end{itemize}

The above results collectively establish a limit theory for multiplex networks that parallels the theory of graph limits. This theory can be invoked to derive limits of various network features for any sequence of converging multiplexes. As an illustration, in Section \ref{sec:degreeclusteringcoefficient}, we compute the limiting counterparts of different notions of degrees and clustering coefficients in multiplex networks. In Section \ref{sec:examples}, we present several examples of multiplex networks and compute their corresponding multiplexon limits. This includes the Correlated Stochastic Block Model (CSBM), multiplex analogues of threshold graphs and uniform attachment graphs, and dynamic graphs. In Section \ref{sec:probabilitygraphons}, we introduce the notions of decorated graphons and probability graphons, and specifically highlight how the aforementioned results can also be obtained from the general theory. %This section also includes a discussion on weak isomorphism of multiplexons. 
Several natural follow-up questions and directions for future research are listed in Section \ref{sec:directions}. We begin by reviewing the essential concepts from graph limit theory in Section \ref{sec:W}. 

\section{Fundamentals of Graph Limit Theory} 
\label{sec:W}

The theory of graph limits provides an analytic framework for studying properties of large graphs. It provides a convenient mathematical language for analyzing a wide range of problems in combinatorics, statistical physics, network science, probability, and statistics. In this section, we will review the basic notions about the convergence of graph sequences. For a comprehensive treatment of the subject, see the monograph by Lov\'asz \cite{Lov12}. 

Given two simple graphs $H = (V(H), E(H))$ and $G = (V(G), E(G))$, denote by $\mathrm{hom}(H, G)$ the set of all homomorphisms of $H$ into $G$. More formally, 
\[\mathrm{hom}(H, G) := \{\phi: V(H) \to V(G):  (\phi(u),\phi(v)) \in E(G) \text{ whenever } (u,v) \in E(H) \} , \]
The {\it homomorphism density} of $H$ into $G$ is defined as: 
$$t(H, G) :=\frac{|\mathrm{hom}(H,G)|}{|V (G)|^{|V (H)|}} . $$
%where  $|\hom(F,G)|$ denotes the number of homomorphisms of $F$ into $G$. 
A related quantity of interest is the  {\it injective homomorphism density} of $H$ into $G$: 
\begin{align}\label{eq:injectivetHG}
t_{\text{inj}}(H, G) :=\frac{|\te{injhom}(H,G)|}{|V (G)| (|V(G)| - 1) \ldots (|V(G)| - |V(H)| + 1)} ,  
\end{align} 
 where $\te{injhom}(H,G) := \{ \phi \in \mathrm{hom}(H,G) : \phi \text{ is injective} \}$ is the collection of injective homomorphisms from $H$ to $G$. Note that $t(H, G)$ (respectively, $t_{\text{inj}}(H, G)$) is the probability that a random mapping $\phi: V (H) \rightarrow V (G)$ defines a graph homomorphism (respectively, an injective homomorphism).

A fundamental concept in graph limit theory is left-convergence, which is defined in terms of the convergence of homomorphism densities. 

\begin{definition}\label{defn:graphlimit}\cite{LovSze06,Lov12} A sequence of graphs $\{G_n\}_{n\geq 1}$ is said to be {\it left-convergent} if for every simple graph $H$, $\lim_{n\rightarrow \infty}t(H, G_n)$ exists. 
\end{definition}

%The basic definition  is that a sequence $G_n$ of graphs converges if $t(F, G_n)$ converges for every graph $F$.% as $|V(G_n)|\rightarrow \infty$. 

To identify the limits of homomorphism densities, consider the space $\wsp$ of all symmetric measurable functions $W: [0, 1]^2 \rightarrow [0, 1]$. (Here, symmetric means $W(x, y) = W(y,x)$, for all $x, y \in [0, 1]$.) The elements of $\wsp$ are called {\it graphons}. For a simple graph $H$ with $V (H)= \{1, 2, \ldots, |V(H)|\}$, define the homomorphism density of $H$ into a graphon $W \in \wsp$ as: 
\begin{align*} 
t(H,W) =\int_{[0,1]^{|V(H)|}}\prod_{(i,j)\in E(H)}W(x_i,x_j) \mathrm dx_1\mathrm dx_2\cdots \mathrm dx_{|V(H)|}.
\end{align*}  
%Here E(H) denotes the edge set of H. 
A key result in graph limit theory is that if a sequence of graphs $\{G_n\}_{n\geq 1}$ is left-convergent, then there exists a $W \in \wsp$ such that $\lim_{n\rightarrow \infty}t(H, G_n) = t(H, W)$ (see \cite[Section 11.3.1]{Lov12}). 
In this case, we say $\{G_n\}_{n\geq 1}$ {\it left-converges} to the graphon $W$. 
%The limit objects, that is, the elements of $\wsp$, are called {\it graph limits} or {\it graphons}. 
Note that finite graphs can be represented as a graphon in a natural way as follows: Given a finite graph $G = (V(G), E(G))$, define the {\it empirical graphon} corresponding to $G$ as: 
$$W^G(x, y) =\boldsymbol 1\{(\lceil |V(G)| x \rceil, \lceil |V(G)| y\rceil)\in E(G)\}.$$
In other words, we partition $[0, 1]^2$ into $|V(G)|^2$ squares of side length $1/|V(G)|$, and let $W^G(x, y)=1$ in the $(i, j)$-th square if $(i, j)\in E(G)$, and 0 otherwise. Observe that $t(H, W^{G}) = t(H,G)$ for every simple
graph $H$. 
%and therefore the constant sequence $G$ converges to the graph limit $W^G$. 

To obtain a metric that captures the notion of left-convergence, define the {\it cut norm} of a graphon $W\in\wsp$ as:
\begin{align}\label{eq:fgST}
\|W \|_{\square} :=\sup_{S,T\subset [0,1]}\left|\int_{S\times T} W(x,y) \mathrm dx\mathrm dy\right| = \sup_{ f, g: [0, 1] \rightarrow [0, 1] } \left| \int_{[0,1]^{2}} W(x,y) f(x) g(y) \,\mathrm dx\,\mathrm dy \right| ,   
\end{align} 
where the second equality follows from \cite[Lemma 8.10]{Lov12}. The cut norm is equivalent to the more traditional $L_\infty \rightarrow L_1$ operator norm: 
\begin{equation}
\label{sqinfmet}
\| W \|_{\infty \rightarrow 1} = \sup_{ \| f \|_{\infty}, \| g \|_{\infty} \leq 1} \int_{[0,1]^{2}} W(x,y) f(x) g(y) \,\mathrm dx\,\mathrm dy.
\end{equation}
In particular, \cite[Lemma 8.11]{Lov12} shows, $\| W \|_{\square} \leq \| W \|_{\infty \rightarrow 1}  \leq 4  \| W \|_{\square}$. 
To obtain a metric that is invariant to labeling, one defines the  {\it cut distance} between two graphons $U, W \in \wsp$ as follows: 
\begin{align*}%\label{eq:deltagraphon}
\delta_\square(U, W):=\inf_{\sigma \in \mpt} \| U - W^{\sigma}\|_{\square} , 
\end{align*}  
where $\mpt$ is the set of measure-preserving bijections from $[0,1]$ to $[0,1]$ and $W^{\sigma}(x, y):=W(\sigma(x), \sigma(y))$, for $\sigma \in \mpt$. To obtain a metric space, define an equivalence relation on $\wsp$ as follows: $U\sim W$ when $\delta_\square(U, W) = 0$. Equivalently, $U \sim W$, if there exists measure preserving functions $\sigma_1, \sigma_2 :[0,1]\mapsto [0,1] $ such that $U^{\sigma_1}=W^{\sigma_2}$ (see \cite[Section 10.7]{Lov12}). Denote by ${\wsp}_{\square}$ the space of equivalence classes of graphons with respect to $\sim$.\footnote{The standard notation in the literature for the graphon space, obtained by identifying graphons with cut distance zero, is $\tilde{\wsp}$. Here, we adopt a slightly non-standard notation to avoid confusion with subsequent notations used for various decompositions of a multiplexon.} This is a metric space under the cut distance $\delta_\square$. A fundamental result in graph limit theory is that the space ${\wsp}_{\square}$ is compact (see \cite[Section 9.3]{Lov12}). Further, left-convergence of graphs and convergence in terms of the cut distance are equivalent. Specifically, a sequence of graphs $\{G_n\}_{n\geq 1}$ is left-convergent if and only if it is a Cauchy sequence in the $\delta_\square$ metric. Moreover, a sequence of graphs $\{G_n\}_{n\geq 1}$ is left-converges to a graphon $W$ if and only if 
$\delta_\square(W^{G_n}, W)\rightarrow 0$ (see \cite[Section 11.3.1]{Lov12}). 
%More generally, a sequence $\{\tilde W_n\}_{n\geq 1}$ converges to $\tilde W\in \tilde \wsp$ if and only if $\delta_\square(\tilde W_n, \tilde W)\rightarrow 0$. 

\section{Multiplex Networks and Multiplexons }

In this section, we introduce the notion of homomorphism densities for multiplex networks and the concept of cut distance for multiplexons. Next, we discuss left-convergence, present two different ways to decompose a multiplex/multiplexon, and define the empirical multiplexon. Finally, we show how decomposing the source and target multiplexes differently leads to a convenient representation of the homomorphism density.

\subsection{ Homomorphism Densities } 
\label{sec:multiplexGH}

The notion of homomorphism densities extends to the multiplex setting in a natural way.  

\begin{definition}\label{definition:homomorphismGH}
For two  $r$-multiplexes 
$$\bm H = (V(\bm{H}), H_1, H_2, \ldots, H_r) \text{ and } \bm G = (V(\bm{G}), G_1, G_2, \ldots, G_r),$$ a map $\phi: V(\bm{H}) \to V(\bm{G})$ is said to be a homomorphism, if $\phi$ is a homomorphism from $H_s$ to $G_s$, for each $1 \leq s\leq r$. The {\it homomorphism density} from $\bm H$ to $\bm G$ is defined as: 
\[t(\bm{H},\bm{G}) = \frac{|\mathrm{hom}(\bm{H},\bm{G})|}{|V(\bm{G})|^{|V(\bm{H})|}}, \] 
where $\mathrm{hom}(\bm{H},\bm{G}) := \bigcap_{s =1}^r \mathrm{hom}(H_s,G_s)$ 
is the collection of all homomorphisms from $\bm{H}$ to $\bm{G}$. Also, similar to \eqref{eq:injectivetHG}, we define {\it injective homomorphism density} of $\bm{H}$ into $\bm{G}$ as: 
\begin{align}\label{eq:injectivetHGlayers}
t_{\text{inj}}(\bm{H}, \bm{G}) :=\frac{|\te{injhom}(\bm{H}, \bm{G})|}{|V (\bm{G})| (|V(\bm{G})| - 1) \ldots (|V(\bm{G})| - |V(\bm{H})| + 1)} , 
\end{align} 
 where $\te{injhom}(H,G) := \bigcap_{s =1}^r \te{injhom}(H_s,G_s)$ is the collection of injective homomorphisms from $\bm{H}$ to $\bm{G}$. 
\end{definition} 

\begin{remark} As in the case of graphs, $t(\bm{H},\bm{G})$ (respectively, $t_{\text{inj}}(\bm{H}, \bm{G})$) can be interpreted as the probability that a random mapping from $\phi: V(\bm{H}) \to V(\bm{G})$ defines a homomorphism (respectively, an injective homomorphism). Note that a map $\phi: V(\bm{H}) \to V(\bm{G})$ is non-injective if there is a pair of vertices in $V(\bm{H})$ which are mapped to the same vertex in $V(\bm{G})$ under $\phi$. Since the probability of such a collision is $\frac{1}{ |V(\bm{G})| }$, we have 
\begin{align}\label{eq:tHGnoninjective}
|t(\bm{H},\bm{G}) - t_{\text{inj}}(\bm{H}, \bm{G})| \leq \frac{1}{|V(\bm{G})| }\binom{|V(\bm{H})|}{2}. 
\end{align} 
When $\bm{H}$ is a fixed multiplex, the RHS above tends to zero as $|V(\bm{G})| \rightarrow \infty$. This implies that all but a negligible fraction of homomorphisms correspond to injective homomorphisms. 
\end{remark}

To define the continuum analogue of $r$-multiplexes, denote by $\wsp^{r}$ the Cartesian product of $r$ copies of $\wsp$. The elements of $\wsp^{r}$ will be referred to as \emph{$r$-multiplexons}. Given an $r$-multiplexon $\bm W = (W_1, W_2, \ldots, W_r) \in \wsp^{r}$, where $W_s \in \wsp$ is a graphon, for each $1 \leq s \leq r$, and a finite $r$-multiplex 
$\bm H = (V(\bm{H}), H_1, H_2, \ldots, H_r)$, the homomorphism density of $\bm H$ in $\bm W$ is defined as: 
\begin{align}\label{eq:tHW} 
t(\bm{H},\bm{W}) = \int_{[0,1]^{|V(\bm H)|}} \prod_{s =1}^{r} \prod_{(i,j) \in E(H_s)} W_s(x_i,x_j)\, \mathrm dx_1\, \mathrm dx_2\,\dots\, \mathrm dx_{|V(\bm H)|}. 
\end{align}

\subsection{ Cut Distance } 
\label{sec:distancelayers}

The cut norm \eqref{eq:fgST} extends to $\wsp^{r}$ as follows: For $\bm W = (W_1, W_2, \ldots, W_r) \in \wsp^{r}$, 
\begin{align}\label{eq:UWsquarelayer}
\| \bm W \|_{\square} \defeq \sum_{s = 1 }^r \| W_s \|_\square. 
%\te{ and } d^{r}_{\square} \defeq d^{[r]}_{\square} 
\end{align}
The  {\it cut distance} between two multiplexons $\bm U, \bm W \in \wsp^{r}$ is defined as: 
\begin{align}\label{eq:UWdeltalayer}
\delta_\square(\bm U, \bm W) := \inf_{\sigma \in \mpt} \| \bm U - \bm W^{\sigma} \| _\square , \end{align} 
where $\mpt$ is the set of all measure preserving bijections from $[0,1]\mapsto [0,1]$ and $\bm W^{\sigma} = (W_1^\sigma, W_2^\sigma, $ $\ldots, W_r^\sigma)$, for $\sigma \in \mpt$. Then to obtain a metric space, define an equivalence relation on $\wsp^{r}$ as follows: $\bm U \sim \bm W$ when $\delta_\square(\bm U, \bm W) = 0$ (see Section \ref{sec:deltaUW} for equivalent conditions). Denote by $({\wsp}^r)_{\square}$ the space of equivalence classes of $r$-multiplexons with respect to $\sim$. This is a metric space under the cut distance $\delta_\square$ defined in \eqref{eq:UWdeltalayer}.

\begin{remark} Note that the space $({\wsp}^{r})_\square$ defined above is different from the space $(\wsp_{\square})^{r}$. The latter is the Cartesian product of $r$ copies of ${\wsp}_{\square}$, where one is allowed to transform each of the coordinate graphons with different measure-preserving transformations. This space is not appropriate for capturing limits of multiplexes. This is because the different layers of a multiplex network have the same set of nodes; hence, one needs to apply a single measure-preserving transformation to all the coordinates to preserve the invariance of the homomorphism densities. 
\end{remark}

\subsection{ Left-Convergence } 

 Drawing parallels from Definition \ref{defn:graphlimit}, the natural way to define left-convergence for a sequence of multiplexes is as follows: 

\begin{definition}\label{defn:multigraphlimit} A sequence of $r$-multiples $\{\bm G_n\}_{n\geq 1}$ is said to be {\it left-convergent} if for every $r$-multiplex $\bm H$, $\lim_{n\rightarrow \infty}t( \bm H, \bm G_n)$ exists. 
\end{definition}

While this extension from graphs to multiplexes is natural, the challenge is in identifying the limiting multiplexon. The natural initial guess would be to consider the space of $r$-multiplexons $\wsp^r$ defined in Section \ref{sec:multiplexGH}. To understand why this space is not rich enough, consider the following example:

\begin{example}[Correlated random graphs]\label{example:random12} The correlated Erd\H{o}s-R\'enyi 2-multiplex $$\bm G_n = ([n], (G_n)_{1}, (G_n)_{2})$$ is a random multiplex with vertex set $[n] = \{ 1, 2, \ldots, n \}$ and two layers $(G_n)_{1}$ and $(G_n)_{2}$ such that, independently for $1 \leq i < j \leq n$, 
$$\mathbb{P}((i, j) \in E((G_n)_{1})) = p_1, ~\mathbb{P}((i, j) \in E((G_n)_{2})) = p_2, \text{ and } \mathbb{P}((i, j) \in E((G_n)_{1}) \cap E((G_n)_{2})) = p_{12}, $$
where $(p_1, p_2, p_{12}) \in [0, 1]^3$ such that $p_1 + p_2 - 1 \leq p_{12} \leq \min\{p_1, p_2\}$. 
In other words, in each layer the graphs $(G_n)_{1}$ and $(G_n)_{2}$ follow Erd\H{o}s-R\'enyi random graphs $G(n, p_1)$ and $G(n, p_2)$, respectively, which are correlated across the layers through the joint probability $p_{12}$. As mentioned in the introduction, this model emerged from the study of network privacy \cite{pedarsani2011privacy} and is the basic underlying model in graph matching problems (see \cite{mao2024testing,mao2023random,lyzinski2015graph,lyzinski2014seeded} and the references therein). If one were to describe the limit of $\bm G_n$ as a 2-multiplexon $\bm W = (W_1, W_2) \in \mathcal{W}^2$, it is natural to expect that the graphs in each layer converge to the corresponding marginal graphons. This would enforce $W_1=p_1$ and $W_2=p_2$ almost everywhere. However, this clearly does not ensure convergence of the multiplex homomorphism densities. For example, suppose $H_1 = H_2 = K_2$ is the single edge (the complete graph on 2 vertices) on the vertex set $\{1, 2\}$ and $\bm H = (H_1, H_2)$. Then 
$$t( \bm H, \bm W) = \int_{[0, 1]^2} W_1(x, y) W_2(x, y) \mathrm d x \mathrm d y =p_1p_2.$$
However, $$t(\bm H, \bm G_n) = \frac{1}{n^2}\sum_{1 \leq i \ne j \leq n} \bm 1 \{ (i, j) \in E((G_n)_{1}) \cap E((G_n)_{2}) \} \rightarrow p_{12}.$$ 
in probability. This illustrates the need to consider higher (more than 2)  dimensional product spaces for describing the convergence of 2-multiplexes. 
\end{example}

It is also worth noting that, unlike in the case of graphons, the map $\bm W \rightarrow t( \bm H, \bm W)$ is not continuous in general. To see this, let $r=2$  and consider $\bm H = (H_1, H_2)$ as in the example above. In this case, for $\bm W = (W_1, W_2)$, 
$$t( \bm H, \bm W) = \int_{[0, 1]^2} W_1(x, y) W_2(x, y) \mathrm d x \mathrm d y.$$
However, this function is not continuous in the $\delta_{\square}$ metric on $\mathcal{W}^2$. In particular, on the set $W_1= W_2 = W$ the function $W \rightarrow \int_{[0, 1]^2} W(x, y)^2 \mathrm d x \mathrm d y$ is not continuous (see \cite[Example C.3.]{janson2010graphons}).

\subsection{Decomposition of Multiplexes and Multiplexons}

To circumvent the issue discussed above, we decompose a multiplex/multiplexon in two different ways. These decompositions allow us to account for dependencies between the different layers of a multiplex. Towards this, let $\P([r])$ be the collection of all subsets of $[r]$ and $\P_0([r]) = \P([r]) \setminus \{\emptyset\}$ the collection of all non-empty subsets of $[r]$.  

%Then $\wsp^{R-1}$ is the product space of $\wsp$ indexed by elements of $\P_0([r])$. 

\begin{definition}
\label{def:decomp} 
For any $r$-multiplex $\bm{H} = (V(\bm H), H_1, H_2, \ldots, H_r)$, we define the \emph{disjoint decomposition} of $\bm{H}$ to be the $(2^r-1)$-multiplex $\hat{\bm{H}}$ given by
\begin{align}\label{eq:hatH}
\hat{\bm{H}} \defeq (V(\bm H), (\hat{H}_S)_{S \in \P_0([r])}), 
\end{align}
where for each $S \in \P_0([r])$, $\hat{H}_S = (V(\bm{H}), E(\hat{H}_S))$ and
\[E(\hat{H}_S) \defeq \{ \{i, j\}\subseteq V(\bm{H}): (i, j) \in E(H_s) \te{ for all } s \in S \text{ and } (i, j) \notin E(H_s) \te{ for all } s \notin S \} .\] 
Note that in a disjoint decomposition, for $S\neq S'$, $E(\hat{H}_S) \cap E(\hat{H}_{S'}) = \emptyset$. 
\end{definition}

In Figure \ref{fig:h}, we show the disjoint decomposition for the 2-layered multiplex 
\begin{align}\label{eq:H123}
\bm H = (\{1, 2, 3\}, E(H_1), E(H_2) ) , 
\end{align} 
where $H_1=  (\{1, 2, 3\}, \{(1, 2), (2, 3), (1, 3)\})$ is the triangle (colored in blue) and $H_2=  (\{1, 2, 3\}, \{(2, 3), $ $(1, 3)\})$ is the 2-star (colored in red). Note that 
\begin{itemize} 

\item the edge $(1, 2)$ is only in layer 1, hence $\hat H_{\{1\}}$ is a single edge and an isolated vertex, 

\item there are no edges that belong only in layer 2, hence $\hat H_{2}$ is an empty graph with 3 isolated vertices, 

\item the edges $(2, 3), (1, 3)$ are in both layer 1 and layer 2, hence $\hat H_{\{1, 2\}}$ is a 2-star. 

\end{itemize}

\begin{figure}
    \centering
    \includegraphics[scale=0.65]{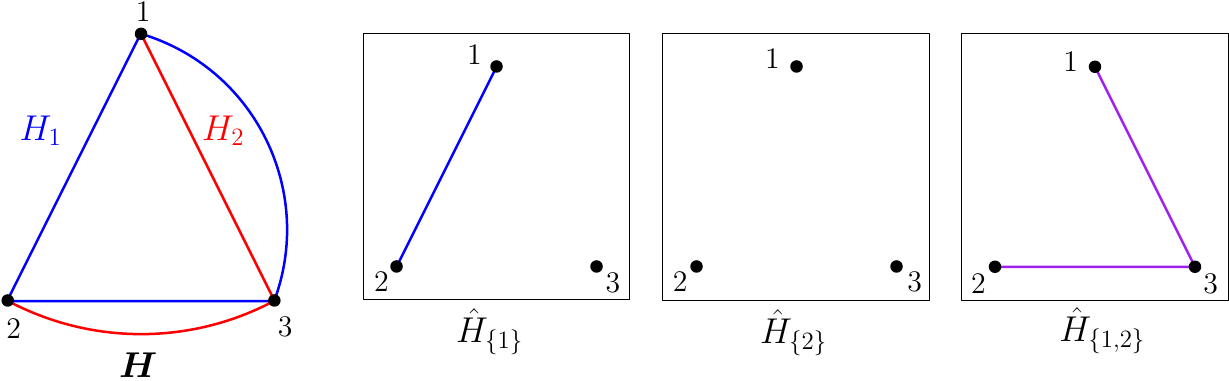}
    \caption{\small{ The disjoint decomposition of the 2-layered multiplex $\bm{H} = (H_1, H_2)$. }}
    \label{fig:h}
\end{figure}

Next, we present another natural way to decompose the edges of a multiplex network. 

%Another natural way to decompose the edge set of a multiplex is to assign an edge to every subset in which it belongs.

\begin{definition}
\label{def:cumuldecomp} 
For any $r$-multiplex $\bm{H} = (V(\bm H), H_1, H_2, \ldots, H_r)$, we define the \emph{cumulative decomposition} of $\bm{H}$ to be the $(2^r-1)$-multiplex $\widebar{\bm{H}}$ given by
\[\widebar{\bm{H}} \defeq (V(\bm H), (\widebar{H}_S)_{S \in \P_0([r])}),\]
where for each $S \in \P_0([r])$, $\widebar{H}_S = (V(\bm{H}), E(\widebar{H}_S))$ and
\[E(\widebar{H}_S) \defeq \{ \{i, j\}\subseteq V(\bm{H}): (i, j) \in E(H_s) \te{ for all } s \in S  \} .\] 
Note that in a cumulative decomposition, for $S\subseteq S'$, $E(\widebar H_S) \subseteq E(\widebar H_{S'})$.
\end{definition}

%In a cumulative decomposition each edge in $\bigcup_{s=1}^r E(H_s)$ is assigned to every subset of layers in $\bm H$ to which it belongs. In this case, for $S\subseteq S'$, $E(\widebar H_S) \subseteq E(\widebar H_{S'})$. 

In Figure \ref{fig:hc} we show the cumulative decomposition of the multiplex $\bm H$ in \eqref{eq:H123}. Note that 
\begin{itemize} 

\item the edges $(1, 2), (2, 3), (1, 3)$ are in layer 1, hence $\widebar{H}_{\{1\}}$ is a triangle, 

\item the edges $(2, 3), (1, 3)$ are in layer 2, hence $\widebar{H}_{\{2\}}$ is a 2-star, 

\item the edges $(2, 3), (1, 3)$ are in both layer 1 and layer 2, hence $\widebar{H}_{\{1, 2\}}$ is a 2-star. 

\end{itemize}

\begin{figure}
    \centering
    \includegraphics[scale=0.65]{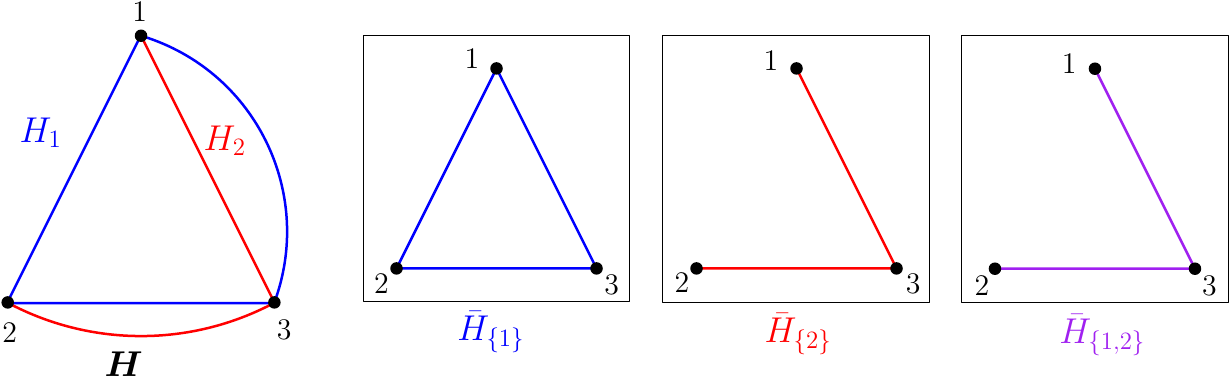}
    \caption{\small{ The cumulative decomposition of the 2-layered multiplex $\bm{H}=(H_1, H_2)$. }}
    \label{fig:hc}
\end{figure}

\begin{remark}[Multilinks]\label{remark:mlink}
Another alternative way to encode the layer structure of a multiplex network is through the notion of {\it multilinks} \cite[Section 7.3.2]{bianconi2018multilayer}. To explain this, consider an $r$-multiplex $\bm{H} = (V(\bm H), H_1, H_2, \ldots, H_r)$. The multilink between the nodes $i , j \in V(\bm H)$ is the $r$-dimensional binary vector 
$$\overrightarrow{m}_{ij} = (a_{ij}^{(1)}, a_{ij}^{(2)}, \ldots, a_{ij}^{(r)}),$$
where $a_{ij}^{(s)} = \bm 1 \{ (i, j) \in E(H_s)\}$, for $1 \leq s \leq r$. In other words, $\overrightarrow{m}_{ij}$ encodes the layers in $\bm{H}$ which contain the edge $(i, j)$. Let $$S_{\overrightarrow{m}_{ij}}  = \{s \in [r]: a_{ij}^{(s)}=1\}$$ be the subset of $[r]$ associated with the binary vector $\overrightarrow{m}_{ij}$. If we denote $\overrightarrow{H}_S = (V(\bm H), E(\overrightarrow{H}_S)) $ with $E(\overrightarrow{H}_S) = \{ 1 \leq i < j \leq |V(\bm G)|: S_{\overrightarrow{m}_{ij}}=S\}$, then $(V(\bm H), (\overrightarrow H_S)_{S \in \P_0([r])})$ is precisely the disjoint decomposition of $\bm{H}$ (recall \eqref{eq:hatH}). 
\end{remark}

Next, we define the decomposability of multiplexons. Towards this, denote by $\mathcal M(\P([r]))$ the collection of probability measure on $\P([r])$. 
Also, define $R := 2^r$. 

\begin{definition}\label{defintion:WR}
An $R$-multiplexon $\hat{\bm{W}} = (\hat W_S)_{S \in \P([r])} \in \wsp^{ R}$ is \emph{disjointly decomposable} if for almost every $(x,y) \in [0,1]^{2}$, 
\begin{align}\label{eq:hdisjoint}
\sum_{S \in \P([r])} \hat W_S(x,y) = 1. 
\end{align}
Denote by ${\wsp}_{\triangle}^{ R} \defeq \{\hat{\bm{W}} \in \wsp^{ R}:  \hat{\bm{W}} \te{ is disjointly decomposable}\}$. Furthermore, a $(R-1)$-multiplexon $\widebar{\bm{W}}  = (\widebar W_S)_{S \in \P_0([r])} \in \wsp^{ R-1}$ is \emph{cumulatively decomposable} if for almost every $(x,y) \in [0,1]^{2}$, there exists a probability measure $\mu_{x, y} \in \mathcal M(\P([r]))$ such that for all $S\in \P_0([r])$, 
\begin{align}\label{eq:Wxycumulative}
\sum_{S' : S'\supseteq S} \mu_{x, y} (S') = {\widebar W}_S(x, y). 
\end{align} 
Denote by ${\wsp}_{\blacktriangle}^{ R -1 } \defeq \{\widebar{\bm{W}} \in \wsp^{ R-1}:  \widebar{\bm{W}} \te{ is cumulatively decomposable}\}$. 
\end{definition}

One can construct a disjointly decomposable multiplexon from a cumulatively decomposable multiplexon and vice versa, as follows: 

\begin{itemize}

\item Given a cumulatively decomposable multiplexon $\widebar{\bm{W}} = (\bar W_S)_{S \in \P_0([r])} \in {\wsp}_{\blacktriangle}^{ R -1 }$, define its disjoint decomposition $\hat{\bm{W}} = (\hat W_S)_{S \in \P([r])} \in {\wsp}_{\triangle}^{ R }$ as follows: 
\begin{align}\label{eq:cd}
\hat{W}_{S} \defeq \sum_{S'\supseteq S} (-1)^{|S'\setminus S|} \widebar {W}_{S'}, \quad \text{ for } S\neq \emptyset , 
\end{align}
and $\hat{W}_{\emptyset} := 1 - \sum_{S \in \P_0([r])} \hat{W}_{S}$. Note that, by definition,  $\sum_{S \in \P([r])} \hat W_S = 1$. Hence, for \eqref{eq:cd} to be well-defined, we need verify that $\hat{W}_{S} \in [0, 1]$, for each $S\neq \emptyset$. To this end, from \eqref{eq:Wxycumulative}, we have, for $S\neq \emptyset$, 
\begin{align}\label{eq:Wcumulative}
\hat{W}_{S}(x, y) & = \sum_{S'\supseteq S} (-1)^{|S'\setminus S|} \sum_{S'' : S''\supseteq S'} \mu_{x, y} (S'') \nonumber \\  
& =  \sum_{S'' \in \P_0([r])} \mu_{x, y} (S'') \sum_{S' \in \P_0([r])} (-1)^{|S'|-|S|} \bm 1\{ S \subseteq S' \subseteq S''\} \nonumber \\ 
%& =  \sum_{S'' \in \P_0([r])} \mu_{x, y} (S'') \sum_{S' \in \P_0([r])} (-1)^{|S'|-|S|} \bm 1\{ S \subseteq S' \subseteq S''\} \nonumber \\  
& = \sum_{S'' \in \P_0([r])} \mu_{x, y} (S'')  \bm 1\{ S \subseteq S''\}  \sum_{s=|S|}^{|S''|} (-1)^{s-|S|} {{|S''|-|S| \choose s-|S|}} \nonumber \\ 
& = \sum_{S'' \in \P_0([r])} \mu_{x, y} (S'') \bm 1\{ S \subseteq S''\} \sum_{t=0}^{|S''|-|S|} (-1)^{t} {{|S''|-|S| \choose t}} \nonumber \\ 
& = \sum_{S'' \in \P_0([r])} \mu_{x, y} (S'') \bm 1 \{S''= S\} = \mu_{x, y}(S)  \in [0, 1]. 
\end{align} 
This shows that under \eqref{eq:cd}, $\hat{\bm{W}} = (\hat W_S)_{S \in \P([r])} \in \wsp^{R-1}_{\triangle}$ is disjointly decomposable. 

\item Conversely, given a disjointly decomposable multiplexon $\hat{\bm{W}} = (\hat W_S)_{S \in \P([r])} \in {\wsp}_{\triangle}^{ R }$, define its cumulative decomposition $\widebar{\bm{W}} = (\widebar W_S)_{S \in \P_0([r])} \in {\wsp}_{\blacktriangle}^{ R -1 }$ as follows: 
\begin{align}\label{eq:dc}
\widebar{W}_{S} \defeq \sum_{S'\supseteq S} \hat {W}_{S} , 
\end{align}
for $S\neq \emptyset$.  For \eqref{eq:dc} to be well defined, we need to show that \eqref{eq:Wxycumulative} holds. To this end, define $\mu_{x, y}(S) = \hat{W}_{S}(x, y)$, for $S \in \P([r])$ and $x, y \in [0,1]$. Note that $\mu_{x, y} \in \mathcal M(\P_0([r]))$, since $\sum_{S \in \P([r])}\mu_{x, y}(S)  = \sum_{S \in \P([r])} \hat{W}_{S}(x, y)  = 1$. Hence, $\widebar{\bm{W}} = (\widebar W_S)_{S \in \P_0([r])}$ as in \eqref{eq:dc} is cumulatively decomposable. 
\end{itemize} 

\begin{remark}[Cut distance under different decompositions]
The representations \eqref{eq:cd} and \eqref{eq:dc} describe a bijection between $\wsp^{R-1}_{\blacktriangle}$ and $\wsp^{R-1}_{\triangle}$. In fact, this bijection is bicontinuous. Consider two cumulatively decomposable multiplexons $\widebar{\bm{W}}_1$ and $\widebar{\bm{W}}_2$ and their respective disjoint decompositions $\hat{\bm{W}}_1$ and $\hat{\bm{W}}_2$. Then the following holds: 
\begin{align}\label{eq:Wdistance}
\frac{1}{R}\|\hat{\bm{W}}_1 - \hat{\bm{W}}_2\|_{\square}\leq \|\widebar{\bm{W}}_1 - \widebar{\bm{W}}_2\|_{\square} \leq R\|\hat{\bm{W}}_1 - \hat{\bm{W}}_2\|_{\square}. 
\end{align}
The upper bound in \eqref{eq:Wdistance} follows from \eqref{eq:dc} and the triangle inequality: 
\begin{align*}%\label{eq:UWdc}
\|\widebar{\bm{U}} - \widebar{\bm{W}}\|_{\square} = \sum_{S \in \P_0([r])} \| \widebar{U}_S - \widebar{W}_S\|_{\square} & \leq \sum_{S \in \P_0([r])} \sum_{S' \supseteq S} \| \hat{U}_{S'} - \hat{W}_{S'} \|_{\square} \nonumber \\ 
& \leq R \sum_{S \in \P_0([r])} \| \hat{U}_{S} - \hat{W}_{S} \|_{\square} \nonumber \\ 
& \leq R \|\hat{\bm{U}} - \hat{\bm{W}}\|_{\square} . 
\end{align*}
For the lower bound \eqref{eq:Wdistance}, using \eqref{eq:cd} and the triangle inequality, it follows that
\begin{align*}
\sum_{S \in \P_0([r])} \| \hat{U}_S - \hat{W}_S\|_{\square} & \leq R \|\widebar{\bm{U}} - \widebar{\bm{W}}\|_{\square} . 
\end{align*}
Hence, 
\begin{align}\label{eq:UWcd}
\|\hat{\bm{U}} - \hat{\bm{W}}\|_{\square} = \sum_{S \in \P([r])} \| \hat{U}_S - \hat{W}_S\|_{\square} \leq 2 \sum_{S \in \P_0([r])} \| \hat{U}_S - \hat{W}_S\|_{\square} \leq 2 R \|\widebar{\bm{U}} - \widebar{\bm{W}}\|_{\square}, 
\end{align} 
where the first inequality in \eqref{eq:UWcd} follows by observing that  
$$\left\|  \hat U_{\emptyset} - \hat W_{\emptyset} \right\|_{\square} = \left\|  \sum_{S \in \P_0([r])} \left( \hat U_S - \hat W_S \right) \right\|_{\square} \leq \sum_{S \in \P_0([r])}  \left\| \hat U_S - \hat W_S \right\|_{\square} . $$
%Combining \eqref{eq:UWdc} and \eqref{eq:UWcd} and taking infimum over $\sigma \in \mathcal{F}$ gives, 
%\begin{align*}%\label{eq:deltaR}
%\frac{1}{R} \delta_{\square} ( \widebar{\bm{U}}, \widebar{\bm{W}} ) \leq \delta_{\square} ( \hat{\bm{U}}, \hat{\bm{W}} ) \leq  2R \delta_{\square} ( \widebar{\bm{U}}, \widebar{\bm{W}} ) . 
%\end{align*} 
\end{remark}

Next, we define the empirical multiplexon associated with a multiplex. 

\begin{definition} (Empirical multiplexon) \label{defn:WGn}
Given an $r$-multiplex $\bm{G} = (V(\bm G), G_1, G_2, \ldots, G_r)$, the empirical multiplexon $\widebar{\bm{W}}^{\bm{G}} = ( \widebar{W}^{\bm{G}}_S)_{S \in \P_0([r])}$ is defined as 
\begin{align}\label{eq:WGSxy}
\widebar{W}^{\bm{G}}_S(x,y) = \bm 1\{ (\lceil x |V(\bm G)| \rceil,\lceil y |V(\bm G)|\rceil) \in E(\widebar{G}_S) \} , 
\end{align} 
where $\widebar{\bm{G}} = (\widebar{G}_S)_{S \in \P_0([r])}$ is the cumulative decomposition of $\bm{G}$. 
\end{definition}

\begin{figure}
    \centering
    \includegraphics[scale=0.65]{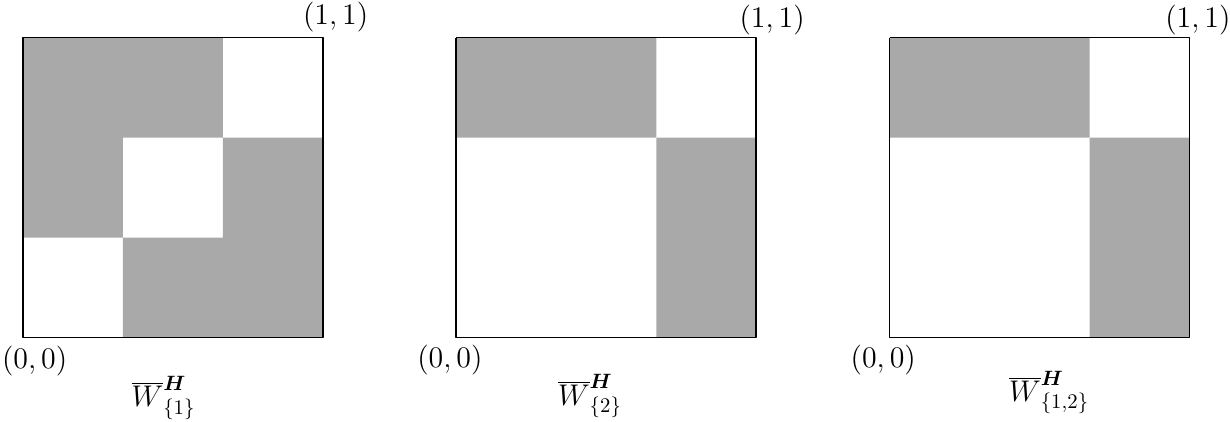}
    \caption{The empirical multiplexon corresponding to the multiplex $\bm H$ in \eqref{eq:H123}. }
    \label{fig:multiplexon}
\end{figure}

  Figure \ref{fig:multiplexon} shows the empirical multiplexon corresponding to the multiplex $\bm H$ in \eqref{eq:H123}. The graphons in the three layers, $\widebar W^{\bm{H}}_{\{ 1 \}}, \widebar W^{\bm{H}}_{\{ 2 \}}, \widebar W^{\bm{H}}_{\{ 1, 2 \}}$, are the empirical graphons corresponding to $\widebar{H}_{\{1\}}$, $\widebar{H}_{\{2\}}$, and $\widebar{H}_{\{1, 2\}}$, respectively, as in shown Figure \ref{fig:hc}. The grey regions are where the function values are 1.

\begin{remark}[Empirical multiplexons are cumulatively decomposable] Given an $r$-multiplex $\bm{G}$ as above, denote, for $x, y \in [0,1]$, 
$$L_{x, y} := \{s \in [r]: (\lceil x |V(\bm G)| \rceil,\lceil y |V(\bm G)|\rceil) \in E(G_s) \},$$
the set of layers which contain the edge $(\lceil x |V(\bm G)| \rceil,\lceil y |V(\bm G)|\rceil)$. Consider the measure $\mu_{x, y}$ which puts point mass on the set $L_{x, y}$, that is, 
$$\mu_{x, y} (S) = \bm 1\{S = L_{x, y}\},$$ 
for $S \in \P([r])$. Then, recalling \eqref{eq:WGSxy}, 
$$ \widebar{W}^{\bm{G}}_S(x,y) = \bm 1 \{ S \subseteq L_{x, y}\} = \sum_{S' : S'\supseteq S} \mu_{x, y}(S').$$ 
This shows that $\widebar{\bm{W}}^{\bm{G}}$ is cumulatively decomposable, that is, $\widebar{\bm{W}}^{\bm{G}} \in {\wsp}_{\blacktriangle}^{R-1}$. 
\end{remark}

We now relate the homomorphism of density between 2 multiplexes $\bm H$ and $\bm G$, in terms of the homomorphism density of the disjoint decomposition $\hat{\bm{H}}$ of $\bm H$ in the empirical multiplexon $\widebar{\bm{W}}^{\bm G}$. 

\begin{proposition}
\label{proposition:gtow}
For any two $r$-multiplexes $\bm{H} = (V(\bm H), H_1, H_2, \ldots, H_r)$ and $\bm{G} = (V(\bm G),$ $ G_1,G_2, \ldots, G_r)$,
\begin{align}\label{eq:gtow}
t(\bm{H},\bm{G}) = t(\hat{\bm{H}},\widebar{\bm{G}}) = t(\hat{\bm{H}}, \widebar{\bm{W}}^{\bm G}) , 
\end{align}
where $\hat{\bm{H}}$ is the disjoint decomposition of $\bm{H}$, $\widebar{\bm{G}}$ is the cumulative decomposition of $\bm{G}$, and $\widebar{\bm{W}}^{\bm G}$ is the empirical multiplexon of $\bm{G}$. 
\end{proposition}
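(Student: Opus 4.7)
The plan is to prove the two equalities separately. The first equality $t(\bm{H},\bm{G}) = t(\hat{\bm{H}},\widebar{\bm{G}})$ will follow from the stronger combinatorial identity that the hom sets themselves coincide, $\mathrm{hom}(\bm{H},\bm{G}) = \mathrm{hom}(\hat{\bm{H}},\widebar{\bm{G}})$, regarded as sets of vertex maps $\phi: V(\bm H) \to V(\bm G)$ (note that both sides use the same normalization $|V(\bm G)|^{|V(\bm H)|}$). The second equality $t(\hat{\bm{H}},\widebar{\bm{G}}) = t(\hat{\bm{H}}, \widebar{\bm{W}}^{\bm G})$ is the multiplex extension of the elementary identity $t(H,G) = t(H, W^G)$ from Section \ref{sec:W}; it amounts to showing that the integral against the empirical multiplexon is a step-function sum that enumerates exactly the homomorphisms from $\hat{\bm H}$ into $\widebar{\bm G}$.

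For the first equality, I would fix an arbitrary $\phi: V(\bm{H}) \to V(\bm{G})$ and verify both inclusions. For the forward direction, assume $\phi \in \mathrm{hom}(\bm{H},\bm{G})$ and take $(u,v) \in E(\hat{H}_S)$ for some $S \in \P_0([r])$. By Definition \ref{def:decomp}, $(u,v) \in E(H_s)$ for every $s \in S$, so $(\phi(u),\phi(v)) \in E(G_s)$ for every $s \in S$, which is precisely the condition $(\phi(u),\phi(v)) \in E(\widebar{G}_S)$ from Definition \ref{def:cumuldecomp}. For the reverse direction, assume $\phi \in \mathrm{hom}(\hat{\bm{H}},\widebar{\bm{G}})$, fix $s \in [r]$ and $(u,v) \in E(H_s)$, and let $S := \{t \in [r] : (u,v) \in E(H_t)\}$. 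Then $s \in S$, $(u,v) \in E(\hat{H}_S)$ by construction, and thus $(\phi(u),\phi(v)) \in E(\widebar{G}_S) \subseteq E(G_s)$, where the inclusion holds because $s \in S$. Hence $\phi$ is a homomorphism from $H_s$ to $G_s$ for every $s$, which gives $\phi \in \mathrm{hom}(\bm{H},\bm{G})$. Dividing $|\mathrm{hom}(\bm{H},\bm{G})| = |\mathrm{hom}(\hat{\bm{H}},\widebar{\bm{G}})|$ by $|V(\bm{G})|^{|V(\bm{H})|}$ yields $t(\bm{H},\bm{G}) = t(\hat{\bm{H}},\widebar{\bm{G}})$.

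For the second equality, I would set $n := |V(\bm{G})|$ and $m := |V(\bm{H})|$ and partition $[0,1]$ into the intervals $I_k := ((k-1)/n, k/n]$ for $k \in [n]$. Expanding the definition \eqref{eq:tHW},
\begin{align*}
t(\hat{\bm{H}}, \widebar{\bm{W}}^{\bm G}) = \int_{[0,1]^m} \prod_{S \in \P_0([r])} \prod_{(i,j) \in E(\hat{H}_S)} \widebar{W}^{\bm{G}}_S(x_i,x_j)\, \mathrm dx_1 \cdots \mathrm dx_m,
\end{align*}
and on the box $\prod_{\ell=1}^{m} I_{k_\ell}$ (of volume $n^{-m}$) indexed by $(k_1,\ldots,k_m) \in [n]^m$, each factor $\widebar{W}^{\bm{G}}_S(x_i,x_j)$ is constant and equal to $\bm 1\{(k_i,k_j) \in E(\widebar{G}_S)\}$ by \eqref{eq:WGSxy}. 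Therefore the integrand on this box is exactly the indicator that the map $\phi: i \mapsto k_i$ lies in $\mathrm{hom}(\hat{\bm{H}},\widebar{\bm{G}})$. Summing over the $n^m$ boxes produces $|\mathrm{hom}(\hat{\bm{H}},\widebar{\bm{G}})|/n^m = t(\hat{\bm{H}},\widebar{\bm{G}})$, as required.

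The proof is essentially routine and presents no real obstacle; the only conceptual step is the hom-set identity in the first equality, which formalizes the intuition that the disjoint decomposition of the source $\bm H$ together with the cumulative decomposition of the target $\bm G$ faithfully encodes the multi-layer edge constraints. No measure-theoretic or compactness issues intervene, and the argument is short.
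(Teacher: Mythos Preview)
Your proposal is correct and follows essentially the same approach as the paper: both prove the first equality by establishing the hom-set identity $\mathrm{hom}(\bm{H},\bm{G}) = \mathrm{hom}(\hat{\bm{H}},\widebar{\bm{G}})$ via the two inclusions (your set $S := \{t \in [r] : (u,v) \in E(H_t)\}$ is precisely the paper's unique $T_s$), and both prove the second equality by partitioning $[0,1]^{|V(\bm H)|}$ into boxes indexed by vertex maps and reading off the constant value of the step-function integrand on each box.
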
 
 
\begin{proof} We start by proving the first equality in \eqref{eq:gtow}. Noting that $V(\bm{H}) = V(\hat{\bm{H}})$ and $V(\bm{G}) = V(\widebar{\bm{G}})$, it suffices to show that $\phi: V(\bm{H}) \to V(\bm{G})$ is a homomorphism from $\bm{H}$ to $\bm{G}$ if and only if it is a homomorphism from $\hat{\bm{H}}$ to $\widebar{\bm{G}}$. 
\begin{itemize}

\item For the forward direction, suppose $\phi$ is a homomorphism from $\bm{H}$ to $\bm{G}$. This means, recalling Definition \ref{definition:homomorphismGH}, $\phi$ is a homomorphism from the graph $H_s = (V(\bm H), E(H_s))$ to the graph $G_s = (V(\bm G), E(G_s))$, for each $1 \leq s \leq r$. Now, fix a non-empty subset $S \in \P_0([r])$ and suppose $e = (u, v) \in E(\hat{H}_S)$, for $u, v \in V(\bm{H})$. Then for each $s \in S$, we have $e \in E(H_s)$, hence, $(\phi(u), \phi(v)) \in E(G_s)$. Since this holds for all $s \in S$, $(\phi(u), \phi(v)) \in E(\widebar G_S)$, which implies that $\phi$ is a homomorphism from $\hat{H}_S$ to $\widebar{G}_S$. 

\item In the reverse direction, suppose $\phi$ is a homomorphism from $\hat{\bm{H}}$ to $\widebar{\bm{G}}$. Then for any $s \in [r]$ and $e = (u, v) \in E(H_s)$, from the definition of disjoint decomposition, there exists a unique subset $T_s \in \P_0([r])$ containing $s$, such that $e \in E(\hat H_{T_s})$. Hence, by Definition \ref{definition:homomorphismGH}. $\phi$ must be a homomorphism from $\hat H_{T_s}$ to $\widebar G_{T_s}$, which means, $(\phi(u), \phi(v)) \in E(\widebar G_{T_s})$. Since $s \in T_s$, this implies $(\phi(u), \phi(v)) \in E(G_s)$. This shows, $\phi$ is a homomorphism from $H_s$ to $G_s$, for any $s \in [r]$. Hence, $\phi$ is a homomorphism from $\bm{H}$ to $\bm{G}$. 
\end{itemize}

Next, we proceed to prove the second equality in \eqref{eq:gtow}. Toward this, for $i \in [|V(\bm{G})|]$ denote 
\begin{align}\label{eq:interval}
Q_i= \left( \frac{i-1}{|V(\bm{G})|}, \frac{i}{|V(\bm{G})|} \right] , 
\end{align}
 and, for $K \geq 1$ and $i_1,\dots,i_K \in [|V(\bm{G})|]$, define
\begin{equation*}
%\label{eq:Bdef}
Q_{i_1\dots i_K} \defeq \bigtimes_{L=1}^{K} Q_{i_{L}}\subseteq [0,1]^{K}.
\end{equation*} 
Then, with $K= |V(\bm{H})|$ and recalling \eqref{eq:tHW},  
\begin{align*}
& t(\hat{\bm{H}},\bm{W}^{G}) \\ 
&= \int_{[0,1]^{|V(\bm{G})|}} \prod_{S \in \P_0([r])} \prod_{(i, j) \in E(\hat{H}_S)} \widebar{W}_S^{\bm G}(x_i, x_j)\, \prod_{i=1}^K \mathrm d x_i\\
&= \sum_{(i_1,\dots,i_{K}) \in [|V(\bm{G})|]^{K}}\int_{Q_{i_1\dots i_{K}}} \prod_{S \in \P_0([r])} \prod_{(i, j) \in E(\hat{H}_S)} \widebar{W}_S^{\bm G}(x_i,x_j)\, \prod_{i=1}^K \mathrm d x_i\\
&= \sum_{\phi: V(\bm{H}) \to V(\bm{G})}\int_{Q_{\phi(1)\dots \phi(K)}} \prod_{S \in \P_0([r])} \prod_{(i, j) \in E(\hat{H}_S)} \widebar{W}_S^{\bm G}(x_i,x_j)\, \prod_{i=1}^K \mathrm d x_i\\
&= \sum_{\phi: V(\bm{H}) \to V(\bm{G})}\int_{Q_{\phi(1)\dots \phi(K)}} \prod_{S \in \P_0([r])} \prod_{(i, j) \in E(\hat{H}_S)} \bm 1\{(x_i,x_j) \in Q_{\phi(i)\phi(j)} \} \widebar{W}_S^{\bm G}(x_i,x_j)\, \prod_{i=1}^K \mathrm d x_i\\
&= \sum_{\phi: V(\bm{H}) \to V(\bm{G})}\int_{Q_{\phi(1)\dots \phi(K)}} \prod_{S \in \P_0([r])} \prod_{(i, j) \in E(\hat{H}_S)} \bm 1\{( \phi(i),\phi(j)) \in E(\widebar{G}_S)\} \, \prod_{i=1}^K \mathrm d x_i\\
&=   \sum_{\phi: V(\bm{H}) \to V(\bm{G})} \frac{\bm 1\{\phi \in \mathrm{hom}(\hat{\bm{H}},\widebar{\bm{G}})\}}{|V(\bm{G})|^{K}} = t(\hat{\bm{H}},\widebar{\bm{G}}).
\end{align*} 
This completes the proof of Proposition \ref{proposition:gtow}. 
\end{proof}

As an illustration of the above result, suppose $\bm{H}$ is the 2-layered multiplex in \eqref{eq:H123} and $\bm{G} = ([n], G_1, G_2 )$ is a 2-layered multiplex with vertex set $[n]$. If the adjacency matrices of the 2 layers $G_1$ and $G_2$ are denoted by $A_{G_1} = (a_{ij}^{(1)})_{1 \leq i, j \leq n}$ and $A_{G_2} = (a^{(2)}_{ij})_{1 \leq i, j \leq n}$, then one can express the homomorphism density as: 
\begin{align}\label{eq:H123W}
t(\bm H, \bm G) = \frac{1}{n^3} \sum_{i_1, i_2, i_3 \in [n] } a^{(1)}_{i_1 i_2} a^{(1)}_{i_2 i_3} a^{(2)}_{i_2 i_3} a^{(1)}_{i_1 i_3} a^{(2)}_{i_1 i_3}. 
\end{align}
Note that $a^{(1)}_{i_1 i_2} = \bm 1\{(i_1, i_2) \in E(\widebar G_{\{1\}}) \}$, $a^{(1)}_{i_2 i_3} a^{(2)}_{i_2 i_3} =  \bm 1\{(i_1, i_2) \in E(\widebar G_{\{1, 2\})}\}$, and $a^{(1)}_{i_2 i_3} a^{(2)}_{i_2 i_3} =  \bm 1\{(i_1, i_3) \in E(\widebar G_{\{1, 2\}})\}$. Hence, recalling the definition of the empirical multiplexon from \eqref{eq:WGSxy}, the RHS of \eqref{eq:H123W} can expressed as: 
$$t(\bm H, \bm G)= \int_{[0, 1]^3} \widebar{W}_{\{1\}}^{\bm G}(x_1, x_2) \widebar{W}_{\{1, 2\}}^{\bm G}(x_2, x_3) \widebar{W}_{\{1, 2\}}^{\bm G}(x_1, x_3) \mathrm d x_1 \mathrm d x_2 \mathrm d x_3= t(\hat{\bm{H}},\widebar{\bm{W}}^{\bm G}),$$  
where the second equality above follows from \eqref{eq:tHW}. This verifies \eqref{eq:gtow} when $\bm{H}$ is the 2-layered multiplex in \eqref{eq:H123}.
 
\section{The Space of Cumulatively Decomposable Multiplexons}
\label{sec:compactness} 

%Analogous to the role of graphons in generating inhomogeneous random graphs, multiplexons provide a natural framework for sampling inhomogeneous random multiplex networks. 

The cumulative decomposition of a multiplexon provides a convenient way to represent homomorphism densities (as shown in Proposition \ref{proposition:gtow}). Consequently, the space ${\wsp}_{\blacktriangle}^{R-1}$ of cumulatively decomposable multiplexons is a natural candidate for representing the limit objects of convergent sequences of multiplex networks. In this section, we collect various important properties of the space ${\wsp}_{\blacktriangle}^{R-1}$. We begin by showing that ${\wsp}_{\blacktriangle}^{R-1}$ is closed under the cut norm.

\begin{proposition}
\label{ppn:closedcumdec}
For any $r \geq 1$ and $R=2^{r}$, the space ${\wsp}_{\blacktriangle}^{R-1}$ is closed under the cut norm $\|\cdot\|_\square$ (as defined in \eqref{eq:UWsquarelayer}). 
\end{proposition}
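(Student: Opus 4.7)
My plan is to reduce closedness of $\mathcal{W}^{R-1}_\blacktriangle$ to the fact that almost-everywhere nonnegativity of bounded measurable functions on $[0,1]^2$ is preserved under cut-norm limits. The bridge is the invertible linear correspondence between cumulatively decomposable and disjointly decomposable multiplexons given by \eqref{eq:cd} and \eqref{eq:dc}.

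First, I would record the following cleaner characterization: a multiplexon $\widebar{\bm{W}} \in \mathcal{W}^{R-1}$ lies in $\mathcal{W}^{R-1}_\blacktriangle$ if and only if the functions
\[\hat{W}_S \defeq \sum_{S' \supseteq S} (-1)^{|S' \setminus S|} \widebar{W}_{S'} \quad (S \in \P_0([r])), \qquad \hat{W}_\emptyset \defeq 1 - \sum_{S \in \P_0([r])} \hat{W}_S,\]
satisfy $\hat{W}_S \geq 0$ almost everywhere for every $S \in \P([r])$. The forward direction is the computation \eqref{eq:Wcumulative} (which identifies $\hat{W}_S(x,y)$ with $\mu_{x,y}(S)$); the reverse direction is the construction given right after \eqref{eq:dc}, since nonnegativity together with the sum-to-one condition built into $\hat{W}_\emptyset$ makes $\mu_{x,y}(S) \defeq \hat{W}_S(x,y)$ a valid probability measure.

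Suppose now that $\widebar{\bm{W}}_n \to \widebar{\bm{W}}$ in the cut norm \eqref{eq:UWsquarelayer}, with each $\widebar{\bm{W}}_n \in \mathcal{W}^{R-1}_\blacktriangle$. Each $\hat{W}_{S,n}$ is a fixed $\pm 1$-coefficient linear combination of the coordinates of $\widebar{\bm{W}}_n$, and $\hat{W}_{\emptyset,n}$ is $1$ minus such a combination, so the triangle inequality yields $\hat{W}_{S,n} \to \hat{W}_S$ in $\|\cdot\|_\square$ for every $S \in \P([r])$, where $\hat{W}_S$ is the corresponding combination of the coordinates of $\widebar{\bm{W}}$. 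By the characterization above, the task therefore reduces to showing: if $\{f_n\}_{n \geq 1}$ are bounded measurable symmetric functions on $[0,1]^2$ with $f_n \geq 0$ a.e.\ and $f_n \to f$ in $\|\cdot\|_\square$, then $f \geq 0$ a.e.

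For this final step I would observe that cut-norm convergence forces $\int_{A \times B} f_n \to \int_{A \times B} f$ for every measurable rectangle $A \times B \subseteq [0,1]^2$, whence $\int_{A \times B} f \geq 0$, and by linearity $\int_{\mathcal{R}} f \geq 0$ for every finite disjoint union of measurable rectangles $\mathcal{R}$. If $\{f < 0\}$ had positive Lebesgue measure then, for some $\delta > 0$, the set $\{f < -\delta\}$ would also have positive measure; outer regularity lets one approximate this set arbitrarily closely by such a union $\mathcal{R}$, forcing $\int_\mathcal{R} f < 0$, a contradiction. I do not anticipate a serious obstacle: the only delicate point is verifying that the characterization in the second paragraph is truly an equivalence, and the inclusion-exclusion bookkeeping needed there is already carried out in the passage leading to \eqref{eq:Wcumulative}.
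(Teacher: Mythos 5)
Your proof is correct and takes the same route as the paper: pass to the disjoint decomposition via \eqref{eq:cd}, note that cut-norm convergence of the $\widebar{W}_{S,n}$ forces cut-norm convergence of the $\hat{W}_{S,n}$ (including $\hat{W}_{\emptyset,n}$), and conclude that the limit is disjointly decomposable. The paper's own argument simply asserts that the a.e.\ inequalities $\hat{W}_{S,n}\ge 0$ and $\sum_{S}\hat{W}_{S,n}\le 1$ pass to the cut-norm limit without justification; your final paragraph (approximating $\{f<-\delta\}$ in measure by a finite union of rectangles and using boundedness of $f$) supplies exactly the missing step, so your version is, if anything, more complete than the one in the paper.
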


\begin{proof} If $r = 1$ (that is, $R-1=1$), then all $1$-multiplexons are cumulatively decomposable, so the result holds by completeness of $\wsp$. 

Fix $r \geq 2$ and let $\{\widebar{\bm{W}}_n\}_{n \geq 1} = \{ ((\widebar{W}_n)_S)_{S \in \P_0([r])}\}_{n \geq 1}$ be a sequence of cumulatively decomposable multiplexons converging in the cut norm to a $(R-1)$-multiplexon $\widebar{\bm{W}} = ((\widebar{W})_S)_{S \in \P_0([r])}$. We will show that $\widebar{\bm{W}} \in {\wsp}_{\blacktriangle}^{R-1}$. To this end, recalling \eqref{eq:UWsquarelayer}, note that 
$$\|(\widebar{W}_n)_S - (\widebar{W})_S\|_\square \leq \|\widebar{\bm{W}}_n - \widebar{\bm{W}}\|_\square \rightarrow 0 $$
for any $S \in \P_0([r])$. Now, let $\hat{\bm{W}}_n = ((\hat{W}_n)_S)_{S \in \P_0([r])}$ be the disjoint decomposition of $\widebar{\bm{W}}_n$ (as in \eqref{eq:cd}). 
%Then $\hat{(W_n)}_S \geq 0$, for $S \in \P_0([r])$, and $\sum_{S \in \P_0([r])} \hat{(W_n)}_S \leq 1$.
Then, by \eqref{eq:cd} and the triangle inequality, 
$$\|(\hat{W}_n)_S) - (\hat{W})_S\|_\square \rightarrow 0 , $$
where $\hat{W}_{S} = \sum_{S'\supseteq S} (-1)^{|S'\setminus S|} \widebar {W}_{S'}$, for $S \ne \emptyset$. Furthermore, since $\hat{(W_n)}_S \geq 0$, for $S \in \P_0([r])$, and $\sum_{S \in \P_0([r])} \hat{(W_n)}_S \leq 1$, we have $\hat{(W)}_S \geq 0$, for $S \in \P_0([r])$, and $\sum_{S \in \P_0([r])} \hat{(W)}_S \leq 1$. Hence, defining $\hat W_{\emptyset} = 1 -\sum_{S \in \P_0([r])} \hat{(W)}_S$, we get $\hat{\bm W} = ((\hat{W})_S)_{S \in \P([r])} \in \wsp^{R}_{\triangle}$. This implies, from \eqref{eq:dc}, $\widebar{\bm{W}}\in {\wsp}_{\blacktriangle}^{R-1}$. This completes the proof of Proposition \ref{ppn:closedcumdec}. 
\end{proof}

A central result in graph limit theory is the compactness of the quotient space of graphons under the cut distance  \cite{lovasz2007szemeredi} (see also \cite[Section 9.3]{Lov12}). In the following theorem, we establish compactness for the quotient space of cumulatively decomposable multiplexons. To this end, for $r \geq 1$ and $R = 2^{r}-1$, denote by $({\wsp}_{\blacktriangle}^{R-1})_{\square}$ the collection of equivalence classes $\{\tilde{\widebar{{\bm W}}}: \widebar{\bm W} \in {\wsp}_{\blacktriangle}^{R-1}\}$ associated with the metric $\delta_\square$ (as defined in \eqref{eq:UWdeltalayer}).  
Here, $\tilde{\widebar{{\bm W}}}$ denotes the orbit of $\widebar{\bm W} \in {\wsp}_{\blacktriangle}^{R-1}$ under the equivalence relation $\sim$ (defined after \eqref{eq:UWdeltalayer}).

\begin{theorem}
\label{cumdeccom}
The space $({\wsp}_{\blacktriangle}^{R-1})_{\square}$ is well-defined and compact under the metric $\delta_\square$.  
\end{theorem}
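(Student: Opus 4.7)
The argument naturally splits into three parts: (i) well-definedness of the quotient $({\wsp}_{\blacktriangle}^{R-1})_{\square}$ as a metric space; (ii) compactness of the ambient quotient $(\wsp^{R-1})_{\square}$; and (iii) closure of ${\wsp}_{\blacktriangle}^{R-1}$ inside $(\wsp^{R-1})_{\square}$. For (i), I need only verify that the equivalence $\sim$ preserves cumulative decomposability. Indeed, if $\widebar{\bm U} \in {\wsp}_{\blacktriangle}^{R-1}$ is witnessed by $\{\mu_{x,y}\}$ as in \eqref{eq:Wxycumulative} and $\widebar{\bm U}^{\sigma_1} = \widebar{\bm W}^{\sigma_2}$ for measure-preserving bijections $\sigma_1, \sigma_2$, then the reparametrized family $\{\mu_{\sigma_1(\sigma_2^{-1}(x)),\, \sigma_1(\sigma_2^{-1}(y))}\}$ witnesses cumulative decomposability of $\widebar{\bm W}$. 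This ensures that ${\wsp}_{\blacktriangle}^{R-1}/\!\sim$ is a genuine metric space under $\delta_\square$.

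For (ii), I would extend the classical Lov\'asz--Szegedy compactness argument for $\wsp_{\square}$ (see \cite[Section 9.3]{Lov12}) to the vector setting. The essential new ingredient is a \emph{joint} weak regularity lemma: for any $\bm W = (W_1,\ldots,W_{R-1}) \in \wsp^{R-1}$ and $\epsilon > 0$, there exists a partition $\mathcal{P}$ of $[0,1]$, whose size depends only on $\epsilon$ and $R$, such that the step-function approximation satisfies $\|W_s - (W_s)_{\mathcal{P}}\|_{\square} < \epsilon$ simultaneously for every $s \in \P_0([r])$. One obtains $\mathcal{P}$ by applying the scalar weak regularity lemma to each layer and taking the common refinement; since conditional expectations only contract cut norm, the simultaneous $\epsilon$-bound survives refinement, at the cost of a partition size of order $(O(\epsilon^{-2}))^{R-1}$. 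With this in hand, the usual martingale/diagonal argument extracts a $\delta_\square$-convergent subsequence from any $\{\bm W_n\} \subset \wsp^{R-1}$.

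For (iii), suppose $\{\widebar{\bm W}_n\}_{n\geq 1} \subset {\wsp}_{\blacktriangle}^{R-1}$ and $[\widebar{\bm W}_n] \to [\widebar{\bm W}]$ in $(\wsp^{R-1})_{\square}$. Pick $\sigma_n \in \mpt$ with $\|\widebar{\bm W}_n^{\sigma_n} - \widebar{\bm W}\|_{\square} \to 0$. By the same reparametrization used in (i), each $\widebar{\bm W}_n^{\sigma_n}$ is still cumulatively decomposable, so Proposition~\ref{ppn:closedcumdec} forces $\widebar{\bm W} \in {\wsp}_{\blacktriangle}^{R-1}$. Combined with (ii), this shows $({\wsp}_{\blacktriangle}^{R-1})_{\square}$ is a closed subset of a compact space, hence compact.

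The principal technical obstacle is the joint weak regularity lemma in (ii): one must ensure that taking the common refinement of the $R-1$ layer-wise partitions controls the cut norm on every layer simultaneously, with a final size bound depending only on $\epsilon$ and $R$. A secondary subtlety is the reparametrization argument in (i) and (iii), where one must handle measurability carefully when substituting $\sigma_j^{-1}$ into the witnessing family $\{\mu_{x,y}\}$; this is straightforward for measure-preserving bijections but merits a clean statement. Once these points are settled, the remaining compactness construction is a direct vector-valued analogue of the classical one, and the proof of Theorem~\ref{cumdeccom} concludes.
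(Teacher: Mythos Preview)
Your overall architecture---well-definedness, compactness of the ambient $(\wsp^{R-1})_{\square}$, then closure via Proposition~\ref{ppn:closedcumdec}---matches the paper's proof, and your sketch for (ii) is precisely the joint-regularity-plus-martingale argument carried out in Appendix~\ref{sec:compactpf}. Part (iii) also aligns.

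The gap is in (i). You assume that $\widebar{\bm U} \sim \widebar{\bm W}$ gives $\widebar{\bm U}^{\sigma_1} = \widebar{\bm W}^{\sigma_2}$ for measure-preserving \emph{bijections} $\sigma_1,\sigma_2$, so that $\sigma_2^{-1}$ makes sense in your reparametrized family. But $\sim$ is defined as $\delta_\square(\widebar{\bm U},\widebar{\bm W})=0$, an infimum over bijections that need not be attained; and the known pointwise characterization (for graphons, \cite[Section 10.7]{Lov12}; for multiplexons, Corollary~\ref{cor:UW} in this paper) uses measure-preserving \emph{maps}, which need not be invertible---so your substitution $\sigma_2^{-1}$ is not available. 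Moreover, Corollary~\ref{cor:UW} is proved only \emph{after} Theorem~\ref{cumdeccom}, so invoking it here would be circular.

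The paper's fix is clean and uses only what you already have: from $\delta_\square(\widebar{\bm U},\widebar{\bm W})=0$, pick genuine bijections $\sigma_n\in\mpt$ with $\|\widebar{\bm U}^{\sigma_n}-\widebar{\bm W}\|_\square\to 0$; your reparametrization (which \emph{is} valid for bijections) shows each $\widebar{\bm U}^{\sigma_n}\in{\wsp}_{\blacktriangle}^{R-1}$; then Proposition~\ref{ppn:closedcumdec} forces $\widebar{\bm W}\in{\wsp}_{\blacktriangle}^{R-1}$. With that adjustment your proof goes through.
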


\begin{proof}[Proof of Theorem \ref{cumdeccom}] Fix an integer $L \geq 1$. Recall from Section \ref{sec:distancelayers} that $({\wsp}^L)_{\square}$ is the set of $L$-multiplexons where any pair of $L$-multiplexons with cut distance zero are considered the same point in the space. The proof of Theorem \ref{cumdeccom} relies on the following general result (see Appendix \ref{sec:compactpf} for a proof):

\begin{proposition}
\label{lem:compact}
For any integer $L \geq 1$, the space $({\wsp}^{L})_\square$ is compact.
\end{proposition}

Using the above result, we can easily complete the proof of Theorem \ref{cumdeccom}. We first show that $({\wsp}_{\blacktriangle}^{R-1})_{\square}$ is a well-defined space. For that we must prove that for any $\widebar{\bm{W}}_1\sim \widebar{\bm{W}}_2$, 
\begin{align}\label{eq:W12}
\widebar{\bm{W}}_1\in {\wsp}_{\blacktriangle}^{R-1} \text{ if and only if } \widebar{\bm{W}}_2\in {\wsp}_{\blacktriangle}^{R-1} . 
\end{align} 
To this end, for $\bm{W} = (W_S)_{\P_0([r])} \in \wsp^{R-1}$, define
\[\Emc(\bm{W}) \defeq \left\{(x,y)\in [0,1]^{2}: \exists~\mu_{xy} \in \mathcal M (\P([r])) \text{ such that } W_S(x,y) = \sum_{S'\supseteq S} \mu_{xy}(S'), \te{ for } S \in \P_0([r]) \right\}.\] 
Note that a multiplexon $\bm{W}$ is an element of ${\wsp}_{\blacktriangle}^{R-1}$ if and only if $\Emc(\bm{W})$ is a set of full measure. Now, we proceed to prove \eqref{eq:W12}. To begin with, suppose $\widebar{\bm{W}}_1 = ((\widebar{W}_1)_S)_{S \in \P_0([r])} \in{\wsp}_{\blacktriangle}^{R-1}$. Then for any measure preserving bijection $\sigma \in \mpt$, define 
$$\sigma^{-1}(\Emc(\widebar{\bm{W}}_1)) = \{( \sigma^{-1}(x), \sigma^{-1}(y)): (x, y) \in  \Emc(\widebar{\bm{W}}_1) \}.$$
Hence, for each $S \in \P_0([r])$, and $(x,y) \in \sigma^{-1}(\Emc(\widebar{\bm{W}}_1))$ (which means $(\sigma(x), \sigma(y)) \in \Emc(\widebar{\bm{W}}_1)$),
$$(\widebar{W}_1)_S^\sigma(x,y) = (\widebar{W}_1)_S(\sigma(x),\sigma(y)) = \sum_{ S'\supseteq S} \mu_{\sigma(x)\sigma(y)}(S'), $$ 
since $\Emc(\widebar{\bm{W}}_1)$ has full measure. Thus, $\Emc(\widebar{\bm{W}}^{\sigma}_1) \supseteq \sigma^{-1}(\Emc(\widebar{\bm{W}}_1))$ is a set of full measure, which means $\widebar{\bm{W}}^{\sigma}_1 \in {\wsp}_{\blacktriangle}^{R-1}$. Now, fix any $\widebar{\bm{W}}_2 \sim \widebar{\bm{W}}_1$. Then there exists a sequence $\{\sigma_n\}_{n \geq 1} \subset\mpt$ such that $\lim_{n\to\infty} \| \widebar{\bm{W}}_1^{\sigma_n} - \widebar{\bm{W}}_2 \|_\square =0$. Since each $\widebar{\bm{W}}_1^{\sigma_n}\in {\wsp}_{\blacktriangle}^{R-1}$  and the space ${\wsp}_{\blacktriangle}^{R-1}$ is closed (by Proposition \ref{ppn:closedcumdec}), it follows that $\widebar{\bm{W}}_2\in {\wsp}_{\blacktriangle}^{R-1}$. The reverse implication holds by an identical argument. This concludes the proof that $({\wsp}_{\blacktriangle}^{R-1})_{\square}$ is well-defined. 

To show compactness, suppose $\{\tilde{\widebar{\bm{W}}}_n\}_{ n \geq 1 }$ is a sequence of elements in $({\wsp}_{\blacktriangle}^{R-1})_{\square}$. Since $({\wsp}_{\blacktriangle}^{R-1})_{\square}\subseteq (\wsp^{R-1})_{\square}$, by Lemma \ref{lem:compact} there exists a subsequence $\{n_m\}_{m \geq 1}$ and $\tilde{\widebar{\bm{W}}}\in (\wsp^{R-1})_{\square}$ such that $\tilde{\widebar{\bm{W}}}_{n_m} \to \tilde{\widebar{\bm{W}}}$ in $(\wsp^{R-1})_{\square}$. This implies that there exists a sequence $\{\widebar{\bm{W}}_{n_m}\}_{m \geq 1} \subset {\wsp}_{\blacktriangle}^{R-1}$ with $\widebar{\bm{W}}_{n_m}\sim \tilde{\widebar{\bm{W}}}_{n_m}$ for every $m \geq 1$ and a multiplexon $\widebar{\bm{W}}\sim \tilde{\widebar{\bm{W}}}$ such that $\lim_{m\to\infty} \|\widebar{\bm{W}}-\widebar{\bm{W}}_{n_m}\|_{\square} = 0$. However, since ${\wsp}_{\blacktriangle}^{R-1}$ is closed, $\widebar{\bm{W}} \in {\wsp}_{\blacktriangle}^{R-1}$. Thus, $\tilde{\widebar{\bm{W}}} \in ({\wsp}_{\blacktriangle}^{R-1})_{\square}$. This completes the proof of the compactness of $({\wsp}_{\blacktriangle}^{R-1})_{\square}$. 
\end{proof}

\begin{remark}\label{remark:compactpf}
Versions of Proposition \ref{lem:compact} and its generalizations appear in the literature in different contexts. In particular, this is established for finitely decorated graphs in \cite[Theorem 5.5]{falgas2016multicolour} and for multirelational graphs in \cite[Theorem 3]{alvarado2023limits}. Proposition \ref{lem:compact} can also be realized as a special case of the compactness result for the space of probability graphons \cite[Theorem 1.3]{abraham2025probability} (see Section \ref{sec:probabilitygraphons}). 
In Appendix \ref{sec:compactpf} we give a self-contained proof of Proposition \ref{lem:compact} that extends the arguments from the graphon setting to multiplexons. The main technical step is to ensure that there is a common partition of $[0, 1]$ for which all the layers of a multiplexon can be simultaneously approximated in the cut distance by a block multiplexon. This is achieved by considering the refinement of the partitions obtained by applying Szemer\'edi's regularity lemma on each of the layers. 
\end{remark} 

\section{ Sampling Random Multiplexes }
\label{sec:randommultiplex}

Analogous to the role of graphons in generating inhomogeneous random graphs, multiplexons provide a natural framework for sampling inhomogeneous random multiplex networks.

\begin{definition}[$\bm{W}$-random multiplexes] 
\label{defn:Wrandomlayers} 
Fix $r \geq 1$ and consider a disjointly decomposable multiplexon $\hat{\bm{W}} = ( \hat{W}_S)_{S \in \P([r])}$. Then one can generate a random $r$-multiplex $\bm{G}(n, \hat{\bm{W}})$, with vertex set $V(\bm{G}_n) = [n]$ and disjoint decomposition $([n], ((\hat{G}_n)_S)_{S \in \P_0([r])})$, as follows: 
\begin{itemize} 
\item Generate $\eta_1, \eta_2, \ldots, \eta_n$ i.i.d. $\mathrm{Unif}([0, 1])$. 
\item Independently for every pair $1 \leq i < j \leq n$, there exists a $\P([r])$-random element $S_{(i,j)}$ whose distribution is given by the probability measure $\sum_{S \in \P([r])} \hat{W}_S(\eta_i,\eta_j)\delta_S$. Then, $(i,j) \in E(\hat{H}_{S_{(i,j)}})$. This implies that
\begin{align}\label{eq:Wrandom}
\mathbb P((i, j) \in E((\hat{G}_n)_S)) = \hat{W}_S(\eta_i, \eta_j), \text{ for } S \in \P_0([r]), 
\end{align}
and $\hat{W}_{\emptyset} (\eta_i,\eta_j)$ is probability that the edge $(i, j)$ is not present in any layer. 
\end{itemize} 
With slight abuse of notation, we will refer to  $\bm{G}(n, \hat{\bm{W}})$ as a $\bm{W}$-random multiplex. 
\end{definition}

Note that any graphon $W$ has a disjoint decomposition with $r=1$ as follows: $\hat{\bm{W}} = (\hat W_{\emptyset}, \hat W_{\{1\}})$, where $\hat W_{\{1\}} := W$ and $\hat W_{\emptyset} := 1 - W$. Hence, when $r=1$, Definition \ref{defn:Wrandomlayers} becomes the well-known $W$-random graph model $G(n, W)$, where each edge $(i, j)$ is present with probability $W(\eta_i, \eta_j)$, independently for $1 \leq i < j \leq n$. 
%Definition \ref{} extends to multiplexes, where for each $S \in \P_0([r])$, the simple graph $(\hat{G}_n)_S$ marginally has the same distribution as a $\hat W_S$-random graph. 

\begin{remark}\label{remark:Wrandomlayerscumulative}
One can also consider the cumulative decomposition of $\bm{G}(n, \hat{\bm{W}})$, which takes the form: $([n], ((\widebar{G}_n)_S)_{S \in \P_0([r])})$, 
where, given $\eta_1, \eta_2, \ldots, \eta_n$ i.i.d. $\mathrm{Unif}([0, 1])$, 
\begin{align}\label{eq:Wcumulativerandom}
\mathbb P((i, j) \in E((\widebar{G}_n)_S)) =  \sum_{S' \supseteq S} \hat{W}_{S'}(\eta_i, \eta_j) :=  \widebar{W}_S(\eta_i, \eta_j) , \text{ for } S \in \P_0([r]) , 
\end{align}
independently for every pair $1 \leq i < j \leq n$. From \eqref{eq:dc} we know that $\widebar{\bm{W}} = ( \widebar{W}_S)_{S \in \P_0([r])}$ is a cumulatively decomposable multiplexon. 
Hence, given any cumulatively decomposable multiplexon $\widebar{\bm{W}} = ( \widebar{W}_S)_{S \in \P_0([r])}$, with slight abuse of notation we denote by $\bm{G}(n, \widebar{\bm{W}})$ the $r$-multiplex generated according to \eqref{eq:Wcumulativerandom}. Moreover, we will refer to both $\bm{G}(n, \widebar{\bm{W}})$ and $\bm{G}(n, \hat{\bm{W}})$ as $\bm{W}$-random multiplexes. 
%From \eqref{eq:cd} and \eqref{eq:dc} $\bm{G}(n, \widebar{\bm{W}})$ has the same distribution as $\bm{G}(n, \hat{\bm{W}})$. 
Notable examples of the $\bm{W}$-random multiplex are the homogeneous Erd\H{o}s-R\'enyi random multiplex (see Remark \ref{remark:Gnpparameters}) and the correlated stochastic block model (Example \ref{example:blockgraphon}). Variations of this model have also appeared recently in graph-matching literature \cite{racz2023matching,song2023independence,matchingdegree}. 
\end{remark}

An important special case of Definition \ref{defn:Wrandomlayers} is when all the layers of the multiplexon are constant functions. This generates the multiplex analogue of the classical Erd\H{o}s-R\'enyi random graph (recall Example \ref{example:random12}).

\begin{remark}[Erd\H{o}s-R\'enyi random multiplex]
\label{remark:Gnpparameters}
Consider a disjointly decomposable multiplexon $\hat{\bm{W}} = ( \hat{W}_S)_{S \in \P([r])}$, 
such that 
$$\hat{W}_S(x, y) = \hat{p}_S \in [0, 1], \quad \text{ for } x, y \in [0, 1],$$ 
where $(\hat{p}_S)_{S \in \P([r])}$ are non-negative constants satisfying $\sum_{S \in \P([r])} \hat{p}_S = 1$. Then the random $r$-multiplex $\bm{G}(n, \hat{\bm{W}})$, as generated in Definition \ref{defn:Wrandomlayers}, has disjoint decomposition $([n], ((\hat{G}_n)_S)_{S \in \P_0([r])})$ as follows: 
$$\mathbb P((i, j) \in E((\hat{G}_n)_S)) = \hat{p}_S, \text{ for } S \in \P_0([r]) , $$ 
independently for every pair $1 \leq i < j \leq n$. In other words, for each $S \in \P_0([r])$, the simple graph $(\hat{G}_n)_S$ is marginally distributed as an Erd\H{o}s-R\'enyi random graph $G(n,  \hat{p}_S)$. This random multiplex will be referred to as the {\it Erd\H{o}s-R\'enyi $r$-multiplex} and will be denoted as $G(n, (\hat{p}_S)_{S \in \P_0([r])})$. 
Alternatively, one can consider the cumulative decomposition of 
$G(n, \hat{\bm{W}})$ as  $([n], ((\widebar{G}_n)_{\{ S \}})_{S \in \P_0([r])})$, where, independently for $1 \leq i < j \leq n$,  
$$\mathbb P((i, j) \in E(\widebar{G}_n)_{\{S\}})) = \sum_{S' \supseteq S} \hat{p}_{S'} := \widebar{p}_{S}  ,  \text{ for } S \in \P_0([r]),$$
where the first equality follows from \eqref{eq:Wcumulativerandom}. Hence, an alternative parametrization of the Erd\H{o}s-R\'enyi $r$-multiplex is $G(n, (\widebar{p}_S)_{S \in \P_0([r])})$. For $r=2$, the cumulative decomposition of $G(n, \hat{\bm{W}})$ simplifies to 
$$([n], ((\widebar{G}_n)_{\{ 1 \}}, (\widebar{G}_n)_{\{2\}}, (\widebar{G}_n)_{\{ 1, 2 \}}),$$ where, independently for $1 \leq i < j \leq n$,  
\begin{align*}
\mathbb P((i, j) \in E(\widebar{G}_n)_{\{1\}})) & = \mathbb P((i, j) \text{ is an edge in layer 1}) = \hat{p}_{\{1\}} + \hat{p}_{\{1, 2\}} = \widebar{p}_{\{1\}}, \nonumber \\
\mathbb P((i, j) \in E(\widebar{G}_n)_{\{2\}})) & = \mathbb P((i, j) \text{ is an edge in layer 2}) = \hat{p}_{\{2\}} + \hat{p}_{\{1, 2\}} = \widebar{p}_{\{2\}} , \nonumber 
\end{align*} 
and 
$$\mathbb P((i, j) \in E(\widebar{G}_n)_{\{1, 2\}}))= \mathbb P((i, j) \text{ is an edge both layers 1 and 2}) = \hat{p}_{\{1, 2\}} = \widebar{p}_{\{1, 2\}}.$$
Hence, the relation with the parametrization in Example \ref{example:random12} is as follows: $p_1= \hat{p}_{\{1\}} + \hat{p}_{\{1, 2\}} = \widebar{p}_{\{1\}}$, $p_2= \hat{p}_{\{2\}} + \hat{p}_{\{1, 2\}}= \widebar{p}_{\{2\}}$, and $p_{12}= \hat{p}_{\{1, 2\}} = \widebar{p}_{\{1,2\}}$. 
\label{example:randomlayers}
\end{remark}

Next, we show that homomorphism densities of a cumulatively decomposable multiplexon $\widebar{\bm{W}}\in{\wsp}_{\blacktriangle}^{R-1}$ are given by containment probabilities in the random multiplex sampled from $\widebar{\bm{W}}$.

\begin{lemma}
\label{ppn:char}
Suppose $\bm{H}$ is an $r$-multiplex and $\widebar{\bm{W}}\in{\wsp}_{\blacktriangle}^{R-1}$ is a cumulatively decomposable multiplexon. Then, for any $n \geq |V(\bm H)|$,  
\[\PP(\bm{H} \subseteq \bm{G}(n, \widebar{\bm{W}})) = t(\hat{\bm{H}}, \widebar{\bm{W}}) , \] 
where $\bm{G}(n, \widebar{\bm{W}})$ is as defined in Remark \ref{remark:Wrandomlayerscumulative}. 
\end{lemma}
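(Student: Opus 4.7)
My plan is to condition on the latent uniform variables $\eta_1, \ldots, \eta_n$ from Definition~\ref{defn:Wrandomlayers}, exploit the fact that the disjoint decomposition of $\bm H$ partitions $\bigcup_{s} E(H_s)$ into edge sets indexed by $S \in \P_0([r])$, express the conditional containment probability as a product, and then take expectations to recognize the resulting integral as $t(\hat{\bm{H}}, \widebar{\bm{W}})$.

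Without loss of generality, identify $V(\bm{H})$ with $\{1, 2, \ldots, k\} \subseteq [n]$, where $k = |V(\bm{H})|$; by exchangeability of the $\eta_i$'s and by the independent sampling across distinct vertex pairs in Definition~\ref{defn:Wrandomlayers}, the containment probability depends only on $\bm{H}$ and not on $n \geq k$. By definition of containment of multiplexes, $\bm{H} \subseteq \bm{G}(n, \widebar{\bm{W}})$ is the event that every edge of $H_s$ is present in layer $s$ of $\bm{G}(n, \widebar{\bm{W}})$, for each $s \in [r]$. Using the disjoint decomposition $\hat{\bm{H}} = (\hat{H}_S)_{S \in \P_0([r])}$, every edge in $\bigcup_{s \in [r]} E(H_s)$ lies in a \emph{unique} $E(\hat{H}_S)$, namely the $S$ consisting of precisely those layers that contain it. Consequently, the containment event is equivalent to requiring, for every $S \in \P_0([r])$ and every $(i,j) \in E(\hat{H}_S)$, that the pair $(i,j)$ be present in layer $s$ for every $s \in S$, which by construction of the cumulative decomposition of $\bm{G}(n, \widebar{\bm{W}})$ is the event $(i,j) \in E((\widebar{G}_n)_S)$.

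Next, I would condition on $\eta_1, \ldots, \eta_k$. The pairwise disjointness of the edge sets $\{E(\hat{H}_S)\}_{S \in \P_0([r])}$, combined with the independent sampling across distinct pairs in Definition~\ref{defn:Wrandomlayers}, ensures that the events $\{(i,j) \in E((\widebar{G}_n)_S)\}_{(i,j) \in E(\hat{H}_S),\, S \in \P_0([r])}$ all concern pairwise distinct vertex pairs and are therefore conditionally mutually independent. By \eqref{eq:Wcumulativerandom}, each such event has conditional probability $\widebar{W}_S(\eta_i, \eta_j)$, so
\[
\PP\!\left(\bm{H} \subseteq \bm{G}(n, \widebar{\bm{W}}) \,\big|\, \eta_1, \ldots, \eta_k\right) = \prod_{S \in \P_0([r])} \prod_{(i,j) \in E(\hat{H}_S)} \widebar{W}_S(\eta_i, \eta_j).
\]
Taking expectation over $\eta_1, \ldots, \eta_k$ i.i.d.\ $\mathrm{Unif}([0,1])$ then gives
\[
\PP(\bm{H} \subseteq \bm{G}(n, \widebar{\bm{W}})) = \int_{[0,1]^{k}} \prod_{S \in \P_0([r])} \prod_{(i,j) \in E(\hat{H}_S)} \widebar{W}_S(x_i, x_j) \,\mathrm{d}x_1 \cdots \mathrm{d}x_k = t(\hat{\bm{H}}, \widebar{\bm{W}}),
\]
where the last equality is \eqref{eq:tHW} applied to the $(R-1)$-multiplex $\hat{\bm H}$ and the $(R-1)$-multiplexon $\widebar{\bm W}$. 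The only genuinely subtle step is the conditional independence: for a \emph{fixed} pair $(i,j)$, the events $\{(i,j) \in E((\widebar{G}_n)_S)\}_{S}$ are highly dependent (they are all determined by the single random set $S_{(i,j)}$), and it is precisely the disjointness of the $E(\hat{H}_S)$'s that prevents any pair $(i,j)$ from appearing in two different factors of the product above.
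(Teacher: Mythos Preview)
Your proposal is correct and follows essentially the same route as the paper: condition on the latent uniforms, factorize the conditional containment probability over edges using \eqref{eq:Wcumulativerandom}, and integrate. You are in fact more careful than the paper in two respects: you spell out why the containment event $\bm{H}\subseteq\bm{G}(n,\widebar{\bm W})$ is equivalent to $\{(i,j)\in E((\widebar{G}_n)_S)\text{ for all }(i,j)\in E(\hat H_S),\,S\in\P_0([r])\}$, and you explicitly note that conditional independence across the factors holds precisely because the edge sets $E(\hat H_S)$ are pairwise disjoint, so no pair $(i,j)$ is repeated in the product.
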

\begin{proof}
Suppose $\{\eta_1,\dots,\eta_n\}$ are i.i.d. $\mathrm{Unif}([0, 1])$ random variables. Denote by $\mathcal F_n$ the sigma algebra generated by the collection $\{\eta_1,\dots,\eta_n\}$. Then, from \eqref{eq:Wcumulativerandom}, 
\begin{align*}
\PP\left(\bm{H}\subseteq  \bm{G}(n, \widebar{\bm{W}}) \middle| \mathcal F_n \right) &= \prod_{S \in \P_0([r])}\prod_{e \in E(\hat{H}_S)} \PP \left(e \in E(\widebar{G}_S)\middle| \mathcal F_n \right) 
= \prod_{S \in \P_0([r])}\prod_{(i, j) \in E(\hat{H}_S)} \widebar{W}_S (\eta_i,\eta_j).
\end{align*}
Thus, 
\begin{align*}
\PP\left(\bm{H}\subseteq \bm{G}(n, \widebar{\bm{W}}) \right) 
%&= \ex{\prod_{S \in \P_0([r])}\prod_{(i, j) \in E(\hat{H}_S)} W_S(\eta_i,\eta_j)}\\
= \int_{[0,1]^{n}} \prod_{S \in \P_0([r])}\prod_{(i, j) \in E(\hat{H}_S)} \widebar{W}_S (x_i,x_j)\,\mathrm dx_1\,\dots\,\mathrm dx_n &= t(\hat{\bm{H}}, \widebar{\bm{W}}). 
\end{align*} 
This completes the proof. 
\end{proof}

Given a cumulative decomposable multiplexon $\widebar{\bm{W}} = (\widebar{W}_S)_{S \in \P_0([r])}$, denote by $\bm{G}(n, \widebar{\bm{W}})$ the random $r$-multiplex on $n$ vertices generated as in Remark \ref{remark:Wrandomlayerscumulative}. Then for any $r$-multiplexes $\bm{H} = (V(\bm H), H_1, H_2, \ldots, H_r)$, the injective homomorphsim density (recall  \eqref{eq:injectivetHGlayers} from Proposition \ref{proposition:gtow}) satisfies: 
\begin{align}\label{eq:thGrandomlayers}
\E[t_{\text{inj}}(\bm{H}, \bm{G}(n, \widebar{\bm{W}})] = t(\hat{\bm{H}}, \widebar{\bm{W}}) , 
\end{align}
and, from \eqref{eq:tHGnoninjective}, $\E[t(\bm{H}, \bm{G}(n, \widebar{\bm{W}})] = t(\hat{\bm{H}}, \widebar{\bm{W}}) + O(\frac{1}{n})$. Hence, by an application of the bounded difference inequality similar to the case of graphons (see \cite[Theorem 4.4.5]{zhao2023graph}), we can get 
\begin{equation}
\label{eq::tsamplebound}
\mathbb{P}( |t(\bm{H}, \bm{G}(n, \widebar{\bm{W}}) - t(\hat{\bm{H}}, \widebar{\bm{W}}) | \geq \varepsilon) \leq 2 e^{ -\frac{\varepsilon^2 n }{ 8 |V(\bm{H})| }} .
\end{equation}
This implies that the sequence $\{\bm{G}(n, \widebar{\bm{W}})\}_{ n \geq 1 }$ is left-convergent to the multiplexon $\widebar{\bm{W}}$ with probability 1. Further, the empirical graphon associated with $\bm{G}(n, \widebar{\bm{W}})$ converges in the cut distance to $\widebar{\bm{W}}$ with high probability. This is formalized in the following result. A more general version of this result for probability graphons is proved in \cite[Lemma 6.12]{abraham2025probability}.

\begin{proposition}
\label{ppn:complete}
Given $\widebar{\bm{W}}\in{\wsp}_{\blacktriangle}^{R-1}$, denote by $\widebar{\bm{W}}^{\bm{G_n}}$ the empirical graphon associated with the random $r$-multiplex $\bm{G_n} := \bm{G}(n, \widebar{\bm{W}})$. Then, for $n$ large enough, 
\begin{align}\label{eq:WGnW}
\delta_\square(\widebar{\bm{W}}^{\bm{G_n}}, \widebar{\bm W}) \leq \frac{32 R^{\frac{3}{2}}}{\sqrt{\log n}} ,  
\end{align} 
with probability at least  $1-e^{- \frac{ 2 R n}{\log n}}$. 
\end{proposition}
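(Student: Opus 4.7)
The plan is to adapt the standard proof of convergence of the empirical graphon of a $W$-random graph to $W$ in cut distance (see \cite[Section 11.4]{Lov12}) to the multiplex setting, crucially exploiting the fact that a single measure-preserving bijection of $[0,1]$—namely the one that sorts the latent variables $\eta_1, \ldots, \eta_n$ used to sample $\bm G_n$—can simultaneously witness the cut distance for all $R-1$ layers of the cumulative decomposition. The more general version of this result stated in \cite[Lemma 6.12]{abraham2025probability} for probability graphons supplies the template.

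First, I would approximate $\widebar{\bm W}$ by a block multiplexon. Fix an integer $k$ (to be tuned against $n$ and $R$) and partition $[0,1]$ into $k$ equal-length intervals $J_1, \ldots, J_k$. For each $S \in \P_0([r])$, let $\widebar{W}_{S,k}$ be the step function obtained by averaging $\widebar{W}_S$ over each rectangle $J_i \times J_j$. Since cumulative decomposability is preserved under averaging (the defining constraints in \eqref{eq:Wxycumulative} are linear), the resulting $\widebar{\bm W}_k \in {\wsp}_{\blacktriangle}^{R-1}$. By the Frieze--Kannan weak regularity lemma applied to each layer, $\|\widebar{W}_S - \widebar{W}_{S,k}\|_\square \leq C_1/\sqrt{\log k}$, which sums to $\|\widebar{\bm W} - \widebar{\bm W}_k\|_\square \leq C_1 R / \sqrt{\log k}$.

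Second, condition on $\eta_1, \ldots, \eta_n$ and take $\sigma \in \mpt$ to be the measure-preserving bijection that sorts them, sending the block of roughly $n/k$ consecutive order statistics $\eta_{((i-1)n/k+1)}, \ldots, \eta_{(in/k)}$ into $J_i$. Writing $n_i = |\{j : \eta_j \in J_i\}|$, Hoeffding's inequality gives that all $n_i$ concentrate around $n/k$ on a high-probability event. Conditional on the $\eta_j$'s, within each block pair $(i,j)$ and each layer $S \in \P_0([r])$, the edges of $\widebar{G}_S$ are independent Bernoullis whose means average to $\widebar{W}_{S,k}|_{J_i \times J_j}$. A second Hoeffding bound forces the observed edge density in each block to concentrate around this mean up to $O(\sqrt{R \log n / (n_i n_j)})$; a union bound over the $k^2(R-1)$ block-layer pairs ensures all these deviations hold simultaneously, except on an event of probability at most $e^{-2Rn/\log n}$, for an appropriate choice of $k$.

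Finally, for step multiplexons on a common $k \times k$ partition, the cut norm is bounded by the sum of absolute blockwise deviations, so the blockwise concentration converts into the bound $\|\widebar{W}_S^{\bm G_n} - \widebar{W}_{S,k}^{\sigma}\|_\square \leq C_2 \sqrt{R / \log n}$ per layer. Summing over the $R-1$ layers and invoking the triangle inequality then yields
\[
\delta_\square(\widebar{\bm W}^{\bm G_n}, \widebar{\bm W}) \leq \|\widebar{\bm W}^{\bm G_n} - \widebar{\bm W}_k^{\sigma}\|_\square + \|\widebar{\bm W}_k^{\sigma} - \widebar{\bm W}^{\sigma}\|_\square \leq \frac{32 R^{3/2}}{\sqrt{\log n}},
\]
after tracking constants and balancing $k$ against both the regularity error $R/\sqrt{\log k}$ and the concentration error per layer. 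The principal obstacle is that all $R-1$ layers must be controlled by the \emph{same} bijection $\sigma$—treating layers independently would only bound coordinatewise cut distances rather than the multiplexon cut distance $\delta_\square$. The sort-by-$\eta_i$ bijection is layer-agnostic, so the coupling across layers comes essentially for free; the price is the factor of $R$ from summing layerwise cut norms, compounded with a factor of $\sqrt{R}$ from the per-layer concentration (after union-bounding across layers), which together produce the $R^{3/2}$ dependence.
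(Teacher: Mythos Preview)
Your overall strategy---use a single layer-agnostic witness to control all $R-1$ layers simultaneously---is right in spirit, but your first step contains a genuine error. You fix the partition of $[0,1]$ into $k$ equal-length intervals $J_1,\dots,J_k$ and then claim that averaging $\widebar W_S$ over this partition yields $\|\widebar W_S - \widebar W_{S,k}\|_\square \leq C_1/\sqrt{\log k}$ ``by Frieze--Kannan''. This is false: weak regularity asserts the \emph{existence} of some equipartition with this property, but the partition depends on the graphon and need not consist of consecutive intervals. A rank-one checkerboard $W(x,y)=\tfrac12(1+(-1)^{\lfloor 2^m x\rfloor+\lfloor 2^m y\rfloor})$ with $k\ll 2^m$ already fails: its interval average is $\approx\tfrac12$, yet $\|W-\tfrac12\|_\square\geq \tfrac18$. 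Since your sort-by-$\eta_i$ bijection in step two relies on the block approximation being aligned with consecutive intervals, this gap propagates through the whole argument.

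The paper's proof handles this correctly and in a way that does \emph{not} require the partition to consist of intervals. It applies weak regularity to each layer separately to obtain partitions $\Pi_S$, takes the common refinement, and uses \cite[Lemma~9.15]{Lov12} to pass to a common equipartition $\Pi$ into $L^R$ measurable sets that works for every layer at once. Because $\Pi$ is not made of intervals, the paper does not use a sorting bijection; instead it notes that the sampled step function $(\widebar W_S^\Pi)^{(n)}$ and $\widebar W_S^\Pi$ take identical values on corresponding steps and differ only in the step widths, so their cut distance is controlled by $\sum_s|\Delta_s|$ where $\Delta_s$ is the fluctuation of the empirical class sizes. The chain $\widebar{\bm W}^{\bm G_n}\to\widebar{\bm W}^{(n)}\to(\widebar{\bm W}^\Pi)^{(n)}\to\widebar{\bm W}^\Pi\to\widebar{\bm W}$ is then bounded in expectation, and the high-probability statement comes from Azuma applied to the $1$-Lipschitz function $\tfrac{n}{R}\delta_\square(\widebar{\bm W}^{\bm G_n},\widebar{\bm W})$, rather than from a direct blockwise-Hoeffding union bound as you propose.
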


The proof of this result is given in Section \ref{sec:randomconvergence}. The proof follows the arguments for the analogous result in the graphon setting (see \cite[Lemma 10.16]{Lov12}), with necessary modifications required to ensure that there is a common permutation/partition for which all the layers of $\widebar{\bm{W}}^{\bm{G_n}}$ and $\widebar{\bm W}$ are simultaneously close. 

%
%\begin{conjecture}
%\label{conj:pp''complete}
%Given a continuous cumulatively decomposable multiplexon $\bm{h} \in {\wsp}_{\blacktriangle}^{R-1}$ and a sequence of multiplexes $\bm{G}_n''$ on a common probability space such that $\PP(\bm{G}''_n \in \cdot) = \PP''_{n,\bm{h}}(\bm{G}\in \cdot)$ for every $n \in \N$, $\bm{h}^{\bm{G}''_n} \to \bm{h}$ in $L^{1}({\wsp}_{\blacktriangle}^{R-1})$. 
%\end{conjecture} 
%

\section{Counting Lemma for Multiplexons}
\label{sec:tHW}

One of the fundamental results in graph limit theory is that the map $W \rightarrow t(H, W)$, for $W \in \wsp$, is continuous in the cut distance. This result is a consequence of the counting lemma, which provides a bound on the difference between the homomorphism densities of two graphons in terms of their cut distance (see, for example, \cite[Lemma 10.23]{Lov12} and \cite[Theorem 4.5.1]{zhao2023graph}). The following result establishes the version of this result for multiplexons. The proof relies on a telescoping argument, similar to the one used in the graphon setting. Alternatively, one can also derive this result from the counting lemma for $\wsp$-decorated graphs (see Section \ref{sec:densityprobabilitygraphon} for details).

\begin{lemma}
\label{lem:cont}
For any $r$-multiplex $\bm{H}$, with disjoint decomposition  $\hat {\bm H} = (V(\bm{H}), (\hat H_S)_{S \in \P_0([r])})$,  and any two cumulatively decomposable multiplexons $\widebar{\bm U}, \widebar{\bm W}  \in {\wsp}_{\blacktriangle}^{R-1}$, 
$$\left|t(\hat{\bm{H}},\widebar{\bm{U}}) - t(\hat{\bm{H}},\widebar{\bm{W}})\right| \leq |E(\hat{\bm{H}})| \| \widebar{\bm U} - \widebar{\bm W} \|_{\square} , $$ 
where $E(\hat{\bm{H}}) := \bigcup_{S \in \P_0([r])} E(\hat H_S)$. Consequently, the map ${\wsp}_{\blacktriangle}^{R-1} \ni \widebar{\bm{W}} \mapsto t(\hat{\bm{H}}, \widebar{\bm{W}})$ is continuous in the metric $\delta_\square$  (as defined in \eqref{eq:UWdeltalayer}).
\end{lemma}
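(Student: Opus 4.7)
The plan is to adapt the classical graphon counting lemma (see, e.g., \cite[Theorem 4.5.1]{zhao2023graph}) via an edge-by-edge telescoping argument carried out over the edges of the disjoint decomposition $\hat{\bm{H}}$. First, enumerate the edges as $e_1 = (i_1, j_1), \ldots, e_m = (i_m, j_m)$ with associated labels $S_1, \ldots, S_m \in \P_0([r])$, where $m = |E(\hat{\bm{H}})|$ and each $e_\ell \in E(\hat{H}_{S_\ell})$; the label $S_\ell$ is unambiguous since the decomposition has pairwise disjoint edge sets (Definition \ref{def:decomp}). Next define the hybrid integrand
\[
F_\ell(x_1, \ldots, x_{|V(\bm{H})|}) = \prod_{k \leq \ell} \widebar{U}_{S_k}(x_{i_k}, x_{j_k}) \prod_{k > \ell} \widebar{W}_{S_k}(x_{i_k}, x_{j_k}),
\]
so that $\int F_0 = t(\hat{\bm{H}}, \widebar{\bm{W}})$ and $\int F_m = t(\hat{\bm{H}}, \widebar{\bm{U}})$. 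By the triangle inequality, the task reduces to estimating each increment $|\int F_\ell - \int F_{\ell-1}|$.

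The heart of the proof will be the single-edge estimate $|\int F_\ell - \int F_{\ell-1}| \leq \|\widebar{U}_{S_\ell} - \widebar{W}_{S_\ell}\|_{\square}$. Writing $e_\ell = (u, v)$, I would fix the variables $x_k$ for $k \in V(\bm{H}) \setminus \{u, v\}$ and observe that, conditional on this fixing, the remaining integrand over $(x_u, x_v)$ factorizes as
\[
(\widebar{U}_{S_\ell} - \widebar{W}_{S_\ell})(x_u, x_v) \cdot c \cdot f(x_u) \cdot g(x_v),
\]
where $c \in [0,1]$ aggregates factors from edges of $\hat{\bm{H}}$ avoiding $\{u, v\}$, and $f, g \colon [0,1] \to [0,1]$ aggregate factors from edges incident to exactly one of $u, v$. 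This decoupling is legitimate because $(u, v)$ is the only edge of $\hat{\bm{H}}$ between $u$ and $v$: every unordered pair lies in at most one $E(\hat{H}_S)$ by disjointness, and each $\hat{H}_S$ is simple. The $f, g \in [0,1]$ formulation of the cut norm in \eqref{eq:fgST} then bounds the inner integral over $(x_u, x_v)$ by $\|\widebar{U}_{S_\ell} - \widebar{W}_{S_\ell}\|_{\square}$, and integrating $c$ against the remaining variables costs at most a factor of one.

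Summing these increments and regrouping by $S$ yields
\[
|t(\hat{\bm{H}}, \widebar{\bm{U}}) - t(\hat{\bm{H}}, \widebar{\bm{W}})| \leq \sum_{S \in \P_0([r])} |E(\hat{H}_S)| \, \|\widebar{U}_S - \widebar{W}_S\|_{\square} \leq |E(\hat{\bm{H}})| \, \|\widebar{\bm{U}} - \widebar{\bm{W}}\|_{\square},
\]
where the final inequality uses $|E(\hat{H}_S)| \leq |E(\hat{\bm{H}})| = \sum_{S'} |E(\hat{H}_{S'})|$ together with the definition \eqref{eq:UWsquarelayer}. Continuity in $\delta_\square$ will then follow from invariance of $t(\hat{\bm{H}}, \cdot)$ under measure-preserving relabeling: applying the bound to $\widebar{\bm{W}}^{\sigma}$ for arbitrary $\sigma \in \mpt$ and taking the infimum gives $|t(\hat{\bm{H}}, \widebar{\bm{U}}) - t(\hat{\bm{H}}, \widebar{\bm{W}})| \leq |E(\hat{\bm{H}})| \, \delta_\square(\widebar{\bm{U}}, \widebar{\bm{W}})$. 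The main delicate step is the rank-one factorization in the single-edge estimate: without the tensor-product form $f(x_u) g(x_v)$, one would be forced to invoke the weaker $L^\infty \to L^1$ operator norm and incur an extraneous constant factor of $4$; recognizing that the simple-graph structure of $\hat{\bm{H}}$ enforces this factorization is what keeps the constant in the bound equal to $|E(\hat{\bm{H}})|$.
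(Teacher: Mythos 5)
Your proposal is correct and follows essentially the same telescoping strategy as the paper's proof: enumerate the edges of the disjoint decomposition $\hat{\bm{H}}$, write the difference as a telescoping sum over single-edge swaps, and bound each increment by a cut-norm term using the $f,g \in [0,1]$ formulation of $\|\cdot\|_\square$ from \eqref{eq:fgST}. Your write-up is in fact somewhat more explicit than the paper's in one useful respect: you spell out that the disjointness of the sets $E(\hat{H}_S)$ makes $E(\hat{\bm{H}})$ a simple graph, so after fixing all variables other than $x_u, x_v$ the residual integrand genuinely factorizes as $c\, f(x_u)\, g(x_v)$ with $c,f,g \in [0,1]$, which is exactly what makes the bound by $\|\cdot\|_\square$ (rather than by $\|\cdot\|_{\infty\to 1}$, which would carry a factor $4$) legitimate. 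The paper simply cites \eqref{eq:fgST} at the corresponding step without unpacking this factorization; your note on why the tensor-product form is available is a fair point. The final regrouping $\sum_\ell \|\widebar{U}_{S_\ell}-\widebar{W}_{S_\ell}\|_\square = \sum_S |E(\hat{H}_S)|\,\|\widebar{U}_S-\widebar{W}_S\|_\square \le |E(\hat{\bm{H}})|\,\|\widebar{\bm{U}}-\widebar{\bm{W}}\|_\square$ and the passage to $\delta_\square$ by infimizing over $\sigma\in\mpt$ both match the paper.
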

\begin{proof} Suppose the vertices of $\bm{H}$ are labeled $V(\bm{H})= \{1, 2, \ldots, |V(\bm{H})|\}$. Note that for each $S \in \P_0([r])$, $\hat H_S = (V(\bm{H}), E(\hat H_S))$ is a simple graph and the collection of edge sets $E(\hat H_S)_{S \in \P_0([r])}$ are disjoint. Then for each edge $e \in E(\hat{\bm{H}})$, there is a unique subset $S_e \in \P_0([r])$ such that $e \in E(\hat H_{S_e})$. 
For any $e = (u, v) \in E(\hat{\bm{H}})$, with $1 \leq u < v \leq |V(\bm H)|$ and a vector $\bm x = (x_1, x_2, \ldots, x_{|V(\bm H)|}) \in [0, 1]^{|V(\bm H)|}$ we denote by $\bm x_e = (x_{u}, x_{v})$. Then, for any two cumulatively decomposable multiplexons $\widebar{\bm U}, \widebar{\bm W}  \in {\wsp}_{\blacktriangle}^{R-1}$, 
\begin{align}
&\left|t(\hat{\bm{H}}, \widebar{\bm{U}}) - t(\hat{\bm{H}}, \widebar{\bm{W}})\right| \nonumber \\
&= \left|\int_{[0,1]^{|V(\bm{H})|}} \left(\prod_{S \in \P_0([r])} \prod_{(i, j) \in E(\hat{H}_S)} \widebar{U}_{S}(x_i,x_j) - \prod_{S \in \P_0([r])} \prod_{(i, j) \in E(\hat{H}_S)} \widebar{W}_{S}(x_i,x_j)\right)\,\mathrm dx_1\,\mathrm dx_2\,\dots\,\mathrm dx_{|V(\bm{H})|}\right| \nonumber \\
&= \left|\int_{[0,1]^{|V(\bm{H})|}} \left(\prod_{ e \in E(\hat{\bm H}) } \widebar{U}_{S_e}( \bm x_{e}) - \prod_{ e \in E(\hat{\bm H}) } \widebar{W}_{S_e}( \bm x_{e} )\right)\,\mathrm dx_1\,\mathrm dx_2\,\dots\,\mathrm dx_{|V(\bm{H})|}\right| , 
\label{eq:differencedecomposable}
\end{align}
where $\bm U = (\widebar{U}_S)_{S \in \P_0([r])}$ and  $\bm W = (\widebar{W}_S)_{S \in \P_0([r])}$ are the  cumulative decompositions of $\bm U$ and $\bm W$, respectively. 

Now, let $\{e_1,\dots, e_{|E(\hat{\bm{H}})|}\}$ be an enumeration of the edges in $\hat{\bm{H}}$. Then using a telescoping sum,
\begin{align*}
& \prod_{ e \in E(\hat{\bm H}) } \widebar{U}_{S_e}( \bm x_{e}) - \prod_{ e \in E(\hat{\bm H}) } \widebar{W}_{S_e}( \bm x_{e} ) \\
&\defeq \sum_{a=1}^{|E(\hat{\bm{H}})|} \left(\prod_{b =1}^{a-1} \widebar{U}_{S_{e_b}}(\bm{x}_{e_b})\right) \left(\prod_{b=a+1}^{|E(\hat{\bm{H}})|} \widebar{W}_{S_{e_b}}(\bm{x}_{e_b}) \right) \left(\widebar{U}_{S_{e_a}}(\bm x_{e_a}) - \widebar{W}_{S_{e_a}}(\bm x_{e_a})\right) .
%&=\sum_{k=1}^{m} Y_1^{k}(x_{\ell}: \ell \neq i_k)Y_2^{k}(x_{\ell}: \ell\neq j_k)\left(h\lyr{S_k}_1(x_{i_e},x_{j_e}) - h\lyr{S_k}_2(x_{i_e},x_{j_e})\right).
\end{align*} 
Note that $\prod_{b =1}^{a-1} \widebar{U}_{S_{e_b}}(\bm{x}_{e_b}) \leq 1$ and $\prod_{b=a+1}^{|E(\hat{\bm{H}})|} \widebar{W}_{S_{e_b}}(\bm{x}_{e_b}) \leq 1$. Hence, from \eqref{eq:differencedecomposable}, by the triangle inequality, and \eqref{sqinfmet}, 
\begin{align} 
\left|t(\hat{\bm{H}}, \widebar{\bm{U}}) - t(\hat{\bm{H}}, \widebar{\bm{W}})\right| 
%& \leq \sum_{a=1}^{|E(\hat{\bm{H}})|}  \Bigg|\int_{[0,1]^{|V(\bm H)|-2}}\int_{[0,1]^{2}}  \left(\prod_{b =1}^{a-1} \widebar{U}_{S_{e_b}}(\bm{x}_{e_b})\right) \left(\prod_{b=a+1}^{|E(\hat{\bm{H}})|} \widebar{W}_{S_{e_b}}(\bm{x}_{e_b}) \right) \left(\widebar{U}_{S_{e_a}}(\bm x_{e_a}) - \widebar{W}_{S_{e_a}}(\bm x_{e_a})\right) \tr{\mathrm d\bm{x}_{e_a}\,\prod_{i \notin e_a} \mathrm d\bm{x}_i}\Bigg| \nonumber \\ 
%&\leq \sum_{k=1}^{m} \left|\int_{[0,1]^{[n]\setminus \{i_k,j_k\}}}\sup_{-1\leq g_1,g_2\leq 1}\int_{[0,1]^{2}}g_1(x)g_2(y)\left(h\lyr{S_k}_1(x,y) - h\lyr{S_k}_2(x,y)\right)\,\mathrm dx\,dy\,\mathrm dx_{[n]\setminus\{i_k,j_k\}}\right| \\
%&\leq \sum_{k=1}^{m} \left|\int_{[0,1]^{[n]\setminus \{i_k,j_k\}}}\|h\lyr{S_k}_1 - h\lyr{S_k}_2\|_{\square,\infty}\,\mathrm dx_{[n]\setminus\{i_k,j_k\}}\right| \\
&\leq \sum_{a=1}^{|E(\hat{\bm{H}})|} \| \widebar{U}_{S_{e_a}} - \widebar{W}_{S_{e_a}} \|_{ \square }  \tag*{ (recall \eqref{eq:fgST}) }\nonumber \\  
%&\leq 4 \sum_{a=1}^{|E(\hat{\bm{H}})|} \| \widebar{U}_{S_{e_a}} - \widebar{W}_{S_{e_a}} \|_{\square } \tag*{(recall \ref{eq:UWinftysquare})} \nonumber \\  
& \leq |E(\hat{\bm{H}})| \| \widebar{\bm U} - \widebar{\bm W} \|_{\square} , 
\label{eq:tUWsquare} 
\end{align}
where the last step uses \eqref{eq:UWsquarelayer}. 

Since $t(\bm H, \bm W^\sigma) = t(\bm H, \bm W)$, for any $\sigma \in \F$, the continuity of $t(\hat{\bm{H}},\cdot)$ follows from \eqref{eq:tUWsquare} and recalling \eqref{eq:UWdeltalayer}. 
\end{proof}

Next, we establish a converse of the above result for multiplexes. This is referred to as the inverse counting lemma, which shows that if two multiplexons are close in terms of their homomorphism densities, then they are also close in terms of their cut distance. The proof 
is an easy extension of the arguments in the graphon setting \cite[Lemma 10.32]{Lov12}), combined with Proposition \ref{ppn:complete}.

\begin{lemma}
\label{lem:Wcounting} 
Fix $r, K \geq 1$ and two cumulatively decomposable multiplexons $\widebar{\bm U}, \widebar{\bm W}  \in {\wsp}_{\blacktriangle}^{R-1}$. Suppose for any $r$-multiplex $\bm{H}$, with $|V(\bm{H})| \leq K$, the following holds: 
\begin{align}\label{eq:tHUWinverse}
\left|t(\hat{\bm{H}},\widebar{\bm{U}}) - t(\hat{\bm{H}},\widebar{\bm{W}})\right| \leq \frac{1}{2^{rK^2} } ,  
\end{align}
where $\hat {\bm H} = (V(\bm{H}), (\hat H_S)_{S \in \P_0([r])})$ is the disjoint decomposition of $\bm{H}$. Then $$\delta_\square(\widebar{\bm{U}}, \widebar{\bm{W}}) \leq \frac{ 64 R^\frac{3}{2} }{\sqrt{\log K}}.$$  
\end{lemma}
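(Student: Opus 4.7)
Following the template of the graphon inverse counting lemma \cite[Lemma 10.32]{Lov12}, my plan is to draw two $\bm{W}$-random multiplexes $\bm{G}_K^U := \bm{G}(K, \widebar{\bm U})$ and $\bm{G}_K^W := \bm{G}(K, \widebar{\bm W})$ on $K$ vertices, couple them to coincide with high probability using the hypothesis on homomorphism densities, and then close the argument by combining this coupling with Proposition \ref{ppn:complete} and the triangle inequality. (For $K$ below an absolute constant depending only on $R$ the bound is trivial, since $\delta_\square(\widebar{\bm U}, \widebar{\bm W}) \leq R$ whenever $\widebar{\bm U}, \widebar{\bm W} \in {\wsp}_{\blacktriangle}^{R-1}$, so I may assume $K$ is as large as needed below.)

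\textbf{Total variation bound.} The key step is to control the distributions of $\bm G_K^U$ and $\bm G_K^W$ on the finite set of labeled $r$-multiplexes on $[K]$. Fix a labeled $r$-multiplex $\bm F$ on $[K]$ and let $S_e(\bm F) \in \P([r])$ denote the layer set of each edge $e \in \binom{[K]}{2}$ in $\bm F$. From Definition \ref{defn:Wrandomlayers},
\[\PP(\bm{G}_K^W = \bm F) = \int_{[0,1]^K} \prod_{e \in \binom{[K]}{2}} \hat{W}_{S_e(\bm F)}(x_e)\, dx_1 \cdots dx_K,\]
and analogously for $\widebar{\bm U}$. Substituting the identity $\hat{W}_S = \sum_{S' \supseteq S}(-1)^{|S' \setminus S|}\widebar{W}_{S'}$ from \eqref{eq:cd} (using the convention $\widebar{W}_\emptyset \equiv 1$ to cover $S = \emptyset$) and expanding the product yields an alternating sum over edge-wise assignments $\{S_e'\}_{e}$ with $S_e' \supseteq S_e(\bm F)$. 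There are at most $R^{\binom{K}{2}} = 2^{r\binom{K}{2}}$ such assignments, and each resulting integral is exactly $t(\hat{\bm F'}, \widebar{\bm W})$ for the $r$-multiplex $\bm F'$ on $[K]$ with layer assignment $\{S_e'\}_{e}$. Applying hypothesis \eqref{eq:tHUWinverse} (available since $|V(\bm F')| = K$) term by term and using the triangle inequality,
\[\left|\PP(\bm G_K^U = \bm F) - \PP(\bm G_K^W = \bm F)\right| \leq 2^{r\binom{K}{2}} \cdot 2^{-rK^2}.\]
Since there are at most $2^{r\binom{K}{2}}$ labeled $r$-multiplexes on $[K]$, summing yields
\[d_{TV}(\bm{G}_K^U, \bm{G}_K^W) \leq \tfrac{1}{2} \cdot 2^{2r\binom{K}{2} - rK^2} = 2^{-rK-1},\]
using $2\binom{K}{2} = K^2 - K$.

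\textbf{Coupling and conclusion.} By the coupling characterization of total variation I can realize $\bm G_K^U$ and $\bm G_K^W$ on a common probability space so they coincide with probability at least $1 - 2^{-rK-1}$; on this event the two empirical multiplexons $\widebar{\bm W}^{\bm G_K^U}$ and $\widebar{\bm W}^{\bm G_K^W}$ are identical. In parallel, Proposition \ref{ppn:complete} applied to each sample gives, with probability at least $1 - 2e^{-2RK/\log K}$,
\[\delta_\square(\widebar{\bm W}^{\bm{G}_K^U}, \widebar{\bm U}) \leq \frac{32 R^{3/2}}{\sqrt{\log K}} \quad \text{and} \quad \delta_\square(\widebar{\bm W}^{\bm{G}_K^W}, \widebar{\bm W}) \leq \frac{32 R^{3/2}}{\sqrt{\log K}}.\]
For $K$ sufficiently large the two events have positive joint probability, and since $\delta_\square(\widebar{\bm U}, \widebar{\bm W})$ is deterministic, the triangle inequality applied through the common empirical multiplexon delivers the claimed bound $64 R^{3/2}/\sqrt{\log K}$.

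\textbf{Main obstacle.} The principal technical challenge lies in the inclusion-exclusion bookkeeping within the total variation step: confirming that every term of the expansion is the homomorphism density of a genuine $r$-multiplex on $K$ vertices (so that \eqref{eq:tHUWinverse} applies), and verifying that the combinatorial prefactor $2^{2r\binom{K}{2}}$ — from counting both labeled multiplexes and their dominating supersets — balances exactly against the allotted slack $2^{-rK^2}$ in the hypothesis. This balance is the precise reason for the scaling $2^{-rK^2}$ in the hypothesis and relies on the bijection between disjoint and cumulative decompositions encoded in \eqref{eq:cd}--\eqref{eq:dc}.
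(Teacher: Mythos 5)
Your proof is correct and follows essentially the same route as the paper: bound the total variation distance between $\bm{G}(K,\widebar{\bm U})$ and $\bm{G}(K,\widebar{\bm W})$ using the hypothesis, couple, and conclude via Proposition \ref{ppn:complete} and the triangle inequality. The only cosmetic difference is in the total variation step: the paper invokes Lemma \ref{ppn:char} to rewrite each $t(\hat{\bm H},\widebar{\bm W})$ as a containment probability $\PP(\bm H\subseteq\bm{G}(K,\widebar{\bm W}))$ and then applies inclusion--exclusion at the event level, while you expand the point probability $\PP(\bm{G}_K^W=\bm F)$ directly as an integral over the latent variables and apply the M\"obius relation \eqref{eq:cd} edge-by-edge; these are the same alternating-sum computation viewed on opposite sides of Lemma \ref{ppn:char}, and your edge-wise bookkeeping (at most $R$ supersets per edge, hence $R^{\binom{K}{2}}=2^{r\binom{K}{2}}$ terms, each identified as a homomorphism density $t(\hat{\bm F'},\widebar{\bm W})$ of a $K$-vertex multiplex) correctly reproduces the paper's estimate $2^{r\binom{K}{2}}\cdot 2^{-rK^2}$. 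Your explicit remark that the claim is trivial for bounded $K$ (since $\delta_\square\leq R-1$ always, while $64R^{3/2}/\sqrt{\log K}\geq R$ unless $K$ is exponentially large in $R$) is a sound way to handle the ``for $K$ large enough'' step in the coupling argument, which the paper treats somewhat more casually.
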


\begin{proof} The condition in \eqref{eq:tHUWinverse} and Lemma \ref{ppn:char} imply that 
$$|\PP(\bm{H} \subseteq \bm{G}(K, \widebar{\bm{U}}) )- \PP(\bm{H} \subseteq \bm{G}(K, \widebar{\bm{W}}) ) | \leq \frac{1}{2^{rK^2} } , $$ 
for any $r$-multiplex $\bm{H}$, with $|V(\bm{H})| = K$. Hence, by the inclusion-exclusion principle, 
$$|\PP(\bm{G}(K, \widebar{\bm{U}}) = \bm{H} ) - \PP( \bm{G}(K, \widebar{\bm{W}}) = \bm{H})| \leq \frac{2^{r {K \choose 2}}}{2^{rK^2} } = \frac{1}{2^{r { {K+1} \choose 2} }} , $$ 
for any $r$-multiplex $\bm{H}$, with $|V(\bm{H})| = K$. Consequently, 
\begin{align} 
\mathsf{TV}\left( \bm{G}(K, \widebar{\bm{U}}) , \bm{G}(K, \widebar{\bm{W}}) \right) 
& = \frac{1}{2} \sum_{\bm{H} : |V(\bm{H})| = K} |\PP(\bm{G}(K, \widebar{\bm{U}}) = \bm{H} ) - \PP( \bm{G}(K, \widebar{\bm{W}}) = \bm{H} ) | \nonumber \\ 
& \leq \frac{2^{r {K \choose 2 } }}{2^{r { {K+1} \choose 2} }}   = \frac{1}{2^{rK}} < 1- 2 e^{-\frac{2 r K}{\log K}} , \nonumber 
\end{align} 
This means we can couple  $\bm{G}(K, \widebar{\bm{U}})$ and $\bm{G}(K, \widebar{\bm{W}})$ such that  $\bm{G}(K, \widebar{\bm{U}}) = \bm{G}(K, \widebar{\bm{W}})$ with probability at least $2 e^{-\frac{2 r K}{ \log K}} $. Now, denote the empirical graphons associated with $\bm{G}(K, \widebar{\bm{U}})$ and $\bm{G}(K, \widebar{\bm{W}})$ by $\widebar{\bm{U}}^{(K)}$ and $\widebar{\bm{W}}^{(K)}$, respectively. Then by Proposition \ref{ppn:complete}, with probability at least $1- 2 e^{-\frac{2 R K}{\log K}}$, 
$$\delta_{\square}(\widebar{\bm{U}}^{(K)}, \widebar{\bm{U}}) \leq \frac{32 R^\frac{3}{2}}{\sqrt{\log K}}  \text{ and } \delta_{\square}(\widebar{\bm{W}}^{(K)}, \widebar{\bm{W}}) \leq \frac{32 R^\frac{3}{2} }{\sqrt{\log K}}.$$
Hence, the above estimates and the coupling such that $\bm{G}(K, \widebar{\bm{U}}) = \bm{G}(K, \widebar{\bm{W}})$ hold with positive probability. This implies, 
$$\delta_{\square}(\widebar{\bm{U}}, \widebar{\bm{W}}) \leq \delta_{\square}(\widebar{\bm{U}}^{(K)}, \widebar{\bm{U}}) + \delta_{\square}(\widebar{\bm{W}}^{(K)}, \widebar{\bm{W}}) \leq \frac{64 R^\frac{3}{2}}{\sqrt{\log K}},$$
which completes the proof of the lemma. 
\end{proof}

Two graphons $U, W \in \wsp$ are said to be {\it weakly isomorphic} if $t(H, U) = t(H, W)$, for all graphs $H$ (without multiple edges or self loops). Analogously, two cumulatively decomposable multiplexons $\widebar{\bm{U}}, \widebar{\bm{W}} \in \wsp^{R-1}_\blacktriangle$ are said to be {\it weakly isomorphic} if $t(\hat{\bm{H}}, \widebar{\bm{U}}) = t(\hat{\bm{H}}, \widebar{\bm{W}})$, for all multiplex networks $\bm{H}$ (where the graphs in each of the layers do not have multiple edges or self loops). The counting lemma and the inverse counting lemma then immediately imply the following result. The analogous result for graphons is given in \cite[Corollary 3.10]{graph_limits_I} (see also \cite[Corollary 10.34]{Lov12}).

\begin{corollary}\label{cor:deltatUW}
Two cumulatively decomposable multiplexons $\widebar{\bm{U}}, \widebar{\bm{W}} \in \wsp^{R-1}_\blacktriangle$ are weakly isomorphic if and only if $\delta_{\square}( \widebar{\bm{U}}, \widebar{\bm{W}} ) = 0$. 
\end{corollary}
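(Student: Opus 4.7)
The plan is to derive both directions from the counting lemma (Lemma \ref{lem:cont}) and its inverse (Lemma \ref{lem:Wcounting}); the statement is essentially the assertion that these two lemmas are sharp enough to detect weak isomorphism at the level of the cut distance.

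For the forward direction, suppose $\delta_{\square}(\widebar{\bm{U}}, \widebar{\bm{W}}) = 0$. By the definition of $\delta_{\square}$ in \eqref{eq:UWdeltalayer}, there exists a sequence $\{\sigma_n\}_{n \geq 1} \subseteq \mpt$ with $\|\widebar{\bm{U}} - \widebar{\bm{W}}^{\sigma_n}\|_{\square} \to 0$. Fix any $r$-multiplex $\bm{H}$ with disjoint decomposition $\hat{\bm{H}}$. By the counting lemma applied to $\widebar{\bm{U}}$ and $\widebar{\bm{W}}^{\sigma_n}$,
\[
\left|t(\hat{\bm{H}}, \widebar{\bm{U}}) - t(\hat{\bm{H}}, \widebar{\bm{W}}^{\sigma_n})\right| \leq |E(\hat{\bm{H}})|\,\|\widebar{\bm{U}} - \widebar{\bm{W}}^{\sigma_n}\|_{\square} \longrightarrow 0.
\]
Since homomorphism densities are invariant under measure-preserving bijections (that is, $t(\hat{\bm{H}}, \widebar{\bm{W}}^{\sigma_n}) = t(\hat{\bm{H}}, \widebar{\bm{W}})$), this forces $t(\hat{\bm{H}}, \widebar{\bm{U}}) = t(\hat{\bm{H}}, \widebar{\bm{W}})$. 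Because $\bm{H}$ was arbitrary, $\widebar{\bm{U}}$ and $\widebar{\bm{W}}$ are weakly isomorphic.

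For the reverse direction, suppose $\widebar{\bm{U}}$ and $\widebar{\bm{W}}$ are weakly isomorphic, so that $t(\hat{\bm{H}}, \widebar{\bm{U}}) = t(\hat{\bm{H}}, \widebar{\bm{W}})$ for every $r$-multiplex $\bm{H}$. Then for every integer $K \geq 1$, the hypothesis \eqref{eq:tHUWinverse} of Lemma \ref{lem:Wcounting} is trivially satisfied (the left-hand side is identically zero), and therefore
\[
\delta_{\square}(\widebar{\bm{U}}, \widebar{\bm{W}}) \leq \frac{64 R^{\frac{3}{2}}}{\sqrt{\log K}}.
\]
Letting $K \to \infty$ yields $\delta_{\square}(\widebar{\bm{U}}, \widebar{\bm{W}}) = 0$, as desired.

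There is essentially no obstacle: the two lemmas are precisely matched so that the equivalence is immediate, with the only minor care required being the invariance of $t(\hat{\bm{H}}, \cdot)$ under $\mpt$ in the forward direction and the limit $K \to \infty$ in the reverse direction. I would present it as a short two-paragraph argument along the lines sketched above.
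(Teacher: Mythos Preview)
Your proof is correct and follows exactly the approach the paper intends: the paper states that the corollary is an immediate consequence of the counting lemma (Lemma \ref{lem:cont}) and the inverse counting lemma (Lemma \ref{lem:Wcounting}), and your two paragraphs carry out precisely those two implications. The only cosmetic remark is that for the direction $\delta_\square=0 \Rightarrow$ weakly isomorphic you could invoke directly the continuity statement at the end of Lemma \ref{lem:cont} rather than rederiving it via a sequence $\{\sigma_n\}$, but this is the same argument.
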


In the case of graphons, another equivalent characterization of weak isomorphism is the following (see \cite[Corollary 8.14]{Lov12}): Two graphons $U, W \in \wsp$ are weakly isomorphic if and only if there exist measure preserving transformations $\sigma_1,\sigma_2: [0, 1] \rightarrow [0, 1]$ such that
\[U^{\sigma_1}(x,y) = W^{\sigma_2}(x,y) , \] 
for almost every $(x,y) \in [0,1]^{2}$. In Corollary \ref{cor:UW} we show that an analogous result holds for multiplexons as well. 

\begin{remark}
Corollary \ref{cor:deltatUW} has also been proved for finitely decorated graphs, which covers the case of multiplex networks (see Section \ref{sec:probabilitygraphons}), in \cite[Theorem 5.6]{falgas2016multicolour}. The proof extends Schrijver's argument for the analogous result for graphons (see \cite[Remark 11.4]{Lov12}) to the decorated setting. 
\end{remark}

\section{ Equivalence of Convergence }
\label{left}

We are now ready to prove the main result about the convergence of multiplexes, which establishes the equivalence of left-convergence and convergence in terms of the cut distance.

\begin{theorem}
\label{thm:leftconvergence}
Let $\{\bm{G}_n\}_{ n \geq 1 }$ be a sequence of $r$-multiplexes. Then the following are equivalent: 

\begin{enumerate}
 
\item[$(1)$] $\{\bm{G}_n\}_{n \geq 1}$ is left-convergent, that is, $t(\bm{H},\bm{G}_n)$ converges for all $r$-multiplexes $\bm{H}$. 

\item[$(2)$] There exists a cumulatively decomposable multiplexon $\widebar{\bm{W}} \in {\wsp}_{\blacktriangle}^{R-1}$ such that $\delta_\square(\widebar{\bm{W}}^{\bm{G}_n}, \widebar{\bm{W}} ) \rightarrow 0$, where $\widebar{\bm{W}}^{\bm{G}_n}$ is the empirical multiplexon as in Definition \ref{defn:WGn}.  

\item [$(3)$] There exists a cumulatively decomposable multiplexon $\widebar{\bm{W}} \in {\wsp}_{\blacktriangle}^{R-1}$ such that $t(\bm{H},\bm{G}_n) \to t(\hat{\bm{H}}, \widebar{\bm{W}})$, for every $r$-multiplex $\bm{H}$. 

\end{enumerate}

\end{theorem}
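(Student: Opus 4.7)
The plan is to prove the cyclic implication $(2) \Rightarrow (3) \Rightarrow (1) \Rightarrow (2)$, drawing heavily on the machinery developed in the preceding sections. The three main ingredients are Proposition \ref{proposition:gtow} (which rewrites multiplex homomorphism densities as densities of the disjoint decomposition in the empirical multiplexon), the counting/inverse counting lemmas (Lemmas \ref{lem:cont} and \ref{lem:Wcounting}), and the compactness of $(\wsp_\blacktriangle^{R-1})_\square$ (Theorem \ref{cumdeccom}).

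For $(2) \Rightarrow (3)$, I would invoke Proposition \ref{proposition:gtow} to write $t(\bm{H}, \bm{G}_n) = t(\hat{\bm{H}}, \widebar{\bm{W}}^{\bm{G}_n})$ for any $r$-multiplex $\bm{H}$. Then Lemma \ref{lem:cont} (continuity of the map $\widebar{\bm{V}} \mapsto t(\hat{\bm{H}}, \widebar{\bm{V}})$ in $\delta_\square$) combined with $\delta_\square(\widebar{\bm{W}}^{\bm{G}_n}, \widebar{\bm{W}}) \to 0$ yields $t(\bm{H}, \bm{G}_n) \to t(\hat{\bm{H}}, \widebar{\bm{W}})$. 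The implication $(3) \Rightarrow (1)$ is immediate, since convergence of each $t(\bm{H}, \bm{G}_n)$ to a specific limit implies left-convergence.

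The nontrivial implication is $(1) \Rightarrow (2)$, which is where I expect the main obstacle. The idea is to use compactness together with a subsequence uniqueness argument. First, note that every empirical multiplexon $\widebar{\bm{W}}^{\bm{G}_n}$ lies in $\wsp_\blacktriangle^{R-1}$, so its equivalence class sits in $(\wsp_\blacktriangle^{R-1})_\square$. By Theorem \ref{cumdeccom}, this metric space is compact, so some subsequence $\{\widebar{\bm{W}}^{\bm{G}_{n_k}}\}$ converges in $\delta_\square$ to a limit $\widebar{\bm{W}} \in \wsp_\blacktriangle^{R-1}$. Applying the already-established implication $(2) \Rightarrow (3)$ along this subsequence and combining with the assumption that $t(\bm{H}, \bm{G}_n)$ converges, we obtain $\lim_{n\to\infty} t(\bm{H}, \bm{G}_n) = t(\hat{\bm{H}}, \widebar{\bm{W}})$ for every $r$-multiplex $\bm{H}$.

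To upgrade subsequential convergence in $\delta_\square$ to convergence of the full sequence, I would argue as follows: suppose, for contradiction, that $\delta_\square(\widebar{\bm{W}}^{\bm{G}_n}, \widebar{\bm{W}}) \not\to 0$. Then some subsequence stays bounded away from $\widebar{\bm{W}}$; extracting a further subsequence via compactness yields a different limit $\widebar{\bm{W}}'$ with $\delta_\square(\widebar{\bm{W}}', \widebar{\bm{W}}) > 0$. However, by the same argument as above, $t(\hat{\bm{H}}, \widebar{\bm{W}}') = \lim_n t(\bm{H}, \bm{G}_n) = t(\hat{\bm{H}}, \widebar{\bm{W}})$ for every $r$-multiplex $\bm{H}$, so $\widebar{\bm{W}}$ and $\widebar{\bm{W}}'$ are weakly isomorphic. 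By Corollary \ref{cor:deltatUW}, this forces $\delta_\square(\widebar{\bm{W}}, \widebar{\bm{W}}') = 0$, a contradiction. The delicate point throughout is keeping the decomposable structure intact under the limiting operations, but this is precisely what the closedness statement inside Theorem \ref{cumdeccom} and the continuity statement in Lemma \ref{lem:cont} guarantee.
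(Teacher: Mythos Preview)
Your proposal is correct and follows essentially the same approach as the paper: both use Proposition \ref{proposition:gtow} and Lemma \ref{lem:cont} for the easy implications, and for $(1)\Rightarrow(2)$ both extract a $\delta_\square$-convergent subsequence via the compactness of $(\wsp_\blacktriangle^{R-1})_\square$ (Theorem \ref{cumdeccom}) and then use Corollary \ref{cor:deltatUW} to show that any two subsequential limits agree, forcing convergence of the full sequence. The only difference is organizational—you run a cycle $(2)\Rightarrow(3)\Rightarrow(1)\Rightarrow(2)$ whereas the paper shows $(1)\Leftrightarrow(2)$ and $(1)\Leftrightarrow(3)$ separately—but the substantive argument is identical.
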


\begin{proof} From Lemma \ref{lem:cont} and Proposition \ref{proposition:gtow}, it is immediate that $(2) \implies (1)$. To establish the reverse implication, consider the sequence $\{\tilde{\widebar{\bm{W}}}^{\bm{G}_n}\}_{n \geq 1}$ of equivalence classes associated with the empirical graphon. Since $({\wsp}_{\blacktriangle}^{R-1})_{\square}$ is a compact space (by Corollary \ref{cumdeccom}), there must exist a subsequence $\{n_s\}_{s \geq 1}$ and a multiplexon $\tilde{\widebar{\bm{W}}}' \in ({\wsp}_{\blacktriangle}^{R-1})_{\square}$ such that 
$$\delta_{\square} ( \widebar{\bm{W}}^{\bm{G}_{n_s}}, \widebar{\bm{W}}') \rightarrow 0 , $$
where $\widebar{\bm{W}}' \in \wsp_{\blacktriangle}^{R-1}$ is any representative element of the equivalence class $\tilde{\widebar{\bm{W}}}'$. 
%$$\tilde{\widebar{\bm{W}}}^{\bm{G}_{n_s}} \to \tilde{\widebar{\bm{W}}}'.$$ 
Hence, by Lemma \ref{lem:cont} and Proposition \ref{proposition:gtow}, $t(\bm{H},\bm{G}_{n_s}) \to t(\hat{\bm{H}}, \widebar{\bm{W}}')$, for every $r$-multiplex $\bm{H}$. Now,  suppose there exists a different convergent subsequence $\{n'_s\}_{s\geq 1}$ such that 
$$\delta_{\square} ( \widebar{\bm{W}}^{\bm{G}_{n_s'}}, \widebar{\bm{W}}'') \rightarrow 0 , $$
for some $\tilde{\widebar{\bm{W}}}'' \in \wsp_{\blacktriangle}^{R-1}$. Then, Lemma \ref{lem:cont} and Proposition \ref{proposition:gtow} again implies, $t(\bm{H},\bm{G}_{n'_s}) \to t(\hat{\bm{H}}, \widebar{\bm{W}}'')$, for all $r$-multiplexes $\bm{H}$. However, because the sequence $\{t(\bm{H},\bm{G}_n)\}_{n \geq 1}$ is convergent, $t(\bm{H},\bm{G}_{n'_s}) \to t(\hat{\bm{H}}, \widebar{\bm{W}}')$, for all $r$-multiplexes $\bm{H}$. Thus, $t(\hat{\bm{H}}, \widebar{\bm{W}}') = t(\hat{\bm{H}}, \widebar{\bm{W}}'')$, for all $r$-multiplexes $\bm{H}$. Hence, by Corollary \ref{cor:deltatUW}, 
$\delta_\square(\widebar{\bm{W}}' , \widebar{\bm{W}}'') = 0$, that is, 
$\tilde{\widebar{\bm{W}}}' = \tilde{\widebar{\bm{W}}}'':=  \tilde{\widebar{\bm{W}}} $. 
Thus, 
$$\delta_{\square} ( \widebar{\bm{W}}^{\bm{G}_{n}}, \widebar{\bm{W}}) \rightarrow 0 , $$ 
for every $\widebar{\bm{W}}\sim\tilde{\widebar{\bm{W}}}$, and $t(\bm{H},\bm{G}_n) \to t(\hat{\bm{H}},\widebar{\bm{W}})$, for every $r$-multiplex $\bm{H}$. This proves $(1) \implies (2)$ and also $(1) \implies (3)$. Since $(3) \implies (1)$ holds trivially, this concludes the proof of the lemma.
\end{proof}

\begin{remark} 
Note that if the hypothesis in Theorem \ref{thm:leftconvergence} (1) holds, then the inverse counting lemma (Lemma \ref{lem:Wcounting}) implies that the sequence $\{\tilde{\widebar{\bm{W}}}^{\bm{G}_n}\}_{n \geq 1}$ is Cauchy in $\delta_\square$. Since $({\wsp}_{\blacktriangle}^{R-1})_{\square}$ is a compact space (by Corollary \ref{cumdeccom}), it is also complete. Hence, there exists $\widebar{\bm{W}} \in {\wsp}_{\blacktriangle}^{R-1}$ such that $\delta_\square(\widebar{\bm{W}}^{\bm{G}_n}, \widebar{\bm{W}} ) \rightarrow 0$. This is an alternate way to show $(1) \implies (2)$ in Theorem \ref{thm:leftconvergence}, that does not use Corollary \ref{cor:deltatUW}. The implication $(2) \implies (3)$ can also be established similarly. 
\end{remark}

\section{ Degree Distribution and Clustering Coefficients }
\label{sec:degreeclusteringcoefficient}

Consider an $r$-multiplex $\bm{G} = (V(\bm G), G_1, G_2, \ldots, G_r)$, with vertex set $V(\bm G) = \{1, 2, \ldots, |V(\bm G)|\}$, and cumulative decomposition $\widebar{\bm G} = (V(\bm{G}), (\widebar G_S)_{S \in \P_0([r])})$. Then, for $i \in V(\bm G)$ and $S \in \P_0([r])$, denote by $d_{\widebar{G}_S}(i)$ the degree of node $i$ in the graph $\widebar{G}_S$. Depending on the
choice of $S$, $d_{\widebar{G}_S}(i)$ has different interpretations in terms of the multiplex layers $G_1, G_2, \ldots, G_r$. 
%
%This can be expressed more formally as 
%$$d_{\widebar{G}_S}(i) = \sum_{j = 1}^{|V(\bm G)|} a_{ij}^{\widebar{G}_S}, $$
%where $a_{ij}^{\widebar{G}_S} = \bm 1 \{ (i, j) \in E(\widebar{G}_S) \}$. 
%
For example, when $S= \{s\}$ is a singleton, then $d_{\widebar{G}_{\{s\}}}(i)$ is just the degree of the node $i$ in the graph $G_s$ (the graph in the $s$-the layer). Further, when $S= \{s, t\}$, then $d_{\widebar{G}_{\{s, t\}}}(i)$ is the number
of neighbors of node $i$ that are neighbors in both layers $s$ and $t$, for $1 \leq s \ne t \leq r$. This is a measure of local overlap of the node $i$ between the $s$-th and the $ t$-th layers (see \cite[Section 7.3.1]{bianconi2018multilayer}). 

For each $S \in \P_0([r])$, the {\it empirical degree distribution} of $\widebar{G}_S$ is defined as 
$$D_{\widebar{G}_S} = \frac{1}{|V(\bm G)|}\sum_{i= 1}^{|V(\bm G)|} \delta_{\left(\frac{d_{\widebar{G}_S}(i)}{|V(\bm G)|}\right)},$$
which is the measure on $[0, 1]$ that assigns probability $\frac{1}{|V(\bm G)|}$ to the points $\{\frac{1}{|V(\bm G)|} d_{\widebar{G}_S}(i) \}_{ 1 \leq i \leq |V(\bm G)|}$. The {\it joint empirical degree distribution} of $\bm{G}$ is defined as 
$$D_{\widebar{\bm{G}}} = \frac{1}{|V(\bm G)|}\sum_{i= 1}^{|V(\bm G)|} \delta_{\left(\frac{d_{\widebar{G}_S}(i)}{|V(\bm G)|}\right)_{S \in \P_0([r])}},$$
which is the measure on $[0, 1]^{R-1}$ that assigns probability $\frac{1}{|V(\bm G)|}$ to the collection of vectors $$\left\{ \left( \frac{d_{\widebar{G}_S}(i)}{|V(\bm G)|} \right)_{S \in \P_0([r])} \right\}_{ 1 \leq i \leq |V(\bm G)|}.$$
The following result establishes the convergence of the joint empirical degree distribution: 

\begin{proposition}\label{ppn:degreedistribution}
Consider a sequence of $r$-multiplexes $\{\bm{G}_n\}_{n \geq 1}$ converging to a cumulatively decomposable multiplexon $\widebar{\bm{W}} = (\widebar{W}_S)_{S \in \P_0([r])} \in {\wsp}_{\blacktriangle}^{R-1}$. 
Then 
$$\lim_{n \rightarrow \infty}\mathrm{Wass}(D_{\widebar{\bm{G}}_n}, (d_{\widebar{W}_S}(\eta))_{S \in \P_0([r])}) \rightarrow 0,$$ 
where $d_{\widebar{W}_S}(x) := \int_0^1 \widebar{W}_S(x, y) \mathrm d y$, for $S \in \P_0([r])$, and $\eta \sim \mathrm{Unif}(0, 1)$.\footnote{The Wasserstein distance between 2 measures $\mu$ and $\nu$ on $[0, 1]^d$ is defined as: $\mathrm{Wass}(\mu, \nu) = \inf_{f} | \int f \mathrm d \mu - \int f \mathrm d \nu  | $, where the infimum is taken over all 1-Lipschitz functions $f : [0, 1]^d \rightarrow \mathbb R$. }
\end{proposition}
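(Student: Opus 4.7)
The plan is to identify the empirical joint degree distribution $D_{\widebar{\bm{G}}_n}$ with the pushforward of Lebesgue measure on $[0,1]$ under a vector of degree functions of the empirical multiplexon $\widebar{\bm{W}}^{\bm{G}_n}$, and then control the Wasserstein distance using an $L^1$-estimate on these degree functions that follows from cut-norm convergence.

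First, for a graphon $W$ set $d_W(x) := \int_0^1 W(x,y)\,\mathrm dy$. A direct computation from Definition \ref{defn:WGn} shows that for $x$ in the block $Q_i$ defined in \eqref{eq:interval}, we have $d_{\widebar{W}^{\bm{G}_n}_S}(x) = d_{\widebar{G}_{n, S}}(i)/|V(\bm{G}_n)|$. Hence $D_{\widebar{\bm{G}}_n}$ is precisely the law of the vector $(d_{\widebar{W}^{\bm{G}_n}_S}(\eta))_{S \in \P_0([r])}$ when $\eta \sim \mathrm{Unif}([0,1])$, so the problem reduces to a Wasserstein comparison of pushforwards on $[0,1]^{R-1}$.

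Next, by the implication $(1) \Rightarrow (2)$ in Theorem \ref{thm:leftconvergence}, $\delta_\square(\widebar{\bm{W}}^{\bm{G}_n}, \widebar{\bm{W}}) \to 0$, so there exist $\sigma_n \in \mpt$ with $\sum_{S} \|\widebar{W}^{\bm{G}_n}_S - \widebar{W}_S^{\sigma_n}\|_\square \to 0$. The key analytic input I would establish is the cut-norm-to-$L^1$ bound $\|d_U - d_V\|_{L^1([0,1])} \leq 4\|U - V\|_\square$ for any $U, V \in \wsp$. This follows by testing against the constant function $g \equiv 1$ in \eqref{sqinfmet}:
\begin{align*}
\|d_U - d_V\|_{L^1([0,1])} = \sup_{|f| \leq 1}\left|\int_{[0,1]^2}(U-V)(x,y) f(x) \cdot 1 \,\mathrm dx\,\mathrm dy\right| \leq \|U - V\|_{\infty \to 1} \leq 4 \|U - V\|_\square.
\end{align*}
Applied layerwise, this yields $\sum_S \|d_{\widebar{W}^{\bm{G}_n}_S} - d_{\widebar{W}_S^{\sigma_n}}\|_{L^1} \to 0$.

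Finally, since $\sigma_n$ is measure-preserving, a change-of-variables gives $d_{\widebar{W}_S^{\sigma_n}}(x) = d_{\widebar{W}_S}(\sigma_n(x))$ for almost every $x$, so the law of $(d_{\widebar{W}_S^{\sigma_n}}(\eta))_{S}$ coincides with the law of $(d_{\widebar{W}_S}(\eta))_{S}$. Using the Kantorovich--Rubinstein duality for the $1$-Wasserstein distance and the diagonal coupling obtained by evaluating both degree vectors at the same $\eta$,
\begin{align*}
\mathrm{Wass}\bigl(D_{\widebar{\bm{G}}_n}, \mathrm{Law}((d_{\widebar{W}_S}(\eta))_{S})\bigr) \leq \sum_{S \in \P_0([r])} \mathbb E\bigl|d_{\widebar{W}^{\bm{G}_n}_S}(\eta) - d_{\widebar{W}_S^{\sigma_n}}(\eta)\bigr| = \sum_S \|d_{\widebar{W}^{\bm{G}_n}_S} - d_{\widebar{W}_S^{\sigma_n}}\|_{L^1} \to 0.
\end{align*}
The main subtlety I anticipate is insisting on a \emph{single} measure-preserving bijection $\sigma_n$ that aligns all layers of the multiplex simultaneously; this is exactly the form of the cut distance in \eqref{eq:UWdeltalayer}, and once this fact is invoked the remainder of the argument parallels the scalar graphon case with only bookkeeping overhead.
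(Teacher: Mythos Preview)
Your proposal is correct and follows essentially the same approach as the paper: both arguments reduce to bounding $\sum_S \|d_{\widebar{W}^{\bm{G}_n}_S} - d_{\widebar{W}_S^{\sigma}}\|_{L^1}$ by the cut norm and then exploit invariance of the target law under measure-preserving $\sigma$. The only cosmetic differences are that the paper works directly with Lipschitz test functions (the dual side) rather than the diagonal coupling, and it bounds $\|d_U-d_V\|_{L^1}$ by $2\|U-V\|_\square$ via the indicator $h(x)=\bm 1\{d_U(x)\geq d_V(x)\}$ instead of passing through $\|\cdot\|_{\infty\to 1}$ for a constant of $4$.
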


\begin{proof} 
Consider an 1-Lipschitz function $\bm f: [0, 1]^{R-1} \rightarrow \mathbb R $, that is, $| \bm f (\bm x) - \bm f (\bm y) | \leq \| \bm x - \bm y \|_1$, for $\bm x, \bm y \in [0, 1]^{R-1}$. Suppose $\widebar{\bm{G}}_n = ( (\widebar{G}_n)_S)_{S \in \P_0([r])}$ is the cumulative decomposition of $\bm{G}_n$. Note that, for $S \in \P_0([r])$, 
\begin{align}\label{eq:DW}
& \left| \int \bm f \mathrm d D_{\widebar{\bm{G}}_n}  - \int_0^1 \bm f \left( (d_{\widebar{W}_S}(x))_{S \in \P_0([r])}\right)  \mathrm dx \right| \nonumber \\ 
& = \left| \frac{1}{|V(\bm G_n)|}\sum_{i= 1}^{|V(\bm G_n)|} \bm f \left( \left( \frac{d_{(\widebar{G}_n)_S}(i)}{|V(\bm G_n)|} \right)_{S \in \P_0([r])} \right) - \int_0^1 \bm f  \left( \left( \int_0^1 \widebar{W}_S(x, y) \mathrm d y \right)_{S \in \P_0([r])} \right) \mathrm dx \right| \nonumber \\ 
& = \left|  \int_0^1 \bm f \left( \left( \int_0^1 \widebar{W}^{\bm{G}_n}_S(x, y) \mathrm d y \right)_{ S \in \P_0([r]) } \right) \mathrm dx - \int_0^1 \bm f  \left( \left( \int_0^1 \widebar{W}_S(x, y) \mathrm d y \right)_{S \in \P_0([r])} \right) \mathrm dx \right| \nonumber \\ 
& \leq  \int_0^1 \left|  \bm f \left( \left( \int_0^1 \widebar{W}^{\bm{G}_n}_S(x, y) \mathrm d y \right)_{ S \in \P_0([r]) } \right)  -  \bm f  \left( \left( \int_0^1 \widebar{W}_S(x, y) \mathrm d y \right)_{S \in \P_0([r])} \right) \right| \mathrm dx \nonumber \\ 
& \leq   \sum_{S \in \P_0([r])} \int_0^1 \left| \int_0^1 \left(\widebar{W}^{\bm{G}_n}_S(x, y) - \widebar{W}_S(x, y) \right)  \mathrm d y   \right| \mathrm d x , 
\end{align} 
since $\bm f $ is 1-Lipschitz. Now, define the function $h_S(x) := \bm 1\{\int_0^1 \widebar{W}^{\bm{G}_n}_S(x, y) \mathrm d y \geq \int_0^1 \widebar{W}_S(x, y) \mathrm d y \}$, for $x \in [0, 1]$ and $S \in \P_0([r])$. Then 
\begin{align}\label{eq:dW}
& \int_0^1 \left| \int_0^1 \left(\widebar{W}^{\bm{G}_n}_S(x, y) - \widebar{W}_S(x, y) \right)  \mathrm d y   \right| \mathrm d x \nonumber \\ 
& = \int_{[0, 1]^2}  \left(\widebar{W}^{\bm{G}_n}_S(x, y) - \widebar{W}_S(x, y) \right) h_S(x)  \mathrm d x  \mathrm d y + \int_{[0, 1]^2}  \left( \widebar{W}_S(x, y) - \widebar{W}^{\bm{G}_n}_S(x, y) \right) ( 1 - h_S(x) ) \mathrm d x  \mathrm d y \nonumber \\ 
& \leq 2 \| \widebar{W}^{\bm{G}_n}_S - \widebar{W}_S \|_\square . 
\end{align}
Hence, combining \eqref{eq:DW} and \eqref{eq:dW}, 
$$\mathrm{Wass}(D_{\widebar{\bm{G}}_n}, (d_{\widebar{W}_S}(\eta))_{S \in \P_0([r])}) \leq \sum_{S \in \P_0([r])} 2 \| \widebar{W}^{\bm{G}_n} - \widebar{W}_S \|_\square . $$ 
Taking the infimum of over $\sigma \in \mathcal F$ and noting that the LHS is invariant to measure-preserving bijections,  
$$\mathrm{Wass}(D_{\widebar{\bm{G}}_n}, (d_{\widebar{W}_S}(\eta))_{S \in \P_0([r])}) \leq \inf_{\sigma \in \mathcal F}\sum_{S \in \P_0([r])} 2 \| \widebar{W}^{\bm{G}_n} - \widebar{W}_S^{\sigma} \|_\square =  2\delta_\square (\widebar{\bm{W}}^{\bm{G}_n}, \widebar{\bm{W}}) \rightarrow 0 , $$ 
which completes the proof of the result. 
\end{proof} 

\begin{remark} When $\{G_n\}_{n \geq 1}$ is a sequence of graphs converging to a graphon $W$, Proposition \ref{ppn:degreedistribution} simplifies to $\mathrm{Wass}(D_{G_n}, d_{W}(\eta))  \rightarrow 0$. This is also proved in \cite[Lemma 3.10]{bassino2022random}. 
\end{remark}

In the analysis of social networks, a clustering coefficient is a basic measure of the degree to which nodes in a network tend to cluster together. Specifically, for a graph $G= (V(G), E(G))$, with vertex set $V(G) = \{1, 2, \ldots, |V(G)|\}$ and adjacency matrix $(a_{ij})_{1 \leq i , j \leq n}$, the local clustering coefficient of a node $i \in V(G)$ is defined as (see \cite{watts1998collective} and also \cite[Chapter 10]{networks}): 
\begin{align*} 
\theta_{G}(i) = \frac{ \sum_{1 \leq j \ne k \leq |V(G)|} a_{i j} a_{jk} a_{ki}}{ d_{G}(i) ( d_{G}(i) - 1) }, 
\end{align*}
where $d_{G}(i)$ is the degree of the vertex $i$ in $G$. (Note that $\theta_{G}(i)$ is defined to be zero when $d_G(i)$ is 0 or 1.)  The average of the local clustering coefficients over all the nodes is known as the {\it average clustering coefficient}: 
\begin{align}\label{eq:thetaGaverage}
\widebar{\theta}_{G} := \frac{1}{|V(G)|} \sum_{i=1}^{|V(G)|} \theta_{G}(i), 
\end{align} 
which is an overall measure of the level of clustering in a network. An alternative measure is the global clustering coefficient \cite{luce1949method} (also known as the transitivity ratio \cite[Page 243]{wasserman1994social}): 
\begin{align}\label{eq:tauG}
\tau_{G} = \frac{ \sum_{1 \leq i \ne j \ne k \leq |V(G)|} a_{i j} a_{jk} a_{ki}}{ \frac{1}{2}\sum_{1 \leq i \leq |V(G)|} d_{G}(i) ( d_{G}(i) - 1) } = \frac{6 \times \text{number of triangles}}{ \text{number of 2-stars} }. 
\end{align} 

In a multiplex network, the clustering coefficient can be generalized in several different
ways, since triangles can include links belonging to different layers. Specifically, for an $r$-multiplex $\bm{G} = (V(\bm G), G_1, G_2, \ldots, G_r)$, with vertex set $V(\bm G) = \{1, 2, \ldots, |V(\bm G)|\}$, \cite{structuralmultiplexnetworks} defined local clustering coefficients of a node $i \in V(\bm{G})$ as follows (see also \cite[Section 6.3.1]{bianconi2018multilayer}):  
\begin{align}\label{eq:theta1G}
\theta_{\bm{G}}^{(1)}(i)& = \frac{ \sum_{1 \leq s_1 \ne s_2 \leq r } \sum_{1 \leq j \ne k \leq |V(\bm{G})|} a_{i j}^{(s_1)} a_{jk}^{(s_2)} a_{ki}^{(s_1)}}{ (r-1) \sum_{1 \leq s_1 \leq r } d_{G_{s_1}}(i) ( d_{G_{s_1}}(i) - 1) } ,  \\ 
\theta_{\bm{G}}^{(2)}(i)  & = \frac{ \sum_{1 \leq s_1 \ne s_2 \ne s_3 \leq r } \sum_{1 \leq j \ne k \leq |V(\bm{G})|} a_{i j}^{(s_1)} a_{jk}^{(s_2)} a_{ki}^{(s_3)}}{ (r-2) \sum_{1 \leq s_1 \ne s_3  \leq r } d_{G_{s_1}}(i) d_{G_{s_3}}(i)  } . \label{eq:theta2G}
\end{align}
In words, $\theta_{\bm{G}}^{(1)}(i)$ and $\theta_{\bm{G}}^{(2)}(i)$ are the normalized number of triangles spanning two and three layers that include the node $i \in V(\bm{G})$, respectively. As before, $\theta_1(i)$ and $\theta_2(i)$ are set to zero whenever the respective denominators are zero. As in \eqref{eq:thetaGaverage}, averaging over all the nodes one obtains the average clustering coefficients of $\bm{G}$: 
$$\widebar{\theta}_{\bm{G}}^{(1)} = \frac{1}{|V(G)|} \sum_{i=1}^{|V(G)|} \theta_{\bm{G}}^{(1)}(i)  \text{ and } \widebar{\theta}_{\bm{G}}^{(2)} = \frac{1}{|V(G)|} \sum_{i=1}^{|V(G)|} \theta_{\bm{G}}^{(2)}(i).$$
Alternatively, similar to \eqref{eq:tauG}, one can define the global clustering coefficients (transitivity ratios) of $\bm{G}$ as follows (see \cite[Section 6.3.1]{bianconi2018multilayer}):  
\begin{align*} 
\tau_{\bm{G}}^{(1)} & = \frac{ \sum_{1 \leq s_1 \ne s_2 \leq r } \sum_{1 \leq i \ne j \ne k \leq |V(\bm{G})|} a_{i j}^{(s_1)} a_{jk}^{(s_2)} a_{ki}^{(s_1)}}{ (r-1) \sum_{1 \leq s_1 \leq r } \sum_{1 \leq i \leq |V(\bm{G})|} d_{G_{s_1}}(i) ( d_{G_{s_1}}(i) - 1) } ,  \\ 
\tau_{\bm{G}}^{(2)}  & = \frac{ \sum_{1 \leq s_1 \ne s_2 \ne s_3 \leq r } \sum_{1 \leq i \ne j \ne k \leq |V(\bm{G})|} a_{i j}^{(s_1)} a_{jk}^{(s_2)} a_{ki}^{(s_3)}}{ (r-2) \sum_{1 \leq s_1 \ne s_3 \leq r } \sum_{1 \leq i \leq |V(\bm{G})|} d_{G_{s_1}}(i) d_{G_{s_3}}(i)  } . 
\end{align*}
%This is summarized in the following proposition, which follows from the definition of left-convergence and Proposition \ref{ppn:degreedistribution}. 

Since the clustering coefficient of multiplex networks is determined by the counts of triangles and the degree distributions, their limits (as the size of the network grows) can be expressed in terms of the limiting multiplexon. To this end, suppose $\{\bm{G}_n\}_{n \geq 1}$ is a sequence of $r$-multiplexes converging to a cumulatively decomposable multiplexon $\widebar{\bm{W}} = (\widebar{W}_S)_{S \in \P_0([r])} \in {\wsp}_{\blacktriangle}^{R-1}$.  Then, from the definition of left-convergence and Proposition \ref{ppn:degreedistribution}, we get the following convergence results: 

\begin{itemize}

\item Let $$\Theta_{\bm{G}_n}^{(1)} = \frac{1}{|V(\bm{G}_n)|} \sum_{i=1}^{|V(\bm{G}_n)|} \delta_{\theta_{\bm{G}_n}^{(1)}(i)} \text{ and } \Theta_{\bm{G}_n}^{(2)} = \frac{1}{|V(\bm{G}_n)|} \sum_{i=1}^{|V(\bm{G}_n)|} \delta_{\theta_{\bm{G}_n}^{(2)}(i)},$$ be the empirical distributions of the local clustering coefficients defined in \eqref{eq:theta1G} and \eqref{eq:theta2G}, respectively. Then
\begin{align*} 
\Theta_{\bm{G}_n}^{(1)} & \stackrel{D} \rightarrow \frac{ \sum_{1 \leq s_1 \ne s_2 \leq r } \int_{[0, 1]^2} \widebar{W}_{\{s_1\}}(\eta, y) \widebar{W}_{\{s_2\}} (y, z) \widebar{W}_{\{s_1\}} (\eta, z) \mathrm d y \mathrm d z }{ (r-1) \sum_{1 \leq s_1 \leq r } d_{\widebar{W}_{\{s_1\}}} (\eta)^2 } , \nonumber \\ 
\Theta_{\bm{G}_n}^{(2)} & \stackrel{D} \rightarrow \frac{ \sum_{1 \leq s_1 \ne s_2 \ne s_3 \leq r } \int_{[0, 1]^2} \widebar{W}_{\{s_1\}}(\eta, y) \widebar{W}_{\{s_2\}} (y, z) \widebar{W}_{\{s_3\}} (\eta, z) \mathrm d y \mathrm d z }{ (r-2) \sum_{1 \leq s_1 \ne s_3 \leq r } d_{\widebar{W}_{\{s_1\}}} (\eta) d_{\widebar{W}_{\{s_3\}}} (\eta) } , 
\end{align*} 
where $\eta \sim \mathrm{Unif}(0, 1)$.

\item Consequently, 
\begin{align*} 
\widebar{\theta}_{\bm{G}_n}^{(1)} & \rightarrow \int_{[0, 1]} \frac{ \sum_{1 \leq s_1 \ne s_2 \leq r } \int_{[0, 1]^2} \widebar{W}_{\{s_1\}}(x, y) \widebar{W}_{\{s_2\}} (y, z) \widebar{W}_{\{s_1\}} (x, z) \mathrm d y \mathrm d z }{ (r-1) \sum_{1 \leq s_1 \leq r } d_{\widebar{W}_{\{s_1\}}} (x)^2 } \mathrm d x , \nonumber \\ 
\widebar{\theta}_{\bm{G}_n}^{(2)} & \rightarrow \int_{[0, 1]} \frac{ \sum_{1 \leq s_1 \ne s_2 \ne s_3 \leq r } \int_{[0, 1]^2} \widebar{W}_{\{s_1\}}(x, y) \widebar{W}_{\{s_2\}} (y, z) \widebar{W}_{\{s_3\}} (x, z) \mathrm d y \mathrm d z }{ (r-2) \sum_{1 \leq s_1 \ne s_3 \leq r } d_{\widebar{W}_{\{s_1\}}} (x) d_{\widebar{W}_{\{s_3\}}} (x) } \mathrm d x .  
\end{align*} 

\item Moreover, the global clustering coefficients have the following limits: 
\begin{align*}%\label{eq:tauW}
\tau_{\bm{G}_n}^{(1)} & \rightarrow \frac{ \sum_{1 \leq s_1 \ne s_2 \leq r } \int_{[0, 1]^3} \widebar{W}_{\{s_1\}}(x, y) \widebar{W}_{\{s_2\}} (y, z) \widebar{W}_{\{s_1\}} (x, z) \mathrm d x \mathrm d y \mathrm d z }{ (r-1) \sum_{1 \leq s_1 \leq r } \int_{[0, 1]} d_{\widebar{W}_{\{s_1\}}} (x)^2 \mathrm d x} ,  \\ 
\tau_{\bm{G}_n}^{(2)} & \rightarrow \frac{ \sum_{1 \leq s_1 \ne s_2 \leq r } \int_{[0, 1]^3} \widebar{W}_{\{s_1\}}(x, y) \widebar{W}_{\{s_2\}} (y, z) \widebar{W}_{\{s_3\}} (x, z) \mathrm d x \mathrm d y \mathrm d z }{ (r-2) \sum_{1 \leq s_1 \ne s_2 \leq r } \int_{[0, 1]} d_{\widebar{W}_{\{s_1\}}} (x) d_{\widebar{W}_{\{s_3\}}} (x) \mathrm d x} . 
\end{align*}
\end{itemize}

\section{Examples of Convergent Multiplexes}
\label{sec:examples}

In this section, we discuss various multiplex network models and their multiplexon limits. To begin with, recall the prototypical example of the inhomogeneous random multiplex model from Definition \ref{defn:Wrandomlayers} and Remark \ref{remark:Wrandomlayerscumulative}. This includes, as a special case, the Erd\H{o}s-R\'{e}nyi random multiplex model (recall Example \ref{example:random12} and Remark \ref{remark:Gnpparameters}). Another relevant special case of the inhomogeneous random multiplex model is the correlated stochastic block model, where the graphon in each layer is a block function. 
%We describe this below with $r=2$ layers. 
 
\begin{example}[Correlated Stochastic Block Model]  
\label{example:blockgraphon}
% For simplicity, we describe the model with $r=2$ layers. 
Fix $r \geq 1$. An $r$-multiplex $\bm G_n$ with $V(\bm{G}_n) = [n]$ and disjoint decomposition $([n], ((\hat{G}_n)_S)_{S \in \P([r])})$ is distributed according to a correlated stochastic block model (CSBM) with $K$-blocks, if it is are generated as follows: Suppose $z_i$ is the block-label of the vertex $i \in [n]$. Then, independently for $1 \leq i < j \leq n$, 
\begin{align}\label{eq:EGab}
\mathbb{P}((i, j) \in E((\hat{G}_n)_{S}) \mid z_i = a, z_j = b)) &= \hat{p}_S^{\{a, b\}} , 
\end{align}
where, for each $a, b \in [K]$, $(\hat{p}_S^{\{a, b\}})_{S \in \P([r])}$ are non-negative constants satisfying $\sum_{S \in \P([r])} \hat{p}_S^{\{a, b\}} = 1$.  
The label vector $(z_1, z_2, \ldots, z_n)$ is usually assumed to follow $\mathrm{Multi}(K, q_1, q_2, \ldots, q_K)$, where $\sum_{a=1}^K q_a = 1$. Then the multiplex $\bm G_n$ converges to the disjointly decomposable multiplexon $\hat{\bm{W}} = (\hat{W}_S)_{S \in \P([r])}$ defined as: 
$$\hat{W}_S (x, y) =  \hat{p}_S^{\{a, b\}}  , \text{ for } (x, y) \in \left [\sum_{s=1}^{a-1} q_s, \sum_{s=1}^{a} q_s \right] \times \left [\sum_{s=1}^{b-1} q_s, \sum_{s=1}^{b} q_s \right], $$
for $S \in \P([r])$. 
Note that when $r=1$, this reduces to the classical (single-layer) stochastic block model, which is one of the most widely studied models in network analysis \cite{holland1983stochastic,olhede2014network,kolaczyk2014statistical}. 
Recently, CSBM has emerged as an important prototype for modeling multilayer social network data \cite{pamfil2020inference,zhang2024consistent,barbillon2017stochastic} and in graph matching problems (see \cite{correlatedsbm,racz2021correlated,gaudio2022exact,racz2023matching,lyzinski2018information,lyzinski2015spectral,onaran2016optimal} and references therein). 
The CSBM can be parameterized in a number of different ways. For example, similar to the Erd\H{o}s-R\'{e}nyi multiplex (recall Remark \ref{remark:Gnpparameters}), the cumulative decomposition of the CSBM can be obtained using the relations  \eqref{eq:cd} and \eqref{eq:dc}. For alternative parameterization, see \cite[Section 5.1]{chandna2022edge}. 
In applications, the probabilities in \eqref{eq:EGab} are often assumed to have some parametric forms. For instance, Barbillon et al. \cite{barbillon2017stochastic} considered a logistic model for \eqref{eq:EGab} in terms of observed edge covariates, and Zhang et al. \cite{zhang2024consistent} expressed the probabilities in \eqref{eq:EGab} using an Ising model. 
\end{example}

Next, we consider a multiplex analogue of threshold graphs \cite[Example 11.36]{Lov12} (see also \cite{thresholdgraphs}). For simplicity, we describe the model with two layers. 

\begin{example}[Multiplex Threshold Graphs] Consider the 2-multiplex $\bm{G}_n = (V(\bm{G}_n), (G_n)_1, (G_n)_2)$, where $V(\bm{G}_n) = \{1, 2, \ldots, n\}$ and 
$$E((G_n)_1) := \{(i, j): i+j \leq \lceil a n \rceil \} \text{ and } E((G_n)_2) := \{(i, j): i+j \leq \lceil b n \rceil \} ,$$
for some fixed $a, b \in [0, 1]$. This multiplex converges to a cumulatively decomposable multiplexon $\widebar{\bm{W}} = (\widebar{W}_{\{1\}}, \widebar{W}_{\{2\}}, \widebar{W}_{\{1, 2\}} )$, where 
$$\widebar{W}_{\{1\}} (x, y) = \bm 1\{x+ y \leq a\}, \quad \widebar{W}_{\{2\}} (x, y) = \bm 1\{x+ y \leq b\},$$ 
and $$\widebar{W}_{\{1, 2\}} (x, y) = \bm 1\{x+ y \leq \min\{a, b\} \}.$$
\end{example}

Next, we consider a multiplex version of the uniform attachment model in \cite[Example 11.39]{Lov12}. As before, we consider two layers.

\begin{example}[Multiplex Uniform Attachment]
\label{ex:unifattach} 
Suppose $\bm{G}_n = (V(\bm{G}_n), (G_n)_1, (G_n)_2)$ is a 2-layer multiplex with $V(\bm{G}_n) = \{0, 1, \dots,n-1\}$ where the edge sets are generated sequentially as follows: Start with a single node labeled 0. At the $n$-th iteration, for $n \geq 2$, a new node labeled $n-1$ is born, and then edges are added to the multiplex according to the following rules: 

\begin{itemize}

\item For the first layer, every pair of nonadjacent edges is connected with probability $\frac{1}{n}$. 
%if $(i, j) \in E((G_{n-1})_{1})$, then $(i, j) \in E((G_{n})_{1})$ as well. If $(i, j) \notin E((G_{n-1})_{1})$, then $(i, j) \in E((G_{n})_{1})$ with probability $\frac{1}{n}$ (independent of any other edge). 

\item For the second layer, if $(i, j) \notin E((G_{n})_1)$, then $(i, j) \notin E((G_{n})_2)$. Otherwise, $(i, j) \in E((G_{n})_2)$ with probability $\frac{1}{n}$ (independent of any other edge). 

\end{itemize}
Now, following along with \cite[Example 11.39]{Lov12}, we can do some simple computations. Suppose that $i < j< n$. Then 
\begin{align}\label{eq:ijn}
\PP\left((i, j) \notin E((G_{n})_1)\right) = \frac{j}{j+1}\frac{j+1}{j+2}\cdots\frac{n-1}{n} = \frac{j}{n} . 
\end{align}
Hence,  $\PP\left((i, j) \in E((G_{n})_1)\right) = 1 - \frac{j}{n}$. Similarly, for $j < m \leq n$,
\begin{align}\label{eq:ijmn}
\PP\left((i, j) \notin E((G_{n})_2)\middle|(i, j) \in E((G_{m})_1)\setminus E( (G_{m-1})_1)\right) &= \frac{m-1}{m}\frac{m}{m+1}\cdots\frac{n-1}{n} = \frac{m-1}{n} , 
\end{align} 
and 
\begin{align}\label{eq:ijm} 
\PP\left((i, j) \in E((G_{m})_1)\setminus E( (G_{m-1})_1\right) &= \begin{cases}
\frac{j}{m-1}\cdot\frac{1}{m} &\te{ if } m > j+1,\\
\frac{1}{j+1} &\te{ if } m = j+1,
\end{cases} \nonumber \\
&= \frac{j}{m(m-1)} .  
\end{align}
Combining \eqref{eq:ijmn} and \eqref{eq:ijm} gives, 
\begin{align*}
\PP\left((i, j) \notin E((G_{n})_2)\te{ and } (i, j) \in E((G_{m})_1)\setminus E( (G_{m-1})_1\right) &= \frac{j}{mn}. 
\end{align*}
This implies, 
\begin{align*}
& \PP\left((i, j) \notin E((G_{n})_2)\te{ and } (i, j)\in E((G_{n})_1)\right) \nonumber \\ 
& = \sum_{k=j+1}^n \PP\left((i, j) \notin E((G_{n})_2)\te{ and } (i, j) \in E((G_{k})_1)\setminus E( (G_{k-1})_1\right) \nonumber \\ 
& = \frac{j}{n}\sum_{k=j+1}^n \frac{1}{k}. 
\end{align*} 
Furthermore, from \eqref{eq:ijn}, 
\begin{align*} 
\PP\left((i, j) \notin E((G_{n})_2)\te{ and } (i, j)\notin E((G_{n})_1)\right) &= \PP\left((i, j)\notin E((G_{n})_1)\right) = \frac{j}{n}. 
\end{align*} 
Hence, combining the above two equations, 
\begin{align*}
\PP\left((i, j) \notin E((G_{n})_2)\right) &= \frac{j}{n}\left(1 + \sum_{k=j+1}^n \frac{1}{k}\right).
\end{align*} 
Note that for $z \in (0,1]$,
\[\lim_{n\to\infty} \sum_{k=\lceil zn\rceil+1}^n \frac{1}{k} = \lim_{n\to\infty} \int_{zn+1}^n \frac{1}{t}\,dt = \lim_{n\to\infty}\ln\left(\frac{n}{zn+1}\right) = -\ln(z).\]
Note that substituting $i = \lceil xn\rceil$ and $j = \lceil yn \rceil$ for $(x,y) \in [0,1]^2$, in the limit $\frac{j}{n}$ can be replaced by $\lim_{n\to\infty}\lceil n\max\{x,y\} \rceil/n = \max\{x,y\}$ in the above equations. Hence, denoting $\widebar{\bm{W}}^{G_n} = (\widebar{W}^{G_n}_{\{1\}}, \widebar{W}^{G_n}_{\{2\}}, \widebar{W}^{G_n}_{\{1, 2\}})$ as the empirical multiplexon for $\bm{G}_n$, we have 
\begin{align*}
\lim_{n\to\infty} \PP\left(\widebar{W}^{G_n}_{\{1\}}(x,y) = 1\right) &= 1 - \max\{x,y\},\\
\lim_{n\to\infty} \PP\left(\widebar{W}^{G_n}_{\{2\}}(x,y) = 1\right) &= 1 - \max\{x,y\}\left(1 - \ln\left(\max\{x,y\}\right)\right),\\
\lim_{n\to\infty} \PP\left(\widebar{W}^{G_n}_{\{1,2\}}(x,y) = 1\right) &= 1 - \max\{x,y\}\left(1 - \ln\left(\max\{x,y\}\right)\right).
\end{align*} 
Hence, $\widebar{\bm{W}}^{G_n} \to \widebar{\bm{W}} = (\widebar{W}_{\{1\}}, \widebar{W}_{\{2\}}, \widebar{W}_{\{1, 2\}})$ almost everywhere and therefore in $\wsp^R_{\blacktriangle}.$
\[\widebar{W}_S(x,y) = \begin{cases}
1 - \max\{x,y\} &\te{ if } S = \{1\},\\
1 - \max\{x,y\}\left(1 - \ln\left(\max\{x,y\}\right)\right) &\te{ if } S = \{2\}\te{ or } \{1,2\}.
\end{cases}\]
\end{example}

Another class of examples that can be considered as multiplex networks are dynamic network models. In the following, we present a simple example (see \cite{dynamicfeedback} for more general settings where the evolution of the graph structure also depends on an underlying latent process). 

\begin{example}[Dynamic Graphs]
For each $n$, let $\bm{G}_n = (G_n(t))_{t \geq 0}$ be a dynamic graph sequence with fixed vertex set $V(\bm{G}_n) = [n]$, where $\bm{G}_n(0) \sim G(n,p)$ (for some $p \in [0, 1]$) and at each subsequent integer time $t \geq 1$, the graph $G_n(t)$ is obtained from $G_n(t-1)$ as follows:\footnote{A similar model in continuous time appears in \cite{braunsteins2023sample}, where large deviations of sample paths are established (see also the recent paper \cite{bhamidi2025large} for a more general dynamical model on directed graphs).} 
Independently for any $1 \leq i < j \leq n$: 
$$\PP\left((i,j) \in E(\bm{G}_n(t))\middle|  (i,j) \in E(\bm{G}_n(t-1)) \right) = q_1$$
and 
$$\PP\left((i,j) \in E(\bm{G}_n(t))\middle|  (i,j) \notin E(\bm{G}_n(t-1)) \right) = 1-q_0, $$
for $q_0, q_1 \in [0, 1]$. In other words, independently each $1 \leq i < j \leq n$, the process $\{X_{ij}^{(t)}\}_{t \geq 0}$, where $X_{ij}^{(t)}= \bm 1\{(i,j) \in E(\bm{G}_n(t))\}$, evolves as an $\{0, 1\}$-valued Markov chain with transition matrix 
$$
\bm Q= \begin{pmatrix} 
q_0 & 1-q_0 \\ 
1-q_1 & q_1
\end{pmatrix} . 
$$ 
Now, fix a finite time horizon $T \geq 1$ and denote by $[T]_0:= \{0, 1, \ldots, T\}$. Then consider the graph trajectory $\bm{G}_n([T]_0) := (G_n(0), G_n(1), \ldots G_n(T))$ as a $(T+1)$-multiplex on the vertex set $[n]$. Then, as $n \rightarrow \infty$, $\bm{G}_n([T]_0)$ converges to a cumulatively decomposable $(2^{T+1}-1)$-multiplexon $ \widebar{\bm{W}} = (\widebar{W}_S)_{S \in \P_0([T]_0)}$, where 
\begin{align}\label{eq:dynamic}
\widebar W_S(x, y) = \PP(X_{12}^{(s)} = 1, \text{ for all } s \in S), 
\end{align}
for $S \in \P_0([T]_0)$ and all $(x, y) \in [0, 1]$. For each layer,  $S \in \P_0([T]_0)$, the function $\widebar W_S$ is a constant function given by the probability in \eqref{eq:dynamic}, which can be computed in terms of $p, q_1, q_2$ using the Markov property. For example, when $S = \{t\}$, for $0 \leq t \leq T$, is a singleton, then $\widebar{W}_{\{0\}} (x, y) = p$, for all $x, y \in [0, 1]$, and for $t \geq 1$, 
$$\widebar{W}_{\{t\}} (x, y) = p(\bm Q^t)_{22} + (1-p) (\bm Q^t)_{12},$$ for all $(x, y) \in [0, 1]$, where $\bm Q^t$ is the $t$-th power of the transition matrix $\bm Q$ and $(\bm Q^t)_{ij}$ is the $(i, j)$-th element of the matrix $\bm Q^t$, for $1 \leq i , j \leq 2$ (the computation of $\bm{Q}^t$ is given in \cite[Section 5.1]{taylorintroduction}). Note that $\widebar{\bm{W}}$ is an example of a correlated Erd\H{o}s-R\'enyi multiplex.
\end{example}

\section{Connections to Decorated/Probability Graphons} 
\label{sec:probabilitygraphons}

Lov\'asz and Szegedy \cite{lovasz2010limits} introduced the notion of measure-valued graphons as limit objects for decorated graphs, which, in particular, covers the case of multiplex networks. Specifically, given a Polish space $\bm{Z}$, a $\bm{Z}$-decoration of a simple graph $F= (V(F), E(F))$ is a map $g: E(F) \rightarrow \bm{Z}$. The limit of a sequence of $\bm{Z}$-decorated graphs can be described in terms of a {\it probability-graphon} \cite{abraham2025probability}, which is a symmetric probability kernel 
\begin{align}\label{eq:WZm}
\bm{W} : [0, 1]^2 \rightarrow \mathcal{M}(\bm{Z}) , 
\end{align}
where $\mathcal{M}(\bm{Z})$ is the space of probability measures on $\bm{Z}$. Here, symmetric means $\bm{W}(x, y; A) = \bm{W}(y, x; A)$ for all $x, y \in [0, 1]$ and for every measurable subset $A$ of $\bm{Z}$. One can interpret a probability graphon as follows: for any two `vertices' $x, y \in [0, 1]$, the weight of the edge between $x$ and $y$ is distributed as the probability measure $\bm{W}(x, y; \mathrm dz)$. Lov\'asz and Szegedy \cite{lovasz2010limits} introduced the notion of left-convergence of $\bm{Z}$-decorated graphs when is $\bm{Z}$ is compact and referred to functions of the form \eqref{eq:WZm} as $\bm{Z}$-graphons (see also \cite[Chapter 17]{Lov12}). The compactness assumption was later relaxed to include general Banach space decorations in \citet{KunLovSze22}. Convergence properties of decorated graphs when $\bm{Z}$ is finite or countable, which includes the case of multigraphs and multiplex networks, were considered in \cite{kolossvary2011multigraph,falgas2016multicolour,rollin2023dense}. 
Recently, \citet{abraham2025probability} established metric properties of the convergence of decorated graphs when $\bm{Z}$ is a general Polish space and also introduced the term ``probability graphon'' (see also \cite{zucal2024probabilitygraphons} for an equivalent formulation in terms of $P$-variables). The general framework of probability graphons includes both graphons and multiplexons: 

\begin{enumerate}[label=\textrm{(S\arabic*)}]

\item \label{itm:Z} When $\bm{Z} = \{ 0, 1\}$, then a probability-graphon can be equivalently represented in terms of a graphon. This is because any probability-graphon $\bm{W}$ on $\bm{Z}$ has the form
\begin{align}\label{eq:WZ}
\bm{W} (x,y, \cdot) =  (1-W(x, y)) \delta_0 + W(x, y) \delta_1, 
\end{align}
for $x, y \in [0, 1]$ and some graphon $W: [0, 1]^2 \rightarrow [0, 1]$ and vice versa. Moreover, any simple graph $G= (V(G), E(G))$ can be realized as a $\bm Z$-decoration of the complete graph on $|V(G)|$ vertices, where $g((i, j))= \bm 1\{(i,j) \in E(G)\}$, for $1 \leq i < j \leq |V(G)|$. 

\item \label{itm:ZR} More generally, for $r \geq 1$ and $\bm{Z} = \P([r])$, a probability-graphon can be represented as follows: For $x, y \in [0, 1]$, 
\begin{align}\label{eq:WZS}
\bm{W} (x, y, \cdot) = \sum_{S \in \P([r])} \hat{W}_S(x, y) \delta_S, 
\end{align}
where $\hat{W}_S: [0, 1]^2 \rightarrow [0,1]$ are graphons such that $\sum_{S \in \P([r])} \hat{W}_S(x, y) = 1$, for almost every $x, y \in [0, 1]$. Hence, in this case, any probability-graphon $\bm{W}$ can be equivalently represented as a disjointly decomposable multiplexon $\hat{\bm{W}} = (\hat{W}_S)_{S \in \P([r])}$. Moreover, any $r$-multiplex $\bm{G}= (V(\bm{G}), G_1, G_2, \ldots, G_r)$ can be realized as a $\bm Z$-decoration of the complete graph on $|V(\bm{G})|$ vertices, where 
$$g((i, j))= \{ s \in [r]: (i, j) \in E(G_s) \},$$ for $1 \leq i < j \leq |V(\bm G)|$. In other words, each $(i, j)$ is decorated with the set that consists of the layers of $\bm{G}$ where $(i, j)$ is an edge (recall also the notion of multilinks from Remark \ref{remark:mlink}). If we define $G_S^g = (V(\bm G), E(G_S^g)) $ with $E(G_S^g) = \{ 1 \leq i < j \leq |V(\bm G)|: g((i, j)) = S\}$, then $(V(\bm G), (G_S^g)_{S \in \P_0([r])})$ is precisely the disjoint decomposition of $\bm{G}$.

\end{enumerate}

\subsection{Cut Distance for Probability Graphons}

Denote by $$\bm{\wsp}_{\bm Z} := \{ \bm{W} : [0, 1]^2 \rightarrow \mathcal{M}(\bm{Z}) \text{ such that }  \bm{W} \text{ is a probability graphon} \},$$ the collection of probability graphons on $\bm{Z}$. Then given some metric $\mathsf{d}(\cdot, \cdot)$ on $\M(\bm{Z})$ and two probabilty graphons $\bm{U}, \bm{W} \in \bm{\wsp}_{\bm Z}$, \citet{abraham2025probability} defined the cut-norm of $\bm{U} - \bm {W}$ as: 
\begin{align}\label{eq:WST} 
\| \bm{U} - \bm {W}\|_{\square, \mathsf{d}} = \sup_{A, B \subseteq [0, 1]} \mathsf{d} \left( \int_{A \times B}\bm{U}(x, y; \cdot ) \mathrm d x \mathrm d y , \int_{A \times B}\bm{W}(x, y; \cdot ) \mathrm d x \mathrm d y \right) . 
\end{align}  
(A similar metric is also defined in \cite{athreya2023path}). 
Then the cut distance between $\bm{U}$ and $\bm {W}$ is defined as: 
\begin{align}\label{eq:WSTdistance} 
\delta_{\square, \mathsf{d}}  (\bm{U}, \bm {W})= \inf_{\sigma \in \mathcal{F}}\| \bm{U} - \bm {W}^{\sigma}\|_{\square, \mathsf{d}} , 
\end{align}  
where $\bm{W}^{\sigma}(x, y, \cdot) = \bm{W}(\sigma(x), \sigma(y); \cdot )$, for $x, y \in [0, 1]$. Denote by $(\bm{\wsp}_{\bm Z})_{\square, \mathsf{d}}$ the space of probability graphons on $\bm{Z}$ where any pair of probability graphons $\bm{U}, \bm {W} \in \bm{\wsp}_{\bm Z}$ with $\delta_{\square, \mathsf{d}}  (\bm{U}, \bm {W}) = 0$ are considered the same point in the space. Common choices of $\mathsf{d}$ are the L\'evy-Prokhorov ($\mathsf{LP}$) metric, the Kantorovitch-Rubinstein ($\mathsf{KR}$) metric, and the Fortet-Mourier metric, all of which induce the same topology on $(\bm{\wsp}_{\bm Z})_{\square, \mathsf{d}}$ (see \cite[Proposition 1.1]{abraham2025probability}). Moreover, when $\bm{Z} = \P([r])$ as in setting \ref{itm:ZR} above, then \eqref{eq:WSTdistance} induces the same topology as the cut distance $\delta_{\square}$ defined in \eqref{eq:UWdeltalayer}. Specifically, we have the following relation (the proof is given in Appendix \ref{sec:UWlayersmetricpf}):

\begin{proposition}\label{ppn:UWlayersWp}
Suppose $\bm{U} (x, y, \cdot) = \sum_{S \in \P([r])} \hat{U}_S(x, y) \delta_S, $ and $\bm{W} (x, y, \cdot) = \sum_{S \in \P([r])} \hat{W}_S(x, y) \delta_S$ are two probability graphons on $\bm{Z} = \P_0([r])$ as in \eqref{eq:WZS}. Then 
\begin{align}\label{eq:UWlayersWp}
 2 \delta_{\square, \mathsf{LP}} (\bm{U}, \bm {W}) \leq \delta_{\square}  (\hat{\bm{U}}, \hat{\bm {W}}) \leq  3 R \delta_{\square, \mathsf{LP}} (\bm{U}, \bm {W}), 
\end{align} 
where $R= 2^{r}$, $\delta_{\square}$ is as defined in \eqref{eq:UWdeltalayer}, and $\hat{\bm{U}} = (\hat{U}_S)_{S \in \P([r])}$ and $\hat{\bm{W}} = (\hat{W}_S)_{S \in \P([r])}$ are disjointly decomposable multiplexons.  
\end{proposition}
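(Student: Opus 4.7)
The plan is to relate the two cut distances at the level of integrated measures over rectangles, exploiting the fact that $\bm{Z} = \P([r])$ is finite. For any measurable $A, B \subseteq [0,1]$, the disjoint decomposability $\sum_{S \in \P([r])} \hat{U}_S = 1$ (and similarly for $\hat{\bm{W}}$) implies that the integrated measures
\begin{align*}
\mu_{A,B}^{\bm{U}} \defeq \int_{A \times B} \bm{U}(x,y;\cdot)\,\mathrm dx\,\mathrm dy = \sum_{S \in \P([r])} \left(\int_{A \times B} \hat{U}_S(x,y)\,\mathrm dx\,\mathrm dy\right)\delta_S
\end{align*}
and $\mu_{A,B}^{\bm{W}}$ are finite atomic measures on $\P([r])$ with common total mass $|A||B|$. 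Their coordinate differences $\mu_{A,B}^{\bm{U}}(\{S\}) - \mu_{A,B}^{\bm{W}}(\{S\}) = \int_{A \times B} (\hat{U}_S - \hat{W}_S)\,\mathrm dx\,\mathrm dy$ are precisely the quantities whose supremum over $A, B$ defines $\|\hat{U}_S - \hat{W}_S\|_\square$. The whole proof reduces to bounding the Lévy--Prokhorov distance between these finite discrete measures against their coordinate differences in both directions, and then applying the same measure-preserving bijection $\sigma \in \mathcal{F}$ on both sides before taking the infimum; this is legitimate because $\bm{W}^\sigma(x,y;\cdot) = \sum_S \hat{W}_S^\sigma(x,y)\delta_S$, so the correspondence between probability graphons and disjointly decomposable multiplexons is preserved under relabeling.

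For the lower bound $2\delta_{\square,\mathsf{LP}}(\bm{U}, \bm{W}) \leq \delta_\square(\hat{\bm{U}}, \hat{\bm{W}})$, I would use the universal inequality $\mathsf{LP}(\mu, \nu) \leq \mathsf{TV}(\mu, \nu)$. Since $\mu_{A,B}^{\bm{U}}$ and $\mu_{A,B}^{\bm{W}}$ have equal total mass and are supported on $\P([r])$,
\begin{align*}
\mathsf{TV}(\mu_{A,B}^{\bm{U}}, \mu_{A,B}^{\bm{W}}) = \frac{1}{2} \sum_{S \in \P([r])} \left|\int_{A \times B} (\hat{U}_S - \hat{W}_S)\,\mathrm dx\,\mathrm dy\right| \leq \frac{1}{2} \sum_{S \in \P([r])} \|\hat{U}_S - \hat{W}_S\|_\square = \frac{1}{2}\|\hat{\bm{U}} - \hat{\bm{W}}\|_\square.
\end{align*}
Taking the supremum over $A, B$ yields $\|\bm{U} - \bm{W}\|_{\square,\mathsf{LP}} \leq \tfrac{1}{2}\|\hat{\bm{U}} - \hat{\bm{W}}\|_\square$. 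Replacing $\bm{W}$ by $\bm{W}^\sigma$ and taking the infimum over $\sigma \in \mathcal{F}$ delivers the claimed inequality.

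For the upper bound $\delta_\square(\hat{\bm{U}}, \hat{\bm{W}}) \leq 3R\, \delta_{\square,\mathsf{LP}}(\bm{U}, \bm{W})$, I would fix $S_0 \in \P([r])$ and use the singleton $\{S_0\}$ (together, if needed, with its complement) as a test set in the Lévy--Prokhorov definition \eqref{eq:WST}. If $\|\bm{U} - \bm{W}\|_{\square,\mathsf{LP}} \leq \epsilon$, then $\mu_{A,B}^{\bm{U}}(\{S_0\}) \leq \mu_{A,B}^{\bm{W}}(\{S_0\}^\epsilon) + \epsilon$ and vice versa. Because $\P([r])$ is finite and the underlying metric is bounded (so that for $\epsilon$ below a universal threshold depending on the metric, $\{S_0\}^\epsilon$ equals $\{S_0\}$ or contains only elements whose excess mass can be controlled by the analogous inequality applied to neighboring atoms and the mass-conservation relation $\mu_{A,B}^{\bm{U}}(\P([r])) = \mu_{A,B}^{\bm{W}}(\P([r])) = |A||B|$), one obtains $|\int_{A \times B} (\hat{U}_{S_0} - \hat{W}_{S_0})\,\mathrm dx\,\mathrm dy| \leq 3\epsilon$ uniformly in $A, B$. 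Hence $\|\hat{U}_{S_0} - \hat{W}_{S_0}\|_\square \leq 3\|\bm{U} - \bm{W}\|_{\square,\mathsf{LP}}$. Summing over the $R = |\P([r])|$ coordinates and then passing to the infimum over $\sigma \in \mathcal{F}$ produces the factor $3R$.

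The main obstacle will be pinning down the exact constant in the upper bound: the factor $3$ reflects the concrete metric underlying $\mathsf{LP}$ in \cite{abraham2025probability}, specifically how $\epsilon$-fattenings of singletons in $\P([r])$ can spread mass across nearby atoms. Since any bounded metric on a finite set induces the discrete topology, and the Lévy--Prokhorov, Kantorovich--Rubinstein, and Fortet--Mourier metrics are topologically equivalent on $\M(\P([r]))$ (as noted in \cite[Proposition 1.1]{abraham2025probability}), the step reduces to a routine but careful bookkeeping of constants. The only remaining technicality is ensuring that a single bijection $\sigma$ can be used simultaneously in both norms when passing to the infimum, which is immediate from the identity $\bm{W}^\sigma(x,y;\cdot) = \sum_S \hat{W}_S^\sigma(x,y)\delta_S$.
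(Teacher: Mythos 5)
Your lower bound is the paper's lower bound, line for line: bound $\mathsf{LP}$ by $\mathsf{TV}$, compute the total-variation distance of the integrated measures as $\tfrac{1}{2}\sum_S |\int_{A\times B}(\hat U_S - \hat W_S)|$, bound each summand by the layer cut norm, sum, take suprema over $A,B$ and infimum over $\sigma$. No issues there.

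The upper bound is where you diverge, and where the gap is. The paper does \emph{not} argue directly about $\epsilon$-fattenings of singletons. Instead, it observes that $\pm\bm 1_{\{S\}}$ is an admissible test function in $C_{BL}(\P([r]))$, so that $\left|\int_{A\times B}(\hat U_S - \hat W_S)\right|$ is bounded by the Kantorovitch--Rubinstein distance of the integrated measures, and then invokes the general inequality $\mathsf{KR} \leq 3\,\mathsf{LP}$ from \cite[Theorem 3.22]{abraham2025probability}. The constant $3$ comes entirely from that cited theorem. Your proposal instead tries to reprove this relation from scratch via the $\epsilon$-fattening characterization of $\mathsf{LP}$, and the sketch does not close. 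Concretely: (i) you never specify the metric on $\P([r])$ with which $\mathsf{LP}$ is computed, which is exactly what determines when $\{S_0\}^\epsilon = \{S_0\}$ and hence what constant falls out; (ii) your argument in the regime where $\{S_0\}^\epsilon = \{S_0\}$ gives the bound $\left|\mu_{A,B}^{\bm U}(\{S_0\}) - \mu_{A,B}^{\bm W}(\{S_0\})\right| \leq \epsilon$, i.e.\ factor $1$, not $3$, so the factor $3$ would have to be produced entirely in the regime where fattenings spill over to other atoms, but that regime is waved away with ``routine but careful bookkeeping'' and the appeal to mass conservation is never turned into an inequality; (iii) the integrated measures have total mass $|A||B|$ rather than $1$, and the $\mathsf{LP}$-characterization you invoke needs to be stated for such (sub)probability measures in the same normalization used by \cite{abraham2025probability}, which you do not address. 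None of these is fatal in principle --- a direct argument on a finite metric space can certainly be carried through --- but as written, the step that produces the constant $3R$ is asserted rather than proved. If you want to avoid reproving a result that \cite{abraham2025probability} already supplies, the cleanest fix is to do what the paper does: rewrite $\int_{A\times B}(\hat U_S - \hat W_S)$ as $\int_{\P([r])} F\,\mathrm d\!\left(\mu_{A,B}^{\bm U} - \mu_{A,B}^{\bm W}\right)$ with $F = \bm 1_{\{S\}}\in C_{BL}$, bound it by $\mathsf{KR}$, and cite $\mathsf{KR}\leq 3\,\mathsf{LP}$.
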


\begin{remark} 
Proposition \ref{ppn:UWlayersWp} shows that the topologies generated by $\delta_\square$ and $\delta_{\square, \mathsf{LP}}$ are equivalent. Hence, as mentioned in Remark \ref{remark:compactpf}, one can recover the compactness result in Theorem \ref{cumdeccom} by invoking \cite[Theorem 1.3]{abraham2025probability}. 
\end{remark}

\subsection{ Homomorphism Densities for Probability Graphons }
\label{sec:densityprobabilitygraphon}

In the general framework of probability graphons, homomorphism densities are defined in terms of edge-decorated graphs. Towards this, denote by $C_b(\bm Z)$ the collection of all bounded continuous functions from $\bm Z$ to $\mathbb \R$. Given a finite graph
$H = (V(H), E(H))$ a $C_b(\bm Z)$-decoration of $H$ assigns to every edge $e \in E(H)$ a function $g_e \in C_b(\bm Z)$. The decorated graph will be denoted by $H^{\bm{g}}$, where $\bm g = \{g_e\}_{e \in E(H)}$. Then the homomorphism density of the decorated graph $H^{\bm{g}}$ in a probability graphon $\bm{W}$ on $\bm{Z}$ is defined as: 
\begin{align}\label{eq:tHWprobabilitygraphon}
 t(H^{\bm{g}}, \bm{W}) = \int_{[0, 1]^{V(H)}} \prod_{(i, j) \in E(H)} \left( \int_{\bm Z} g_{(i, j)}(z) \bm{W}(x_i, x_i; \mathrm d z) \right) \mathrm dx_1\, \mathrm dx_2\,\dots\, \mathrm dx_{|V(H)|} . 
\end{align} 
This reduces to the notions of homomorphism densities of graphons and multiplexons when $\bm{Z}$ is a finite set and the decorations are defined appropriately (recall \ref{itm:Z} and \ref{itm:ZR}): 

\begin{itemize}

\item Suppose $\bm{Z}= \{0, 1\}$ and $H = (V(H), E(H))$  is a finite simple graph. Then considering the trivial decoration $g_{(i, j)}(z) = \bm 1\{z=1\}$, for $z \in \{0, 1\}$ and $(i, j) \in E(H)$, and recalling the representation in \eqref{eq:WZ}, the expression in \eqref{eq:tHWprobabilitygraphon} reduces to
\begin{align*} 
 t(H^{\bm{g}}, \bm{W}) = \int_{[0, 1]^{V(H)}} \prod_{(i, j) \in E(H)} W(x_i, x_i) \mathrm dx_1\, \mathrm dx_2\,\dots\, \mathrm dx_{|V(H)|} = t(H, W) . 
\end{align*}

\item Next, suppose $\bm{Z} = \P([r])$ and $\bm{H} = (V(\bm{H}), E(H_1), E(H_2), \ldots, E(H_r))$ is an $r$-multiplex. Let $\hat{\bm{H}} = (V(\bm H), (\hat{H}_S)_{S \in \P_0([r])})$ be the disjoint decomposition of $\bm{H}$ and $\widebar{\bm{H}} = (V(\bm H), (\widebar{H}_S)_{S \in \P_0([r])})$ the cumulative decomposition of $\bm{H}$. Note that for each $S \in \P_0([r])$, $\hat H_S = (V(\bm{H}), E(\hat H_S))$ is a simple graph and the collection of edge sets $E(\hat H_S)_{S \in \P_0([r])}$ are disjoint. Hence, for each edge $(i, j) \in E(\hat{\bm{H}}) := \bigcup_{S \in \P_0([r])} E(\hat H_S)$, there is a unique subset $S_{(i, j)} \in \P_0([r])$ such $(i, j) \in E(\hat{H}_{S_{(i, j)}})$. Now, consider the simple graph $\hat H := (V(\bm{H}), E(\hat{\bm{H}}))$ decorated as follows:  For $(i, j) \in E(\hat{\bm{H}})$, 
$$g_{(i, j)}(S) = \bm 1\{ (i, j) \in E(\widebar H_S)\} = \bm 1\{ S \supseteq S_{(i, j)} \},$$  when $S \ne \emptyset$ and $g_{(i, j)}(\emptyset) = 0$. Then, for any probability graphon $\bm{W}$, recalling the representation in \eqref{eq:WZS}, the expression in \eqref{eq:tHWprobabilitygraphon} reduces to
\begin{align}
 t(\hat{H}^{\bm{g}}, \bm{W}) 
 %& = \int_{[0, 1]^{|V(\bm{H})|}} \prod_{(i, j) \in E(\hat{\bm{H}})} \hat{W}_{S_{(i, j)}}(x_i, x_i) \mathrm dx_1\, \mathrm dx_2\,\dots\, \mathrm dx_{|V(\bm{H})|} \nonumber \\  
& =  \int_{[0, 1]^{|V(\bm{H})|}} \prod_{(i, j) \in E(\hat{\bm{H}})} \left( \sum_{ S' \in \P_0([r])} g_{(i, j)}(S') \hat{W}_{S'}(x_i, x_j) \right) \mathrm dx_1\, \mathrm dx_2\,\dots\, \mathrm dx_{|V(\bm{H})|}  \nonumber \\  
& =  \int_{[0, 1]^{|V(\bm{H})|}} \prod_{(i, j) \in E(\hat{\bm{H}})} \left( \sum_{ S' \supseteq S_{(i, j)}} \hat{W}_{S'}(x_i, x_j) \right) \mathrm dx_1\, \mathrm dx_2\,\dots\, \mathrm dx_{|V(\bm{H})|}  \nonumber \\  
& =  \int_{[0, 1]^{|V(\bm{H})|}} \prod_{(i, j) \in E(\hat{\bm{H}})} \widebar{W}_{S_{(i, j)}}(x_i, x_j) \mathrm dx_1\, \mathrm dx_2\,\dots\, \mathrm dx_{|V(\bm{H})|} \label{eq:tHgW}  \\  
& = \int_{[0, 1]^{|V(\bm{H})|}} \prod_{S \in \P_0([r])} \prod_{(i, j) \in E(\hat{H}_S)} \widebar{W}_{S}(x_i, x_i) \mathrm dx_1\, \mathrm dx_2\,\dots\, \mathrm dx_{|V(\bm{H})|} \nonumber \\ 
& = t(\hat{\bm{H}}, \widebar{\bm{W}}) , \nonumber 
\end{align} 
\end{itemize}
where \eqref{eq:tHgW} uses \eqref{eq:dc} and the last equality uses \eqref{eq:tHW}. 

Using the above representations we can recover the counting lemma of multiplexons (recall Lemma \ref{lem:cont}) from the counting for $\wsp$-decorated graphs (\cite[Lemma 10.24]{Lov12}).\footnote{A simple graph is $H=(V(H), E(H))$ is said to be $\wsp$-decorated, if every edge $e \in E(H)$ is assigned a graphon $W_{e} \in \wsp$. } Specifically, given any cumulatively decomposable multiplexon 
$\widebar{\bm{W}} \in \wsp^{R-1}_\blacktriangle$, assign to the edge $(i, j)$ of the graph $\hat H := (V(\bm{H}), E(\hat{\bm{H}}))$ the graphon $\widebar{W}_{S_{(i, j)}}$, 
where $S_{(i, j)}$ is defined above. Hence, the collection $\{\widebar{W}_{S_{(i, j)}}: (i, j) \in E(\hat{\bm{H}})\}$ is a $\wsp$-decoration of $\hat H$. Hence, from \eqref{eq:tHgW} and \cite[Lemma 10.24]{Lov12}, for any two cumulatively decomposable multiplexons $\widebar{\bm{U}}, \widebar{\bm{W}} \in \wsp^{R-1}_\blacktriangle$, we have 
\begin{align}
\left|t(\hat{\bm{H}},\widebar{\bm{U}}) - t(\hat{\bm{H}},\widebar{\bm{W}})\right| & = \left| t(\hat{H}^{\bm{g}}, \bm{U})  -  t(\hat{H}^{\bm{g}}, \bm{W})  \right| \nonumber \\ 
& = \sum_{(i, j) \in E(\hat{H})} \left\| \widebar{U}_{S_{(i, j)}} - \widebar{W}_{S_{(i, j)}} \right\|_\square \nonumber \\ 
& \leq |E(\hat{H})| \left\| \widebar{\bm{U}} - \widebar{\bm{W}} \right\|_\square =  |E(\hat{\bm{H}})| \left\| \widebar{\bm{U}} - \widebar{\bm{W}} \right\|_\square , \nonumber 
\end{align} 
which gives the result in Lemma \ref{lem:cont}.

%\begin{remark}
%Recently, \citet{KunLovSze22} developed a notion of left-convergence for $\bm{Z}$-decorated graphs, relaxing the compactness assumption on $\bm{Z}$ imposed in Lov\'asz and Szegedy \cite{lovasz2010limits}. To describe the limit object they considered $\mathcal B(\bm{Z})$-decorated graphons, where $\mathcal B(\bm{Z})$ is the dual space of $C_b(\bm{Z})$. Here, a $\mathcal B(\bm{Z})$-decorated graphon is a symmetric function $\bm{W}: [0, 1]^2 \rightarrow \mathcal B(\bm{Z})$. When $\bm{Z}=\P([r])$ as in \ref{itm:ZR}, then $C_b(\bm{Z})$ is all functions from $ f: \bm{Z} \rightarrow \mathbb{R}$. Hence, in this case, $C_b(\bm{Z})$ as well as its dual  $\mathcal B(\bm{Z})$ can be identified with  $\R^{R}$. Therefore, a $\mathcal B(\bm{Z})$-decorated graphon in this case is a vector-valued function $\bm{W}(x, y) = ((W_{S}(x, y)_{S \in \P([r])})$, where $W_S: [0, 1]^2 \rightarrow \mathbb R$ is a graphon, for $S \in \P([r])$ and $x, y \in [0, 1]$. The space of $\mathcal B(\bm{Z})$-decorated graphons contains the space disjointly decomposable multiplexons $\wsp^R_{\triangle}$ (recall Definition \ref{definition:WR}),  which captures the limits of multiplexes. 

%\subsection{ Homomorphism Densities for Decorated Graphons }

\subsection{ Weak Isomorphism }
\label{sec:deltaUW}

For probability graphons, \citet{abraham2025probability} defines weak isomorphism as follows (see \cite[Definition 3.16]{abraham2025probability}): Two probability graphons $\bm{U}, \bm{W} \in \wsp_{\bm Z}$ are said to be {\it weakly isomorphic} if there exist measure preserving transformations $\sigma_1,\sigma_2:  [0, 1] \rightarrow [0, 1]$ such that
\[\bm{U}^{\sigma_1}(x,y, \cdot) = \bm{W}^{\sigma_2}(x,y, \cdot) , \] 
for almost every $(x,y) \in [0,1]^{2}$. In \cite[Theorem 3.17]{abraham2025probability} it is shown that $\bm{U}, \bm{W} \in \wsp_{\bm Z}$ are weakly isomorphic if and only if $\delta_{\square, \textsf{d}} (\bm{U},  \bm{W}) = 0$, whenever the distance $\mathrm{d}$ on $\mathcal M(\bm{Z})$ is invariant and smooth (see \cite[Definition 3.10]{abraham2025probability}). Hence, when $\bm Z= \P_0([r])$ as in setting \ref{itm:ZR}, the equivalence of the metrics $\delta_{\square, \textsf{LP}}$ and $\delta_\square$ implies the following result:

\begin{corollary}\label{cor:UW}
Suppose $\widebar{\bm{U}}, \widebar{\bm{W}} \in \wsp^{R-1}_\blacktriangle$ are two cumulatively decomposable multiplexons. Then the following are equivalent: 

\begin{enumerate}

\item[(1)] $t(\hat{\bm{H}}, \widebar{\bm{U}}) = t(\hat{\bm{H}}, \widebar{\bm{W}})$, for all simple multiplex network $\bm{H}$;

\item[(2)] $\delta_{\square}( \widebar{\bm{U}}, \widebar{\bm{W}} ) = 0$; 

\item[(3)] there exist measure preserving transformations $\sigma_1,\sigma_2:  [0, 1] \rightarrow [0, 1]$ such that
$\widebar{\bm{U}}^{\sigma_1}(x,y) = \widebar{\bm{W}}^{\sigma_2}(x,y)$, for almost every $(x,y) \in [0,1]^{2}$. 

\end{enumerate} 

\end{corollary}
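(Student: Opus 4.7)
The plan is to prove the chain $(1) \Leftrightarrow (2) \Leftrightarrow (3)$. The equivalence $(1) \Leftrightarrow (2)$ is already contained in Corollary \ref{cor:deltatUW}, so the only remaining work is $(2) \Leftrightarrow (3)$. For this, I will pass to the probability graphon framework of Section \ref{sec:probabilitygraphons} and invoke \cite[Theorem 3.17]{abraham2025probability}, which establishes the analogous statement for probability graphons.

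The first step is to convert $\widebar{\bm{U}}, \widebar{\bm{W}} \in {\wsp}_{\blacktriangle}^{R-1}$ into their disjoint decompositions $\hat{\bm{U}}, \hat{\bm{W}} \in {\wsp}_{\triangle}^{R}$ via \eqref{eq:cd}, and then into probability graphons $\bm{U}, \bm{W}$ on $\bm{Z} = \P([r])$ via the representation \eqref{eq:WZS}. The observation driving the translation is that all three operations (cumulative-to-disjoint decomposition, embedding into $\bm{\wsp}_{\bm Z}$, and the action of a measure-preserving bijection $\sigma \in \mathcal{F}$) are pointwise in $(x,y)$ and commute with one another. In particular, the disjoint decomposition of $\widebar{\bm{W}}^{\sigma}$ is $\hat{\bm{W}}^\sigma$, and the probability graphon associated with $\hat{\bm{W}}^\sigma$ is $\bm{W}^\sigma$.

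Next, applying the inequalities in \eqref{eq:Wdistance} (valid with $\widebar{\bm{W}}$ replaced by $\widebar{\bm{W}}^\sigma$) and then Proposition \ref{ppn:UWlayersWp}, and finally taking infimum over $\sigma \in \mathcal{F}$, I obtain
\begin{equation*}
\delta_\square(\widebar{\bm{U}}, \widebar{\bm{W}}) = 0 \iff \delta_\square(\hat{\bm{U}}, \hat{\bm{W}}) = 0 \iff \delta_{\square, \mathsf{LP}}(\bm{U}, \bm{W}) = 0.
\end{equation*}
Since the L\'evy-Prokhorov metric is invariant and smooth on $\mathcal{M}(\P([r]))$, \cite[Theorem 3.17]{abraham2025probability} then asserts that the last condition is equivalent to the existence of measure-preserving maps $\sigma_1, \sigma_2 : [0,1] \to [0,1]$ with $\bm{U}^{\sigma_1}(x,y,\cdot) = \bm{W}^{\sigma_2}(x,y,\cdot)$ for almost every $(x,y) \in [0,1]^2$.

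Finally, I unwind the probability-graphon equality to recover condition $(3)$. Since $\bm{U}^{\sigma_1}(x, y, \cdot) = \sum_{S \in \P([r])} \hat{U}_S^{\sigma_1}(x,y)\, \delta_S$ and likewise for $\bm{W}^{\sigma_2}$, equality of these discrete measures forces $\hat{U}_S^{\sigma_1}(x,y) = \hat{W}_S^{\sigma_2}(x,y)$ for every $S \in \P([r])$ and almost every $(x,y)$. Applying the pointwise bijection \eqref{eq:dc} then gives $\widebar{U}_S^{\sigma_1}(x,y) = \widebar{W}_S^{\sigma_2}(x,y)$ for every $S \in \P_0([r])$ and almost every $(x,y)$, which is precisely $(3)$; the reverse implication $(3) \Rightarrow (2)$ follows by running the same translation in the opposite direction, since any equality $\widebar{\bm{U}}^{\sigma_1} = \widebar{\bm{W}}^{\sigma_2}$ transfers into $\bm{U}^{\sigma_1} = \bm{W}^{\sigma_2}$ and thus into $\delta_{\square,\mathsf{LP}}(\bm{U},\bm{W}) = 0$. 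The main obstacle is not conceptual but notational: one must carefully verify that measure-preserving transformations commute with the bijection between cumulative and disjoint decompositions and with the embedding into probability graphons, so that the ``up to constants'' inequalities in \eqref{eq:Wdistance} and Proposition \ref{ppn:UWlayersWp} suffice to transfer the vanishing of $\delta_\square$ between all three representations.
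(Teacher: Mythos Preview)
Your proof is correct and follows essentially the same approach as the paper: the equivalence $(1)\Leftrightarrow(2)$ is Corollary~\ref{cor:deltatUW}, and $(2)\Leftrightarrow(3)$ is obtained by passing to probability graphons and invoking \cite[Theorem~3.17]{abraham2025probability} together with the metric equivalence in Proposition~\ref{ppn:UWlayersWp}. Your version is a bit more explicit in using \eqref{eq:Wdistance} to bridge the cumulative and disjoint representations (since Proposition~\ref{ppn:UWlayersWp} is stated for $\hat{\bm U},\hat{\bm W}$), a step the paper leaves implicit.
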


\begin{proof} 
The equivalence of (1) and (2) has already been shown in Corollary \ref{cor:deltatUW}. The equivalence of (2) and (3) follows from \cite[Theorem 3.17]{abraham2025probability} and the equivalence of the metrics $\delta_{\square, \textsf{LP}}$ and $\delta_\square$ established in Proposition \ref{ppn:UWlayersWp}.  
\end{proof}

\section{ Concluding Remarks and Future Directions }
\label{sec:directions}

In this paper, we develop the limit theory for sequences of dense multiplex networks. Specifically, we establish the equivalence between left-convergence and convergence in terms of the cut distance, present results on sampling convergence, identify useful decompositions of multiplexes and homomorphism densities, and discuss various examples. By focusing on multiplex networks, we have organized and simplified existing general results, with the aim of making them more accessible for applications in statistics and network science. The following is a partial list of possible future directions for further extending this theory.

%The widespread occurrence of multiplex networks in real-world applications further motivates the development of the multiplex/multiplexon framework, which still presents numerous open directions for research.

\begin{itemize}

\item {\it Right convergence of multiplexes}: In this paper, our focus has been on left-convergence of multiplexes, that is, the convergence of homomorphism densities of fixed small multiplexes into increasingly large multiplexes. In graph limit theory, another complementary notion is right convergence, which deals with the convergence of homomorphism densities of large graphs into fixed small weighted graphs. These two notions of convergence are equivalent in the graphon setting, where they also correspond to the convergence of free energies and ground states of statistical physics models, such as Ising and Potts models \cite{graph_limits_II}. Right convergence also has interesting implications on parameter estimation in Ising models \cite{BM}. Recently, Zucal \cite{zucal2024probability} developed a right convergence theory for probability graphons, which, in particular, includes multiplex networks. It would be interesting to further explore how this framework connects to the convergence of free energies/ground states, and parameter estimation in statistical physics models on multiplex networks (see \cite{jang2015ashkin,krawiecki2018ferromagnetic,gomez2015layer} among others).

\item {\it Large deviations of random multiplexes}: Among the major applications of the graphon framework is the theory of large deviations for random graphs. This framework allows for the precise computation of tail probabilities of homomorphism densities and provides a way to understand the limiting structure of graphs conditioned on rare events \cite{CV,chatterjee2017large,chatterjee_pd,lubetzky2015replica}. Very recently, \cite{probabilitygraphons} developed the analogous theory of large deviations for probability graphons. In forthcoming work \cite{multiplex}, we study large deviations for random multiplexes, using the framework of multiplexons outlined in this paper.

\item {\it Fluctuations and moment-based network estimation}:  Another direction would be to derive fluctuations (limiting distributions) of homomorphism densities in a random multiplex. This has been established for graphon-based random graphs in \cite{bickel2011method,graphondistribution,bhattacharya2021fluctuations,feray2020graphons}. Obtaining analogous results for random multiplexes would enable extending moment-based statistical network estimation techniques \cite{zhang2022edgeworth,borgs2010moments,bickel2011method,graphondistribution} to the multiplex setting. In a related direction, very recently, \citet{triangles2025networks} obtained a multivariate Poisson approximation for triads in multislice Erd\H{o}s-R\'enyi networks.

\item {\it Multiplexes with given degree sequence}: 
%Large graphs are sometimes studied through their degree sequences, such as regular graphs or power-law graphs. 
Random graphs that are uniformly chosen with a given degree sequence are well-studied. 
In the dense regime, under mild conditions, such graphs converge to a limiting graphon \cite{degreesequence}. This result is also closely connected to the statistical properties of the $\beta$-model, a natural exponential family with the degree sequence as the sufficient statistic. For multiplex networks, various notions of degree can be considered, as discussed in Section \ref{sec:degreeclusteringcoefficient} (see also \cite[Section 6.2]{bianconi2018multilayer}). Fixing a notion of degree sequence, one can investigate the asymptotic structure of multiplex networks chosen uniformly with a given degree sequence, as well as statistical properties of the multiplex analogue of the $\beta$-model (see \cite[Section 10.3.5]{bianconi2018multilayer}).

\item {\it Limits of sparse multiplex networks}: The framework of graphons, and consequently that of multiplexons, can only capture the limits of dense networks. Several approaches to capturing limits of sparse graphs have been proposed in the literature. This includes notions of local weak convergence \cite{benjamini2001recurrence,localweakconvergence,BolRio09,hatami2014limits}), $L^p$-convergence \cite{graph_limits_sparse_I,graph_limits_sparse_II},  $s$-convergence or graphops \cite{kunszenti2019measures,backhausz2022action}, and graphexes \cite{borgs2021sparse,borgs2020identifiability,borgs2019sampling,caron2017sparse}, among several others. Extending these ideas to sparse multiplex networks is another future direction. 

\end{itemize}

\small{\noindent{\textbf{Acknowledgment:} B. B. Bhattacharya was supported by NSF CAREER grant DMS 2046393 and a Sloan Research Fellowship. }

\small 

\bibliographystyle{abbrvnat} 
\bibliography{references,bibliography}

\normalsize 

\appendix

\section{Proof of Proposition \ref{lem:compact}}
\label{sec:compactpf}

For any partition $\Pi = \{A_1,\dots,A_{N}\}$ of $[0,1]$ into $N \geq 1$ parts and any graphon $W \in \wsp$, define 
\begin{align}\label{eq:Wpartition}
W^{\Pi}(x,y) \defeq \sum_{1 \leq i,j\leq N} \bm 1\{ (x,y) \in A_i\times A_j \} \frac{1}{|A_i||A_j|}\int_{A_i\times A_j} W(x, y)\,\mathrm dx\, \mathrm dy. 
\end{align}
%This is the function obtained by averaging the graphon $W$ on the blocks $A_i \times A_j$, for $1 \leq i , j \leq N$. 
Now, fix any sequence $\{\bm W_n\}_{n \geq 1}$, with $\bm W_n  = ((W_n)_1, \ldots, (W_n)_\intW) \in \wsp^\intW$. Then for each $s \in [\intW]$ and $n, \kappa \geq 1$, as in the proof of \cite[Theorem 9.23]{Lov12}, select a partition $\Pi_{n, \kappa}\lyr{s}$ of $[0,1]$ such that the following hold: 

\begin{enumerate}[label=\textbf{(\arabic*)}]

\item \label{itm:propertyI} $\| (W_n)_{s} - (W_n)_{s}^{\Pi_{n,\kappa}\lyr{s}} \|_\square \leq \frac{1}{\kappa}$, where $(W_n)_{s}^{\Pi_{n,\kappa}\lyr{s}}$ is defined as in \eqref{eq:Wpartition} with $\Pi= \Pi_{n,\kappa}\lyr{s}$, 

\item  \label{itm:propertyII} the partition $\Pi_{n,\kappa+1}\lyr{s}$ refines $\Pi_{n,\kappa}\lyr{s}$,

\item \label{itm:propertyIII} $|\Pi_{n,\kappa}\lyr{s}| = c_{\kappa}$, where $c_{\kappa}$ depends only on $\kappa$.
\end{enumerate} 

Such a partition exists by \cite[Lemma 9.15]{Lov12}. Define 
\begin{align}\label{eq:WnK}
\bm{W}_{n,\kappa}  := \left( (W_n)_{1}^{\Pi_{n,\kappa}\lyr{1}} , (W_n)_{2}^{\Pi_{n,\kappa}\lyr{2}} , \ldots, (W_n)_{L}^{\Pi_{n,\kappa}\lyr\intW} \right) .
\end{align}
Also, let $$\Pi_{n,\kappa} = \left\{\bigcap_{s=1}^{\intW} A\lyr{s}: A\lyr{s} \in \Pi_{n,\kappa}\lyr{s}, \text{ for } s \in [\intW]\right\}.$$ Then $|\Pi_{n,\kappa}| \leq c_{\kappa}^{\intW}$ and $\Pi_{n,\kappa+1}$ is a refinement of $\Pi_{n,\kappa}$. Furthermore, since $\Pi_{n,\kappa}$ refines $\Pi_{n,\kappa}\lyr{s}$, for each $s \in [\intW]$, we may regard 
$$\left\{ (W_n)_{1}^{\Pi_{n,\kappa}\lyr{1}} , (W_n)_{2}^{\Pi_{n,\kappa}\lyr{2}} , \ldots, (W_n)_{L}^{\Pi_{n,\kappa}\lyr\intW} \right \}$$ as block graphons restricted to the partition $\Pi_{n,\kappa}$. Now, for each $n \geq 1$ consider a measure preserving bijection $\psi_n:[0, 1] \rightarrow [0,1]$ that maps each set in $\Pi_{n,\kappa}$ to an interval. 
Then %$\bm{W}^{\psi_n}_n = ((W_n)_1^{\psi_n}, \ldots, (W_n)_\intW^{\psi_n})$ and 
from \eqref{eq:WnK} it follows that $$\bm{W}^{\psi_n}_{n,\kappa} = \left( \left((W_n)_1^{\psi_n}\right)^{\Lambda\lyr{1}_{n,\kappa}} , \left((W_n)_2^{\psi_n}\right)^{\Lambda\lyr{2}_{n,\kappa}}, \ldots, \left((W_n)_\intW^{\psi_n} \right)^{\Lambda\lyr\intW_{n,\kappa}} \right), $$ 
where, for $s \in [\intW]$, $$\Lambda\lyr{s}_{n,\kappa} = \{\psi_n^{-1}(A) : A \in \Pi_{n,\kappa}\lyr{s} \}$$ 
is the pushforward partition of $\Pi_{n,\kappa}\lyr{s}$ under the map $\psi_n$. Note that each element of $\Pi_{n,\kappa}\lyr{s}$ is a disjoint union of intervals. 
%Then  $$\Lambda_{n,\kappa} \defeq \{\psi_n^{-1}(A): A \in \Pi_{n,\kappa}\}$$ is a partition of $[0, 1]$ composed entirely of intervals that contains most $c_{\kappa}^{\intW}$ intervals. 
Also, recalling \eqref{eq:UWdeltalayer}, 
\begin{align}\label{eq:WKpartition}
\delta_{\square}({\bm{W}}_{n}, {\bm{W}}^{\psi_{n}}_{n,\kappa}) = \inf_{\sigma \in \mathcal{F}} \| \bm{W}_{n} - (\bm{W}^{\psi_{n}}_{n,\kappa})^{\sigma} \|_\square 
& = \inf_{\tau \in \mathcal{F}} \| \bm{W}_{n} - \bm{W}^{\tau}_{n,\kappa} \|_\square \nonumber \\ 
& \leq \| \bm{W}_{n} - \bm{W}_{n,\kappa} \|_\square \nonumber \\ 
& \leq \sum_{s=1}^{\intW} \| (W_{n})_s - (W_n)_s^{\Pi\lyr{s}_{n,\kappa}} \|_\square \leq \frac{L}{\kappa} , 
\end{align}
where the last inequality uses property \ref{itm:propertyI}. 

Now, by \cite[Claim 9.24]{Lov12}, there exists a subsequence $\{n_m\}_{m \geq 1}$ such that for each $\kappa \geq 1$ and for all $s \in [\intW]$, 
$$\lim_{m \rightarrow \infty}\left((W_{n_m})_s^{\psi_{n_m}}\right)^{\Lambda\lyr{s}_{n,\kappa}} (x, y) = U^{(\kappa)}_{s}(x, y) ,$$ 
for almost every $x, y \in [0,1]$ and some graphon $U^{(\kappa)}_{s} \in \wsp$, which is a step function with at most $c_{\kappa}$ steps (where each step is a finite union of intervals). 
Hence, setting $\bm{U}^{(\kappa)} \defeq ( U^{(\kappa)}_1, U^{(\kappa)}_2, \ldots, U^{(\kappa)}_\intW)$ and by the bounded convergence theorem,\footnote{For any multiplexon $\bm W = (W_1, W_2, \ldots, W_L) \in \wsp^{\intW}$, we define the $L_1$-norm as $\|\bm{W}\|_1 := \sum_{s=1}^\intW \|W_s\|_1=  \sum_{s=1}^\intW \int_{[0,1]^2} |W_{s}(x, y)| \mathrm d x \mathrm dy$. } 
\begin{align}\label{eq:WnKU}
\lim_{m \rightarrow \infty} \|\bm{W}^{\psi_{n_m}}_{n_m,\kappa} - \bm{U}^{(\kappa)}\|_1 %= \lim_{m \rightarrow \infty}  \sum_{s=1}^s \| \left((W_{n_m})_s^{\psi_{n_m}}\right)^{\Lambda\lyr{s}_{n,\kappa}} - U^{(\kappa)}_{s} \|_ 1 
= 0, 
\end{align}  
for each $\kappa \geq 1$.

Next, let $\Pi_{\kappa}^{(s)}$ be the partition of $[0, 1]$ into the steps of $U^{(\kappa)}_{s}$, for $s \in [\intW]$. From the limiting analogue of property \ref{itm:propertyII}, we have 
\begin{align}\label{eq:partitionU}
U^{(\kappa)}_{s} = (U^{(\kappa+1)}_{s})^{\Pi_{\kappa}^{(s)}} , 
\end{align}
for $s \in [\intW]$. Now, suppose $(X, X') \sim \mathrm{Unif}([0,1]^{2})$. Then, \eqref{eq:partitionU} implies that the sequence $\{U^{(\kappa)}_{s}(X, X')\}_{\kappa \geq 1}$ is a bounded martingale, for each $s \in [\intW]$. Hence, by the martingale convergence theorem (see \cite[Theorem A.12]{Lov12}), the sequence $\{U^{(\kappa)}_{s}(X, X')\}_{\kappa \geq 1}$ converges almost surely to some random variable $U_{s}$, for each $s \in [\intW]$. Then setting $\bm U= (U_1, U_2, \ldots, U_L)$ and applying the bounded convergence theorem gives, 
\begin{align}\label{eq:UK}
\lim_{\kappa \rightarrow \infty}\| \bm{U}^{(\kappa)} - \bm{U} \|_1 = 0. 
\end{align}

Combining \eqref{eq:WKpartition}, \eqref{eq:WnKU}, and \eqref{eq:UK} gives, 
\begin{align*}
\delta_{\square}({\bm{W}}_{n_m}, {\bm{U}}) &  \leq \delta_{\square}({\bm{W}}_{n_m}, {\bm{W}}^{\psi_{n_m}}_{n_m,\kappa}) + \delta_{\square}( {\bm{W}}^{\psi_{n_m}}_{n_m,\kappa} , {\bm{U}}^{(\kappa)} ) + \delta_{\square} ( {\bm{U}}^{(\kappa)} , {\bm{U}} ) \nonumber \\ 
& \leq \frac{L}{\kappa} + \| \bm{W}^{\psi_{n_m}}_{n_m,\kappa} - \bm{U}^{(\kappa)} \|_1 + \|\bm{U}^{(\kappa)} - \bm{U} \|_1  \rightarrow 0,  
\end{align*}
as $m \to\infty$ followed by  $\kappa \to\infty$. This establishes the compactness of $\wsp_\square^\intW$. \hfill $\Box$ 

\section{ Proof of Proposition \ref{ppn:complete} } 
\label{sec:randomconvergence}

Fix $S \in \P_0([r])$. Then, from \cite[Corollary 3.4]{graph_limits_I}, for each $L \geq 2$, there is an equipartition $\Pi_{S}$ of $[0, 1]$ into $L$ measurable sets such that  
\begin{align*}
 \| \widebar{W}_S - \widebar{W}_S^{\Pi_S} \|_{\square}  \leq \frac{4}{\sqrt{\log L}}. 
\end{align*}  
Let $\Pi'$ be the common refinement of the partitions $\{ \Pi_{S} : S \in \P_0([r]) \}$. Note that $|\Pi'| \leq L^{R-1}$ and each set in $\Pi'$ has Lebsegue measure at most $\frac{1}{L}$. Also, 
\begin{align*}
\|  \widebar{W}_S - \widebar{W}_S^{\Pi'}  \|_{\square} \leq  \| \widebar{W}_S - \widebar{W}_S^{\Pi_S} \|_{\square}  \leq \frac{4}{\sqrt{\log L}}.
\end{align*}
Then, by \cite[Lemma 9.15]{Lov12}, there is a equipartition $\Pi$ of $[0, 1]$ into $L^R$ measurable sets that refines the partition $\Pi'$ such that 
\begin{align}\label{eq:WrefineL}
\|  \widebar{W}_S^{\Pi} - \widebar{W}_S \|_{\square} \leq  2 \|  \widebar{W}_S^{\Pi'} - \widebar{W}_S \|_{\square} + \frac{1}{L}  \leq \frac{9}{\sqrt{\log L}}.
\end{align} 
Therefore, denoting $\widebar{\bm{W}}^{\Pi} = (\widebar{W}_S^{\Pi} )_{ S \in \P_0([r]) }$ and adding both sides of \eqref{eq:WrefineL} over $S \in \P_0([r])$ gives, 
\begin{align}\label{eq:WL}
\| \widebar{\bm{W}}^{\Pi} - \widebar{\bm{W}} \|_\square \leq \frac{9 R}{\sqrt{\log L}} . 
\end{align}

Next, let $\bm{\eta} = \{\eta_1, \eta_2, \ldots, \eta_n\}$ be the i.i.d. $\mathrm{Unif}([0,1])$ random variables used to generate the random multiplex $\bm{G}(n, \widebar{\bm{W}})$. Denote by $\widebar{W}_{S}^{\Pi}[\bm{\eta}]$ the $n \times n$ matrix with $(i, j)$-th element $\widebar{W}_{S}^{\Pi}(\eta_i, \eta_j)$, for $i, j \in [n]$, and let $(\widebar{W}_{S}^{\Pi})^{(n)}$ be the associated graphon. Similarly, define $\widebar{W}_{S}[\bm{\eta}]$ to be the $n \times n$ matrix with $(i, j)$-th element 
$\widebar{W}_{S}(\eta_i, \eta_j)$, for $i, j \in [n]$, and let $(\widebar{W}_{S})^{(n)}$ be the associated graphon. 
%\textcolor{red}{Denote by $Q_{ij} := Q_i \times Q_j$, with $Q_1, Q_2, \ldots, Q_n$ as defined in \eqref{eq:interval} for $i, j \in [n]$. Then, for $S \in \P_0([r])$, define  \[(\widebar{W}_{S}^{\Pi})^{(n)} (x,y) \defeq \sum_{1 \leq i,j \leq n} \widebar{W}^{\Pi}_{S}(\eta_i,\eta_j) \bm 1\{(x,y) \in Q_{ij} \}, \] and \[\widebar{W}_{S}^{(n)}(x,y) = \sum_{1 \leq i,j \leq n} \widebar{W}_S(\eta_i,\eta_j) \bm 1 \{(x,y) \in Q_{ij}\} . \] } 
%so that conditioned on $\{U^{n}_i\}_{i=1}^n$, $\bm{G}'_n \sim \PP_{n,\bm{\widebar{W}}_{n,\infty}}(\cdot|\{U^{n}_i\}_{i=1}^n)$. 
Then by \cite[Lemma 9.15]{Lov12} and the proof of \cite[Lemma 10.16]{Lov12}, 
$$\E [ | \|(\widebar{W}_{S}^{\Pi})^{(n)} - \widebar{W}_{S}^{(n)} \|_\square - \|\widebar{W}^{\Pi}_{S} - \widebar{W}_S\|_\square | ] \leq 10 n^{-\frac{1}{4}} ,$$
for all $S \in \P_0([r])$ and $n$ large enough. Thus, setting $(\widebar{\bm{W}}^{\Pi})^{(n)} = ((\widebar{W}_{S}^{\Pi})^{(n)})_{S \in \P_0([r])}$, $\widebar{\bm{W}}^{(n)} = (\widebar{W}_{S}^{(n)})_{S \in \P_0([r])},$ and $L= \lceil n^{\frac{1}{4 R}} \rceil$ in \eqref{eq:WL} gives, 
\begin{align}\label{eq:WnWn}
\E [ \|(\widebar{\bm{W}}^{\Pi})^{(n)} - \widebar{\bm{W}}^{(n)} \|_\square] & \leq  \E [ | \|(\widebar{\bm{W}}^{\Pi})^{(n)} - \widebar{\bm{W}}^{(n)} \|_\square - \|\widebar{\bm{W}}^{\Pi} - \widebar{\bm{W}}\|_\square | ] + \| \widebar{\bm{W}}^{\Pi} - \widebar{\bm{W}} \|_\square \nonumber \\ 
& \leq \frac{19 R^{\frac{3}{2}}}{\sqrt{\log n}} , 
\end{align} 
for $n$ large enough. 

Next, consider the cumulative decomposition of the empirical graphon: $\bm{\widebar{W}}^{\bm{G}_{n}} = (\widebar{W}^{\bm{G}_{n}}_S)_{S \in \mathcal{P}_0([r])}$. Note that $\widebar{W}^{\bm{G}_{n}}_S$ is the empirical graphon corresponding to a $\widebar{W}_S$-random graph. Hence, by equation (10.9) of \cite{Lov12} (which is a consequence of \cite[Lemma 10.11]{Lov12}), 
\begin{align}\label{eq:WGnWn}
\E [ \| \bm{\widebar{W}}^{\bm{G}_{n}} - \widebar{\bm{W}}^{(n)} \|_{\square} ] = \sum_{S\in \P_0([r])} \E \left[  \| \widebar{W}^{\bm{G}_{n}}_S - \widebar{W}^{(n)}_{S} \|_{\square} \right] \leq \frac{11 R}{\sqrt{n} } .  
\end{align}

Finally, we consider the difference between $(\widebar{\bm{W}}^{\Pi})^{(n)}$ and $\widebar{\bm{W}}^{\Pi}$. For each $S \in \P_0([r])$, the functions $\widebar{W}^{\Pi}_{S}$ and $(\widebar{W}^{\Pi}_{S})^{(n)}$ are both block functions with equal number of steps with the same values in the corresponding steps. Denote the $s$-th step in $\Pi$ by $V_s$, for $1 \leq s \leq |\Pi|$. Since $\Pi$ is an equipartition, the measure of $V_s$, which is the $s$-th step in $\widebar{W}^{\Pi}_{S}$, is $\frac{1}{L^{R}}$. On the other hand, the measure of the $s$-th step in $(\widebar{W}_S^{\Pi})^{(n)}$ is $\frac{|V_s \cap \{\eta_1, \eta_2, \ldots, \eta_n\}|}{n}$. Then 
$$\delta_{\square}((\widebar{\bm{W}}^{\Pi})^{(n)} , \widebar{\bm{W}}^{\Pi}) = \inf_{\sigma \in \mathcal{F}} \sum_{S \in \P_0([r])} \| (\widebar{W}_S^{\Pi})^{(n)} - \widebar{W}_S^{\Pi} \|_{\square} \leq R \sum_{s=1}^{|\Pi|} |\Delta_s| , $$
where $\Delta_s := \frac{|V_s \cap \{\eta_1, \eta_2, \ldots, \eta_n\}|}{n} - \frac{1}{|\Pi|} $. Note that $\E[\Delta_1] = 0$ and $\E[\Delta_1^2] = \frac{1}{n |\Pi|}(1-\frac{1}{|\Pi|})$. Hence,  
\begin{align}\label{eq:WnW}
\E [ \delta_{\square}((\widebar{\bm{W}}^{\Pi})^{(n)} , \widebar{\bm{W}}^{\Pi}) ]  \leq R \sum_{s=1}^{|\Pi|} \E [|\Delta_s|] = R |\Pi| \E[ | \Delta_1 | ] \leq R |\Pi| \left( \E[\Delta_1^2] \right)^{\frac{1}{2}} & = R \sqrt{\frac{|\Pi|-1}{n}} \nonumber \\ 
& \leq R n^{-\frac{3}{8}} ,  
\end{align}  
since $|\Pi| = L^R \leq n^{\frac{1}{4}}$. 
%
%is a block function, for each $S\in \P_0([r])$. Hence, from the proof of \cite[Lemma 4.9.4]{zhao2023graph} it follows that 
%\begin{align}\label{eq:WKnWK}
%\delta_{\square}(\widebar{\bm{W}}^{\Pi_K})^{(n)} , \widebar{\bm{W}}^{\Pi_K}) \leq \inf_{\sigma \in \mathcal{F}} \| (\widebar{\bm{W}}^{\Pi_K})^{(n)} - (\widebar{\bm{W}}^{\Pi_K})^\sigma \|_1 \to 0. 
%\end{align}
%
Combining \eqref{eq:WL}, \eqref{eq:WnWn}, \eqref{eq:WGnWn}, and \eqref{eq:WnW}, the result in \eqref{eq:WGnW} we have 
$$\E[\delta_\square(\widebar{\bm{W}}^{\bm{G_n}}, \widebar{\bm W})] \leq \frac{ 30 R^{\frac{3}{2}}}{\sqrt{\log n}} , $$  
for $n$ large enough. Now, consider the function $f(G(n, \widebar{\bm{W}})) = \frac{n}{R} \delta_\square(\widebar{\bm{W}}^{\bm{G_n}}, \widebar{\bm W})$. This is a 1-Lipschitz function, hence by Azuma's inequality (similar to \cite[Theorem 10.3]{Lov12}), 
\begin{align*} 
\mathbb{P}\left(\delta_\square(\widebar{\bm{W}}^{\bm{G_n}}, \widebar{\bm W}) > \frac{32 R^{\frac{3}{2}}}{\sqrt{\log n}}\right) & \leq \mathbb{P}\left(\delta_\square(\widebar{\bm{W}}^{\bm{G_n}}, \widebar{\bm W})] -\E[\delta_\square(\widebar{\bm{W}}^{\bm{G_n}}, \widebar{\bm W})] > \frac{2 R^{\frac{3}{2}}}{\sqrt{\log n}}\right) \nonumber \\ 
& = \mathbb{P}\left( f(G(n, \widebar{\bm{W}})) - \E[f(G(n, \widebar{\bm{W}}))] > \frac{2 n \sqrt{R} }{\sqrt{\log n}}\right) \leq e^{- \frac{ 2 R n}{ \log n}} , \nonumber 
\end{align*} 
which completes the proof of Proposition \ref{ppn:complete}. \hfill $\Box$

\section{ Proof of Proposition \ref{ppn:UWlayersWp} } 
\label{sec:UWlayersmetricpf}

Since the L\'evy-Prokhorov metric is bounded above by the Total Variation ($\mathsf{TV}$) distance, we have, for any $A, B \subseteq [0, 1]$, 
\begin{align} 
\mathsf{LP} \left( \int_{A \times B}\bm{U}(x, y; \cdot ) , \int_{A \times B}\bm{W}(x, y; \cdot ) \right)  & \leq \mathsf{TV} \left( \int_{A \times B}\bm{U}(x, y; \cdot ) , \int_{A \times B}\bm{W}(x, y; \cdot ) \right) \nonumber \\ 
& = \frac{1}{2} \sum_{S \in \P([r])} \left|  \int_{A \times B} \left( \hat U_S(x, y) - \hat W_S(x, y) \right) \mathrm d x \mathrm d y \right| \tag*{(recall \eqref{eq:tHWprobabilitygraphon})} \nonumber \\ 
& \leq \frac{1}{2} \sum_{S \in \P([r])} \sup_{A, B \subseteq [0, 1]} \left|  \int_{A \times B} \left( \hat U_S(x, y) - \hat W_S(x, y) \right) \mathrm d x \mathrm d y \right| \nonumber \\ 
& \leq \frac{1}{2} \sum_{S \in \P([r])} \left\|  \hat U_S - \hat W_S \right\|_{\square} \nonumber \\ 
%\label{eq:WpUWS} & \leq \sum_{S \in \P_0([r])} \left\|  \hat U_S - \hat W_S \right\|_{\square} \\ 
\label{eq:WpUWSlayers} & =  \frac{1}{2} \left\|  \hat{\bm{U}} - \hat{\bm W} \right\|_{\square} . 
\end{align}
%where \eqref{eq:WpUWS} follows by noting that $$\left\|  \hat U_{\emptyset} - \hat W_{\emptyset} \right\|_{\square} = \left\|  \sum_{S \in \P_0([r])} \left( \hat U_S - \hat W_S \right) \right\|_{\square} \leq \sum_{S \in \P_0([r])}  \left\| \hat U_S - \hat W_S \right\|_{\square} . $$
Hence, taking supremum over $A,  B \in [0, 1]$ and infimum over $\sigma \in \mathcal{F}$ on both sides of \eqref{eq:WpUWSlayers} gives, $2 \delta_{\square, \mathsf{LP}}(\bm{U} , \bm {W}) \leq \delta_{\square} (\hat{\bm{U}}, \hat{\bm W} )$. This proves the lower bound on $\delta_{\square}$ in \eqref{eq:UWlayersWp}.

For the upper bound, denote by $C_{BL}(\bm{Z})$ the collection of all functions $F: \P([r]) \rightarrow \mathbb{R}$ which are 1-Lipschitz and $\max_{S \in \P([r])} |F(S)| \leq 1$. Then, recalling the definition of the Kantorovitch-Rubinstein (\textsf{KR}) metric, for any $S \in \P([r])$,  
\begin{align}\label{eq:ABUWS}
\left|\int_{A \times B} \left( \hat U_{S}(x, y) - \hat W_{S}(x, y) \right) \mathrm d x \mathrm d y  \right| & \leq  \sup_{F \in C_{BL}(\bm{Z})} \int_{A \times B} \left( \sum_{S' \in \P([r])} F(S') (\hat U_{S'}(x, y) - \hat W_{S'}(x, y))  \right) \mathrm d x \mathrm d y \nonumber \\ 
& = \mathsf{KR} \left( \int_{A \times B}\bm{U}(x, y; \cdot ) , \int_{A \times B}\bm{W}(x, y; \cdot ) \right) \nonumber \\ 
& \leq 3 \mathsf{LP} \left( \int_{A \times B}\bm{U}(x, y; \cdot ) , \int_{A \times B}\bm{W}(x, y; \cdot ) \right) , 
\end{align}
where the last step uses \cite[Theorem 3.22]{abraham2025probability}. Now, taking supremum over $A,  B \in [0, 1]$ and adding over $S \in \P([r])$ on both sides of \eqref{eq:ABUWS} gives, 
\begin{align}\label{eq:ABUWS}
\left\| \hat{\bm{U}} - \hat{\bm{W}} \right\|_{\square} & \leq 3 R \left\| \bm{U} - \bm{W} \right\|_{\square, \mathsf{LP}} . 
\end{align} 
Hence, taking the infimum over $\sigma \in \mathcal{F}$ on both sides yields the upper bound on $\delta_{\square}$ in \eqref{eq:UWlayersWp}. 
\hfill $\Box$

\end{document}